\definecolor{OliveGreen}{rgb}{0, 0.26, 0.15}
\definecolor{dark red}{rgb}{0.5, 0.05, 0.13}
\definecolor{dark red 2}{rgb}{0.81, 0.09, 0.13}
\definecolor{dark blue}{rgb}{0, 0.18, 0.39}
\definecolor{dark blue 2}{rgb}{0.03, 0.27, 0.49}
\definecolor{dark green2}{rgb}{0.07, 0.53, 0.03}
\definecolor{dark green}{rgb}{0, 0.44, 0}
\newtheorem{thm}{Theorem}[section]
\newtheorem{lem}[thm]{Lemma}
\newtheorem{prop}[thm]{Proposition}
\newtheorem{definition}[thm]{Definition}
\newtheorem{rem}[thm]{Remark}
\newcommand\LL{{\mathbb L}}
\newcommand\EE{{\mathbb E}}
\newcommand\PP{{\mathbb P}}
\newcommand\NN{{\mathbb N}}
\newcommand\RR{{\mathbb R}}
\newcommand\ZZ{{\mathbb Z}}
\newcommand{\mc}[1]{{\mathcal #1}}
\newcommand{\bb}[1]{{\mathbb #1}}
\definecolor{lightcarminepink}{rgb}{0.9, 0.4, 0.38}
\definecolor{lightcoral}{rgb}{0.94, 0.5, 0.5}
\definecolor{lightcornflowerblue}{rgb}{0.6, 0.81, 0.93}
\definecolor{lightcyan}{rgb}{0.88, 1.0, 1.0}
\definecolor{lavenderblush}{rgb}{1.0, 0.94, 0.96}
\definecolor{deeppeach}{rgb}{1.0, 0.8, 0.64}
\definecolor{darkchampagne}{rgb}{0.76, 0.7, 0.5}
\definecolor{desertsand}{rgb}{0.93, 0.79, 0.69}
\definecolor{classicrose}{rgb}{0.98, 0.8, 0.91}
\definecolor{myyeallow}{rgb}{0.98, 0.91, 0.71}
\definecolor{carolinablue}{rgb}{0.6, 0.73, 0.89}
\definecolor{antiquewhite}{rgb}{0.98, 0.92, 0.84}
\definecolor{mycolor}{rgb}{0.98, 0.91, 0.71}
\definecolor{camel}{rgb}{0.76, 0.6, 0.42}
\definecolor{brightcerulean}{rgb}{0.11, 0.67, 0.84}
\definecolor{cerulean}{rgb}{0.0, 0.48, 0.65}
\definecolor{Gray}{rgb}{0.5, 0.5, 0.5}
\numberwithin{equation}{section}
\title[{Boundary driven super-diffusive symmetric exclusion}
]{Hydrodynamic limit for a boundary driven \\super-diffusive symmetric exclusion}
\author{C\'edric Bernardin, Pedro Cardoso, Patr\'icia   Gon\c calves, Stefano Scotta}
\newcommand{\Addresses}{{
		\bigskip
		\footnotesize
		
		C\'edric Bernardin, \textsc{Universit\'e C\^ote d'Azur, CNRS, LJAD, Parc Valrose, 06108 NICE Cedex 02, France\\
		\& Interdisciplinary Scientific Center Poncelet (CNRS IRL 2615), 119002 Moscow, Russia }\par\nopagebreak
		\textit{E-mail address}: \texttt{cbernard@unice.fr}
		
				\medskip
		
		Pedro Cardoso, \textsc{\noindent Center for Mathematical Analysis,  Geometry and Dynamical Systems \\
			Instituto Superior T\'ecnico, Universidade de Lisboa\\
			Av. Rovisco Pais, no. 1, 1049-001 Lisboa, Portugal}\par\nopagebreak
		\textit{E-mail address}: \texttt{pedro.gondim@tecnico.ulisboa.pt}

		\medskip
		
		Patr\'icia   Gon\c calves, \textsc{\noindent Center for Mathematical Analysis,  Geometry and Dynamical Systems \\
Instituto Superior T\'ecnico, Universidade de Lisboa\\
Av. Rovisco Pais, no. 1, 1049-001 Lisboa, Portugal}\par\nopagebreak
		\textit{E-mail address}: \texttt{pgoncalves@tecnico.ulisboa.pt}
		
		\medskip
		
		Stefano Scotta, \textsc{\noindent Center for Mathematical Analysis,  Geometry and Dynamical Systems \\
Instituto Superior T\'ecnico, Universidade de Lisboa\\
Av. Rovisco Pais, no. 1, 1049-001 Lisboa, Portugal}\par\nopagebreak
		\textit{E-mail address}: \texttt{stefano.scotta@tecnico.ulisboa.pt}
		
}}
\begin{document}
\subjclass[2010]{60K35, 35R11, 35S15}
\maketitle{}

\begin{abstract}
{We study the hydrodynamic limit for  symmetric exclusion processes with  heavy-tailed long jumps and in contact with infinitely extended reservoirs. We show how the corresponding hydrodynamic equations are affected by the parameters defining the model. The hydrodynamic equations are characterized by a class of super-diffusive operators that are given by the regional fractional Laplacian with some additional reaction terms and various boundary conditions. Here we answer to all the questions left open in  \cite{BJGO2}  and we prove a conjecture stated in that same article.}
\end{abstract}

\tableofcontents

\section{Introduction}
\label{introduction}
The understanding of the macroscopic behavior of a physical system, from the microscopic description of its molecules, is the main goal of statistical mechanics. Stochastic interacting particle systems (IPS), introduced in the mathematics community by \cite{Spitzer}, have been intensively used to obtain, rigorously, the macroscopic laws which rule the space-time evolution of the conserved quantities of a system. These macroscopic laws take the form of partial differential equations (PDEs) whose nature depends on some features of the underlying IPS. The PDEs associated with an IPS are derived thanks to a scaling parameter $N \to \infty$, which connects the macroscopic space, where the solutions of the PDE are defined,  with the microscopic space, a discrete space where the particles of the IPS evolve. The exclusion process is the most studied IPS in the mathematical literature. Its dynamics consists of a large system of particles evolving on the lattice $\mathbb Z$ like independent continuous-time random walks, but with the rule that any jump, which would result in the occupation of a site by more than one particle, is suppressed. This ``exclusion'' constraint characterizes the interactions between particles and gives its name to this Markov process.  The number of particles is preserved and the goal is then to understand how does the particles' density evolves in space and time. The transition probability of the underlying random walks is denoted by $p:\mathbb Z\times \mathbb Z\to [0,1]$.\\

When the transition probability $p$ is translation invariant, i.e. $p(x,y)=p(y-x)$, with zero mean and finite variance, the density is governed by the heat equation with a constant diffusion coefficient equal to the variance of $p$. Different boundary conditions (Dirichlet, Robin, Neumann) can be imposed by putting the system in contact with some stochastic reservoirs. The nature of the boundary conditions depends on the reservoirs' dynamics  and their intensity via some scaling parameter. If $p$ has heavy tails, in particular, if the variance is infinite, then the density evolves super-diffusively according to a fractional diffusion equation \cite{MJ}, which is a non-local PDE. Putting the system in contact with reservoirs gives then rise to a variety of boundary conditions which are much more difficult to describe than in the diffusive case. This is due to the fact that a boundary condition usually has a local nature while the PDE, in this case, does not have.\\     

The exclusion process we analyze in this work was first introduced in \cite{BGJO, BJGO2, BJ}. We recall here how it is defined. Let $N >1$ and call ``bulk'' the lattice $\Lambda_N=\{1,\dots,N-1\}$. To the right and to the left of the bulk we add infinitely many stochastic reservoirs, which means that, according to some rules that we will explain in detail later, the sets $\{x\le 0\; ;\; x \in \ZZ\}$ and $\{x\ge N\; ;\; x \in \ZZ\}$ interact with the particles in the bulk, creating or annihilating them. Particles in the bulk move according to an exclusion process with long jumps: this means that particles can perform jumps of any length but they can not jump to a site where there is already a particle (exclusion rule). This kind of dynamics is determined by a symmetric transition probability  
\begin{equation}
  p: z \in {\mathbb Z} \to p(z)=\frac{c_{\gamma}}{|z|^{\gamma+1}} \mathbb{1}_{\{z\neq 0\}}, 
\end{equation}
depending only on the length of the jump and on a parameter $\gamma>0$. Hence the constant $c_\gamma$ is a normalizing constant equal to
\begin{equation}
\label{probability}
c_\gamma = \sum_{z \ne 0}\frac{1}{|z|^{\gamma+1}}.
\end{equation}
Associated with this transition probability we denote its second moment by $\sigma^2:=\sum_{z \in \mathbb{Z}}z^2p(z)\in (0, \infty]$ and we also introduce the quantity $m:=\sum_{z \geq 0}z p(z)$ belonging to $(0,\infty]$. The strength of the reservoirs is regulated by a factor $\kappa/N^{\theta}$ depending on two parameters $\kappa>0$ and $\theta \in \RR$. We will see how the macroscopic behavior of the system changes drastically according to the values of $\theta$. We call the reservoirs ``slow'' (or ``weak'') if $\theta\geq 0$ and ``fast'' (or ``strong'') if $\theta<0$. Moreover, we introduce two more parameters $\alpha \in (0,1)$ and $\beta \in (0,1)$ regulating, respectively, the density  of the left reservoir and of the right reservoir. The precise description of the evolution of the process  is given in Section \ref{model}. Figure \ref{fig1}  gives an illustration of the dynamics just described.

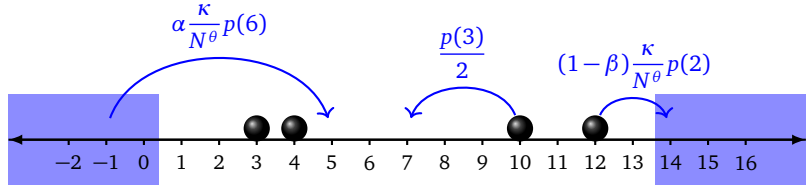
\begin{figure}[htb!]
	
	\begin{center}
		\begin{tikzpicture}[thick]

		\draw[shift={(-5.01,-0.15)}, color=black] (0pt,0pt) -- (0pt,0pt) node[below]{\dots};
		\draw[shift={(5.01,-0.15)}, color=black] (0pt,0pt) -- (0pt,0pt) node[below]{\dots};
		
		\fill [color=blue!45] (-5.3,-0.6) rectangle (-3.3,0.6);
		\fill [color=blue!45] (5.3,-0.6) rectangle (3.3,0.6);
		
		\draw[-latex] (-5.3,0) -- (5.3,0) ;
		\draw[latex-] (-5.3,0) -- (5.3,0) ;
		\foreach \x in {-4.5,-4,-3.5,...,4.5}
		\pgfmathsetmacro\result{\x*2+7}
		\draw[shift={(\x,0)},color=black] (0pt,0pt) -- (0pt,-2pt) node[below]{\scriptsize \pgfmathprintnumber{\result}};
		
		\node[ball color=black, shape=circle, minimum size=0.3cm] (B) at (-1.5,0.15) {};
		
		\node[ball color=black, shape=circle, minimum size=0.3cm] (C) at (1.5,0.15) {};
		
		\node[ball color=black, shape=circle, minimum size=0.3cm] (D) at (2.5,0.15) {};
		
		\node[ball color=black, shape=circle, minimum size=0.3cm] (E) at (-2,0.15) {};
		
		\node[draw=none] (S) at (3.5,0.15) {};
		\node[draw=none] (R) at (0,0.15) {};
		\node[draw=none] (L) at (-4,0.15) {};
		\node[draw=none] (M) at (-1,0.15) {};

		\path [<-] (S) edge[bend right =70, color=blue]node[above] {\footnotesize $(1-\beta)\dfrac{\kappa}{N^{\theta}}p(2)$}(D);
		\path [->] (C) edge[bend right =70, color=blue]node[above] {\footnotesize $\dfrac{p(3)}{2}$}(R);			
		\path [<-] (M) edge[bend right =70, color=blue]node[above] {\footnotesize $\alpha\dfrac{\kappa}{N^{\theta}}p(6)$}(L);

		\end{tikzpicture}
		\caption{Example of the dynamics of the model with $N=14$ and the  configuration $\eta=(0,0,1,1,0,0,0,0,0,1,0,1,0)$; the sites in the region colored in blue act as reservoirs.}	
		\label{fig1}
	\end{center}	
\end{figure}

In \cite{Adriana} the exclusion process evolving on $\Lambda_N$, with nearest-neighbour jumps ($p(1)=p(-1)=1/2$) and only one reservoir at each endpoint of the bulk is considered and depending on the value of $\theta$, three different hydrodynamic equations are derived: the heat equation, either with Dirichlet ($0\leq \theta< 1$), Robin ($\theta=1$) or Neumann ($\theta>1$)  boundary conditions. The hydrodynamic behavior of the exclusion process with long jumps given by a symmetric $p$ was studied in \cite{BGJO} when $p$ has finite variance ($\gamma>2$)  for all the regimes of $\theta$.  There, it is shown that the hydrodynamic equations involve the standard Laplacian operator and in particular, they are very similar to the ones found in \cite{Adriana}, where the reservoirs are present only at $\{0\}$ and at $\{N\}$. In {\cite{BGJO, BJGO2}
} when the reservoirs are strong enough ($\theta < 0$ in \cite{BGJO} and $\theta < 2 - \gamma$ in \cite{BJGO2}),  the hydrodynamic equation is simply a reaction equation where the diffusive operator is not present since the dynamics caused by the reservoirs are influencing the system much more than the interactions between particles inside $\Lambda_N$.

In \cite{BJGO2} the authors treat the case when $p$ has infinite variance and {{$m$ is finite}} ($1<\gamma<2$) and the reservoirs are quite strong, i.e.  $\theta\leq 0$.  As in the equilibrium setting of \cite{MJ}, in this regime, the standard Laplacian operator is replaced by the fractional Laplacian, but since in \cite{BJGO2} the process evolves in a macroscopic finite domain, the fractional operator on $\mathbb R$ is replaced by the \textit{regional fractional Laplacian}, which is defined in Subsection \ref{sec:notation}, with some specific boundary conditions. Also there, it is possible to see that when the reservoirs are very strong ($\theta<0$) the fractional operator disappears and the PDE involved is a simple reaction equation. All these results are reviewed in the lecture notes \cite{patricianote}. Partial results in the case of {\textit{asymmetric}} heavy tailed $p$ with infinite mean and without boundary conditions have been obtained in \cite{Seth1} (see also \cite{BGS16}). \\

The aim of this work is to complete the analysis of this model for all the values of the parameters involved, namely, the regimes in which the variance is infinite and which were not treated in \cite{BJGO2}. In \cite{BJGO2} the authors proved that for $\gamma \in(1,2)$ and $\theta<0$,  the hydrodynamic equation is a reaction equation, while for $\theta=0$, the hydrodynamic equation is a ``regional fractional reaction-diffusion equation'' with Dirichlet boundary conditions. The other regimes ($\theta>0$ and $1<\gamma<2$; and $0 < \gamma \leq 1$) were left open. Here we complete the scenario of the hydrodynamic limits for this process for all the values of $\theta$ and $0<\gamma<2$, apart one critical point where we could not prove uniqueness of the weak solution. In particular, in \cite{BJGO2}, it was conjectured that the hydrodynamic equation, in a regime where $\theta >0$ sufficiently small and $\gamma\in(1,2)$, is still a regional fractional diffusion equation with Dirichlet boundary conditions, by looking at the convergence as $\kappa \to 0$ of the solution of the hydrodynamic equation in the case $\theta=0$. In this article, we prove this conjecture and identify precisely its domain of validity, which is $0<\theta<\gamma-1$. On the way, we propose some weak formulation of fractional diffusion equations with various boundary conditions which seem to be new in the PDE's literature. We believe that this work could also be of interest to understand the non-equilibrium properties of systems of oscillators in contact with Langevin baths at their boundaries (see e.g. \cite{KBSD, KOR}).    \\

The hard part of the proof of the conjecture formulated in \cite{BJGO2} is to derive the Dirichlet boundary conditions from the particle system, i.e. to show, for example, when $\gamma \in (1,2)$ and $0< \theta <\gamma -1$, that the empirical density in a box of size $\varepsilon N$ around site $1$ is close to $\alpha$ in the limit $N\to \infty$ and then $\varepsilon \to 0$. This is obtained via a combination of two replacement lemmas{\footnote{Roughly speaking, a replacement lemma consists to control the error when we replace the empirical density in a box of size $k$ around the site $x$ by the empirical density in a box of size $\ell$ around the site $y$ or by a given fixed deterministic density. }}: the first one localized at boundary points with a box of order one; the second one following from a (so-called) one-block and a two-blocks estimates, which allows replacing the occupation variables at a site by averages in a microscopic box of size $\epsilon N$ around that site.  In \cite{BJGO2} the authors manage to avoid a proof of this claim by imposing the boundary condition at the PDE level in some indirect way. This argument fails when $\theta>0$. The derivation of the Dirichlet boundary condition we obtain in this paper, valid even for $\theta=0$, is explained in Section \ref{sec:fixing} and differs drastically from the method used in \cite{BJGO2}. It relies on the result that we present in Section \ref{MPL} and on a renormalization procedure. In the first step of this renormalization procedure we show that we can replace the occupation number at site $1$ by its average in a box of length $\ell_0$ for a certain value of $\ell_0$, which is, unfortunately, not big enough to conclude the proof. Then, we use a multi-scale argument (reminiscent of \cite{G,GJ}) which has the following role: we increase the size of $\ell_0$ by doubling it until we reach the needed size $\left\lfloor \varepsilon N \right\rfloor$ of the box. Finally, we show that the occupation number at site $1$ can be replaced by $\alpha$ in a proper regime of $\theta$.  The error that we get by increasing the size of the box from $\ell_0$ to $\left\lfloor \varepsilon N \right\rfloor$ is estimated in Section \ref{MPL}.  In particular, it permits us to estimate the change of energy of the system when we change the position of particles. This result  (Lemma \ref{posMPL}) is technically  challenging because we need to make an effective transport of mass in the system and  since particles can perform long jumps, there are several ways of  transporting  particles: either with nearest-neighbor jumps, one big jump or all the other possible paths with several jumps of different size. In fact, our proof works because we first localize the box where we do an effective transport of mass, by showing that we can make an average on  paths with only two jumps but with all the possible sizes of the jump, and then we show that this mechanism of transporting particles can, in fact, propagate to a microscopic box of size $\epsilon N$, as desired.\\

Another difficulty we faced in the remaining regimes was to derive the convergence of the discrete operator that appears from the random underlying dynamics to the regional fractional Laplacian. In \cite{BJGO2}, the authors prove a uniform convergence on smooth test functions with compact support. In this work, we show that this convergence holds in the $L^1$ sense for any smooth test function (even if not compactly supported) so that at the hydrodynamics level we may consider test functions that do not have compact support and therefore, see the boundary conditions. This result is proved in Subsection \ref{sec:convergence_disc_cont_oper}.

Our results are summarized in Figure \ref{fig:mesh1} and the proper statement is given below in Theorem \ref{theo:hydro_limit}.\\

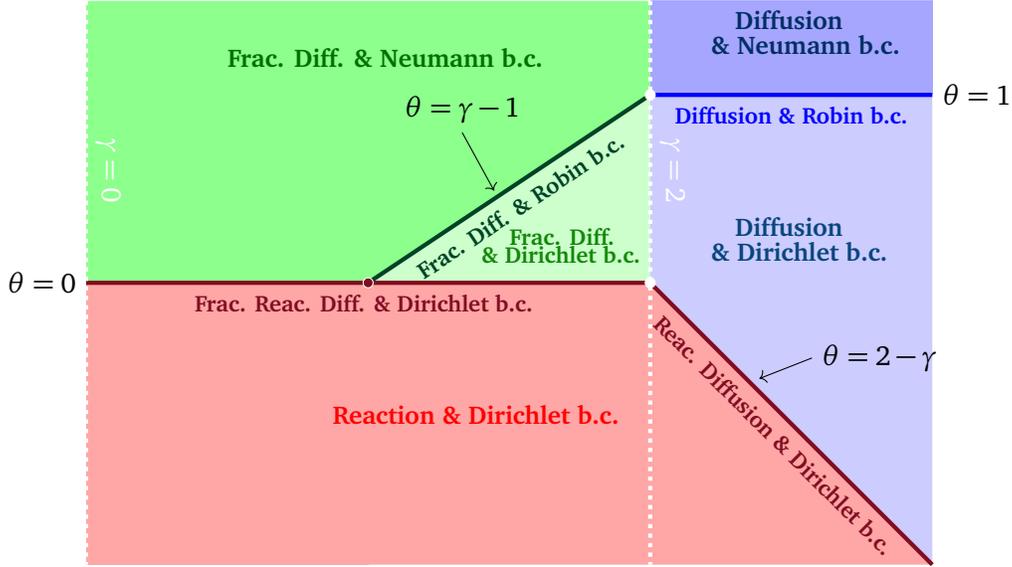
\begin{figure}[htb!]
\centering
	\begin{tikzpicture}[scale=0.25]
	\fill[color=red!35] (-25,-5) rectangle (-10,-20);
	\fill[color=red!35] (-10,-5) -- (5,-5) -- (20,-20) -- (-10,-20)-- cycle;
	\fill[color=green!20] (-10,-5) -- (5,-5) -- (5,5) -- cycle;
	\fill[color=green!45] (-25,-5) -- (-10,-5) -- (5,5) -- (5,10) -- (-25,10) -- cycle;
	\fill[color=blue!20] (5,-5) -- (20,-20) -- (20,5) -- (5,5) -- cycle;
	\fill[color=blue!35] (5,5) rectangle (20,10);
	\draw[-,=latex,dark red,ultra thick] (-25,-5) -- (5, -5) node[midway, below, sloped] {\footnotesize{{\textbf{\textcolor{dark red}{Frac. Reac. Diff. \& Dirichlet b.c. }}}}};
	\draw[dotted, ultra thick, white] (5,10) -- (5,-20);
	\draw[dotted, ultra thick, white] (-25,10) -- (-25,-20);
	\draw[-,=latex,dark red,ultra thick] (5,-5) -- (20, -20) node[midway, below, sloped] {\footnotesize{\textbf{{\textcolor{dark red}{Reac. Diffusion  \& Dirichlet b.c. }}}}};
	\draw[-,=latex,blue,ultra thick] (5,5) -- (20, 5) node[midway, sloped, below] {\footnotesize{{\textbf{\textcolor{blue}{Diffusion \& Robin b.c.}}}}};
	\draw[-,=latex,OliveGreen ,ultra thick] (-10,-5) -- (5, 5) node[midway, sloped, below] {\footnotesize{{\textbf{\footnotesize{\textcolor{OliveGreen}{Frac. Diff. \& Robin b.c.}}}}}};
	\node[right, black] at (9,9) {\textbf{\small{\textcolor{dark blue}{Diffusion}}}};
	\node[right, black] at (7.7,7.7) {\textbf{\small{\textcolor{dark blue}{\& Neumann b.c.}}}};
	\node[right, white] at (9,-2) {\textbf{\small{\textcolor{dark blue 2}{Diffusion}}}} ;
	\node[right, white] at (7.7,-3.3) {\textbf{\small{\textcolor{dark blue 2}{\& Dirichlet b.c.}}}} ;
	\node[right, white] at (-18,7) {\textbf{\small{\textcolor{dark green}{Frac. Diff. \& Neumann b.c.}}}} ;
	\node[right, white] at (-3,-2.5) {\textbf{\footnotesize{\textcolor{dark green2}{Frac. Diff. }}}} ;
	\node[right, white] at (-4.5,-3.5) {\textbf{\footnotesize{\textcolor{dark green2}{\& Dirichlet b.c. }}}} ;
	\node[right, white] at (-12.4,-12) {\textbf{\small{\textcolor{red}{Reaction \& Dirichlet b.c.}}}} ;
	\node[rotate=270, above] at (5,1) {\textcolor{white}{$\gamma = 2$}};
	\node[rotate=270, above] at (-25,1) {\textcolor{white}{$\gamma = 0$}};
	\node[right] at (20,5) {\textcolor{black}{$\theta=1$}};
	\node[left] at (-25,-5) {\textcolor{black}{$\theta=0$}};
	\draw[white,fill=white] (5,5) circle (1.5ex);
	\draw[white,fill=white] (5,-5) circle (1.5ex);
	\draw[white,fill=dark red] (-10,-5) circle (1.5ex);
	
	\draw[<-,black] (10.8, -10.1) -- (13.6,-9) node[right] {\textcolor{black}{$\theta=2-\gamma$}};
	\draw[<-,black] (-3.3, -0.1) -- (-5,3) node[above] {\textcolor{black}{$\theta=\gamma-1$}};
	
	
	\end{tikzpicture}
	\caption{Hydrodynamic behavior depending on the values of  $\theta$ (vertical axis) and $\gamma$ (horizontal axis). The case $\gamma >2$ is treated in \cite{BGJO}. The case $\gamma \in (1,2)$ and $\theta \leq 0$ is treated in \cite{BJGO2}. In this work we analyse all the other regimes, apart from the critical case $\gamma=2$,  which was solved in \cite{GS}.}
	\label{fig:mesh1}
\end{figure}

The paper is organized as follows: in Section 2, we define precisely the model and we present the hydrodynamic equations we obtained and the definition of weak solutions that we use. Moreover, in the same section, we state the main result of this paper. We also comment on the stationary profiles associated with the hydrodynamic equations. In order to derive these stationary profiles from the microscopic dynamics, we advise that it is possible to follow the strategies presented in \cite{tsu} and \cite{BJ}. At the end of Section 2, we present also some heuristic results on possible variations of the model under consideration. Section 3 contains the proof of the convergence of the empirical measure to a weak solution of the PDEs introduced in Section 2. In Section 4 we prove that the weak solutions of the hydrodynamic equations we define are unique, which completes the proof of the main result. We note however, that we are not able to prove uniqueness of weak solutions in the critical case corresponding to the hydrodynamic equation when $\theta=0$ and $\gamma=1$ and, for that reason, we cannot completely characterize the limit in this particular point. Section 5 is devoted to the proof of some technical lemmas that we need for the proofs of the results.

\section{Statement of results}

\subsection{The model}\label{model}
The Markov process we consider is analogous to the one introduced in \cite{BGJO} and \cite{BJGO2}, so we will use the same notation. 

The dynamics of the process we are interested in is determined by the interactions between particles inside the set, defined for $N\geq 2$ by $\Lambda_N:=\{1,\cdots, N-1\}$ called ``bulk''. The evolution of the configuration of particles is described by a function of time $\eta_{\cdot}$, taking values in the space $\Omega_N:=\{0,1\}^{\Lambda_N}$ of functions which associate to any point of $\Lambda_N$ either the value $1$ or $0$. Thus, if we fix some horizon time $T>0$, at time $t \in [0,T]$, the configuration $\eta_t$ is fully described by the values of $\eta_t(x)$ for $x \in \Lambda_N$: if $\eta_t(x)=1$ it means that there is a particle at site $x$ at time $t$, otherwise if $\eta_t(x)=0$ it means that the site $x$ is empty at time $t$.

The dynamics of this process is explained in detail in \cite{BGJO, BJGO2}. 
We recall it here briefly. It is easier to understand it separating bulk and reservoirs dynamics:
\begin{itemize}
    \item \textbf{bulk dynamics:} each pair of sites $(x,y) \in \Lambda_N\times \Lambda_N$ carries a Poisson process of intensity one. When there is an occurrence in this process we exchange the value of $\eta(x)$ and $\eta(y)$ with probability $p(y-x)/2$;\\
    \item \textbf{reservoirs dynamics}: let us start describing the dynamics associated to the left reservoirs. Every pair of points $(x,y) \in \{x\in \ZZ: x\leq 0\}\times \Lambda_N$ carries a Poisson process of intensity one. When there is an occurrence in this process we exchange $\eta(y)$ with $1-\eta(y)$ with rate $\alpha\tfrac{\kappa}{N^\theta}p(y-x)$ if $\eta(y)=0$ and with rate $(1-\alpha)\tfrac{\kappa}{N^\theta}p(x,y)$ if $\eta(y)=1$. The right reservoirs act in an analogous way: every pair of points $(x,y) \in \{x\in \ZZ: x\geq N\}\times \Lambda_N$ carries a Poisson process of intensity one. When there is an occurrence in this process we exchange $\eta(y)$ with $1-\eta(y)$ with rate $\beta\tfrac{\kappa}{N^\theta}p(y-x)$ if $\eta(y)=0$ and with rate $(1-\beta)\tfrac{\kappa}{N^\theta}p(y-x)$ if $\eta(y)=1$.
\end{itemize}

The infinitesimal generator of the process considered, which depends on some parameter $\alpha, \beta\in (0,1)$, $\kappa>0$ and   $\theta \in \mathbb R$, is defined as 
\begin{equation}
	\label{Generator}
	L_{N} = L_{N}^{0}+ \kappa N^{-\theta} L_{N}^{l}+\kappa N^{-\theta} L_{N}^{r},
\end{equation}
and acts on functions $f:\Omega_N \to \RR$ as
\begin{equation}\label{generators}
	\begin{split}
		&(L^0_N f)(\eta) =\cfrac{1}{2} \, \sum_{x,y \in \Lambda_N} p(x-y) [ f(\sigma^{x,y}\eta) -f(\eta)],\\
		&(L_N^{l} f)(\eta) =\sum_{\substack{x \in \Lambda_N\\ y \le 0}} p(x-y)c_{x}(\eta;\alpha) [f(\sigma^x\eta) - f(\eta)],\\
		&(L_N^{r} f)(\eta)= \sum_{\substack{x \in \Lambda_N \\ y \ge N}} p(x-y) c_{x}(\eta;\beta)  [f(\sigma^x\eta) - f(\eta)]
	\end{split}
\end{equation}
where 
\begin{equation*}
	(\sigma^{x,y}\eta)(z) = 
	\begin{cases}
		\eta(z),& \textrm{if}\;\; z \ne x,y,\\
		\eta(y),& \textrm{if}\;\; z=x,\\
		\eta(x),& \textrm{if}\;\; z=y
	\end{cases}
	, \quad (\sigma^x\eta)(z)= 
	\begin{cases}
		\;\; \eta(z), &\textrm{if}\;\; z \ne x,\\
		1-\eta(x),& \textrm{if}\;\; z=x,
	\end{cases}
\end{equation*}
and  for  $\delta \in (0,1)$ and $x\in \Lambda_{N}$, we define
\begin{equation}
\label{rate_c}
	c_{x} (\eta;\delta) :=\left[ \eta(x) \left(1-\delta \right) + (1-\eta(x))\delta\right].
\end{equation}

We will consider the Markov process in the time scale $t\Theta(N)$, where $\Theta(N)$ is defined in \eqref{timescale}. In order to have a lighter notation we will denote $\eta_t^N:=\eta_{t \Theta(N)}$, so that the process $\eta_{\cdot}^N$ has as infinitesimal generator $\Theta(N)L_N$.

\subsection{Topological setting and fractional operators in bounded domains}\label{sec:notation}
In this subsection, we introduce the notation, the operators and the spaces of functions that we will use in the rest of this work.

First, let us fix some notation. For $h:[0,1]\to [0,\infty)$ a Borel function we will consider frequently the Hilbert space $L^2([0,1], h(u)du)$ that we denote simply by $L^2_h([0,1])$. The inner product and the norm associated to this space will be denoted respectively by $\langle \cdot, \cdot \rangle_h$ and $||\cdot||_h$. When $h=1$, we will work with the standard $L^2([0,1],du)$ space, and in the norm and the inner product,  we will omit the index $h=1$. For any interval $I \subseteq \RR$, we denote $C^k(I)$ (resp. $C_c^k (I)$)  the space of continuous real-valued functions (resp. with compact support included in $I$) with the first $k$-th derivatives being continuous as well. Moreover, for $T>0$, we will say that a function $H \in C^{m,n}([0,T]\times I)$ if $H(\cdot,x)\in C^m([0,T])$ and $H(t,\cdot) \in C^n(I)$ for any $x \in I$ and $t \in [0,T]$. Analogously $H \in C_c^{m,n}([0,T]\times I)$ if $H \in C^{m,n} ([0,T]\times I)$ and  $H(t,\cdot) \in C_c^n(I)$ for any $t \in [0,T]$. If one (or more) superscript is equal to $\infty$ it means that the function considered is smooth in the respective variable. We will use equivalently the notation $H_t(u)$ and $H(t,u)$, so the subscripts have not to be confused with partial derivatives. We will denote the derivatives of a function $H \in C^{m,n}([0,T]\times I)$, by $\partial_t H$ if the derivative is in the time variable and $\partial_u H$ if the derivative is in the space variable.

We will write hereinafter $f(u) \lesssim g(u)$ if there exists a constant $C$ independent of $u$ such that $f(u) \le C g(u)$ for every $u$; moreover, we will write $f(u) = {O} (g(u) )$ if the condition $|f (u) | \lesssim |g(u) |$ is satisfied. \\

We recall now the definition of the fractional operators and the spaces that we will use.

The fractional Laplacian $(-\Delta)^{\gamma/2}$ with exponent $\gamma/2$, for $\gamma \in (0,2)$, is the operator acting on functions $f:\RR\rightarrow\RR$ such that
\begin{equation}
\label{eq:integ1}
\int_{-\infty}^{\infty} \cfrac{|f(u)|}{(1 +|u|)^{1+\gamma}} du < \infty
\end{equation}
as
\begin{equation}
-(-\Delta)^{\gamma/2} f (u) = c_\gamma  \lim_{\epsilon \to 0} \int_{-\infty}^{\infty} {\bb 1}_{|u-v| \ge \epsilon}\cfrac{f(v) -f(u)}{|u-v|^{1+\gamma}} dv,
\end{equation}
for any $u \in \RR$, if the previous limit exists and where $c_{\gamma}$ is defined in \eqref{probability}. The interested reader may have a look at \cite{Poz} for more details.
In a  finite domain we introduce the regional fractional Laplacian $\mathbb L$ on the interval $I\subset \RR$ which acts on functions $f:I \rightarrow \RR$ such that
\begin{equation*}
\int_{I} \cfrac{|f(u)|}{(1 +|u|)^{1+\gamma}} du < \infty
\end{equation*}
as
\begin{equation}
\label{definition}
({\bb L} f)(u) = c_\gamma  \lim_{\epsilon \to 0} \int_I {\bb 1}_{|u-v| \ge \epsilon} \cfrac{f(v) -f(u)}{|u-v|^{1+\gamma}} dv,
\end{equation}
for any $u \in I$ provided the limit exists. In this work we will always consider the case in which $I$ is the interval $(0,1)$ and in this particular case $\mathbb L f$ is well defined, for example, if $f \in C^2([0,1])$.\\

We  introduce the semi inner-product $\langle\cdot,\cdot\rangle_{\gamma/2}$, and  corresponding semi-norm $\|\cdot \|_{\gamma/2}:= \langle \cdot, \cdot \rangle_{\gamma/2}$, defined by
\begin{equation}
\langle f, g \rangle_{\gamma/2} =  \cfrac{c_{\gamma}}{2} \iint_{[0,1]^2} \cfrac{(f(u) -f(v)) (g(u) -g(v))}{|u-v|^{1+\gamma}} du dv,
\end{equation}  
where $f,g:(0,1) \rightarrow \RR$ are functions such that $\| f\|_{\gamma/2}<\infty$ and $\|g\|_{\gamma/2}<\infty$.\\

We have the following integration by parts formula (Theorem 3.3) in \cite{guan}:


\begin{prop}[Theorem 3.3 \cite{guan}]
\label{prop:ip-guan}
If $f \in C^2([0,1])$, then $\bb Lf \in L^1 ([0,1], du)$ for every $\gamma \in (0,2)$. Moreover, if $g$ is bounded and $\|g\|_{\gamma/2}<\infty$, then 
\begin{equation} \label{intpart}
\langle {-\bb L} f, g \rangle = \langle f, g \rangle_{\gamma/2}, \forall \gamma \in (0,2).
\end{equation}
\end{prop}

In Corollary 7.6 of \cite{guan} the following generalization is also established for $\gamma \in (1,2)$.

\begin{prop} [\cite{guan}]
\label{prop:guan0}
Let $\gamma \in (1,2)$ and $g\in C^2([0,1])$. Let $f:[0,1] \to \mathbb R$ be such that $u\to f(u)u^{1-\gamma}$ and $u \to f(u) (1-u)^{1-\gamma}$ are in $C^2([0,1])$. Then,
\begin{equation}
\langle - \mathbb L g ,f \rangle = \chi_\gamma \left[g(1) D^\gamma f (1) -g(0)D^\gamma f (0)\right]   + \langle - \mathbb L f ,g \rangle
\end{equation}
where 
\begin{align*}
\chi_\gamma:= \frac{c_{\gamma}}{\gamma (\gamma - 1)} \Bigg[ \int_0^1 \frac{1}{u^{2-\gamma} (1-u )^{\gamma-1} } - \frac{1}{\gamma-1} + \lim_{\delta \rightarrow 0^{+}} \int_{\delta}^{1} \Big( \frac{1}{u^{2-\delta} (u - \delta)^{\gamma - 1}} - \frac{1}{u} \Big) du \Bigg]
\end{align*}
 and 
\begin{equation}
(D^\gamma f)(0)=\lim_{u\to 0^+} f' (u) u^{2-\gamma}, \quad (D^\gamma f)(1)=\lim_{u\to 1^-} f' (u) (1-u)^{2-\gamma}.
\end{equation}
\end{prop}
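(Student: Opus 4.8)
\emph{Proof plan.} The plan is to read the two boundary terms as the precise obstruction to symmetrizing a regularized bilinear form, and to localize that obstruction in thin layers near the endpoints. For $\epsilon>0$ let $(\mathbb{L}_\epsilon f)(u)=c_\gamma\int_{v\in(0,1),\,|u-v|\ge\epsilon}\frac{f(v)-f(u)}{|u-v|^{1+\gamma}}\,dv$ be the truncated operator and let $\mc E_\epsilon(f,g)$ denote the truncation of the double integral defining $\langle f,g\rangle_{\gamma/2}$ to the region $\{|u-v|\ge\epsilon\}$. A relabelling $u\leftrightarrow v$ shows $\mc E_\epsilon(f,g)=\langle -\mathbb{L}_\epsilon f,g\rangle=\langle f,-\mathbb{L}_\epsilon g\rangle$ for each fixed $\epsilon$, all integrals being absolutely convergent because the cut-off removes the diagonal singularity and $f$ is bounded and continuous on $[0,1]$ (indeed $f(u)\sim\phi(0)u^{\gamma-1}$ near $0$ with $\gamma-1\in(0,1)$, where $\phi(u):=f(u)u^{1-\gamma}\in C^2$). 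Since $g\in C^2([0,1])$, $\mathbb{L}_\epsilon g\to\mathbb{L}g$ with a bound uniform in $\epsilon$, so dominated convergence gives $\langle-\mathbb{L}g,f\rangle=\lim_{\epsilon\to0}\langle f,-\mathbb{L}_\epsilon g\rangle=\lim_{\epsilon\to0}\mc E_\epsilon(f,g)=\lim_{\epsilon\to0}\langle-\mathbb{L}_\epsilon f,g\rangle$, whereas by the very definition of $\mathbb{L}f$ as a pointwise principal value one has $\langle-\mathbb{L}f,g\rangle=\langle-\lim_{\epsilon\to0}\mathbb{L}_\epsilon f,g\rangle$. Subtracting, the whole statement reduces to evaluating the limit-interchange anomaly
\[
\langle-\mathbb{L}g,f\rangle-\langle-\mathbb{L}f,g\rangle=\lim_{\epsilon\to0}\int_0^1 g(u)\,\big(\mathbb{L}f-\mathbb{L}_\epsilon f\big)(u)\,du ,
\]
which I must identify with $\chi_\gamma[g(1)D^\gamma f(1)-g(0)D^\gamma f(0)]$.

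The first point to settle is that this pairing is meaningful, i.e. that $\mathbb{L}f\in L^1([0,1],du)$ even though $f\notin C^2$ at the endpoints. This rests on the model profile $u\mapsto u^{\gamma-1}$ being annihilated by the regional fractional Laplacian on the half-line: a scaling computation gives $\mathbb{L}_{(0,\infty)}(u^{\gamma-1})=c_\gamma u^{-1}\,\mathrm{p.v.}\!\int_0^\infty\frac{w^{\gamma-1}-1}{|1-w|^{1+\gamma}}\,dw$, and the change of variables $w\mapsto 1/w$ forces the principal-value integral to vanish. Consequently the would-be nonintegrable $u^{-1}$ singularity of $\mathbb{L}f$ at each endpoint cancels and $\mathbb{L}f$ stays bounded there, so $\langle-\mathbb{L}f,g\rangle$ converges. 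The anomaly therefore does not come from a blow-up of $\mathbb{L}f$; it comes from the fact that $\mathbb{L}_\epsilon f$ develops a spike of height $O(\epsilon^{-1})$ in a layer of width $O(\epsilon)$ at each endpoint, whose mass against $g$ survives in the limit. Since $f$ is $C^2$ on every compact subinterval of $(0,1)$, the near-diagonal difference $(\mathbb{L}f-\mathbb{L}_\epsilon f)(u)$ is $O(\epsilon^{2-\gamma})$ uniformly on interior compacts and contributes nothing; hence the anomaly localizes at the two endpoints, and by the reflection $u\mapsto 1-u$ it suffices to treat the layer near $0$.

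There I would freeze $g(u)\approx g(0)$ and replace $f$ by its leading profile $\phi(0)u^{\gamma-1}$, the remainder $(\phi(u)-\phi(0))u^{\gamma-1}=O(u^\gamma)$ being regular enough to discard. The dilation $u=\epsilon s$, $v=\epsilon t$ then collapses $\int_0^\rho g(u)(\mathbb{L}f-\mathbb{L}_\epsilon f)(u)\,du$ into $c_\gamma\phi(0)\int_0^{\rho/\epsilon} g(\epsilon s)\,K(s)\,ds$ with the $\epsilon$-independent self-similar kernel $K(s)=\mathrm{p.v.}\!\int_{|s-t|<1,\,t>0}\frac{t^{\gamma-1}-s^{\gamma-1}}{|s-t|^{1+\gamma}}\,dt$, which is bounded near $s=0$ (again by the half-line cancellation) and decays like $s^{\gamma-3}$ at infinity, so that $\int_0^\infty K(s)\,ds$ converges precisely because $\gamma<2$. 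Passing to the limit yields $c_\gamma\phi(0)\,g(0)\int_0^\infty K(s)\,ds$; recalling $D^\gamma f(0)=\lim_{u\to0^+}f'(u)u^{2-\gamma}=(\gamma-1)\phi(0)$ (differentiate $f=\phi(u)u^{\gamma-1}$), this is exactly $-\chi_\gamma\,g(0)\,D^\gamma f(0)$ once $\chi_\gamma$ is set to the corresponding explicit constant, and the layer near $1$ produces $+\chi_\gamma g(1)D^\gamma f(1)$ by symmetry.

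The main obstacle is precisely this boundary-layer analysis: one must justify the localization and the dilation rigorously, control the principal values throughout, show that the regular remainder and the far field (integration against $v>1$) contribute no additional boundary term, and evaluate the self-similar integral to match the normalization of $\chi_\gamma$ in \cite{guan}. The nonlocality is what makes this delicate, since the $O(\epsilon^{-1})$ spike of $\mathbb{L}_\epsilon f$ and the cancellation rendering $\mathbb{L}f$ integrable both originate in the same half-line harmonicity of $u^{\gamma-1}$ and must be tracked simultaneously. Everything else (Fubini, the symmetrization $u\leftrightarrow v$, and the dominated convergence on the $C^2$ factor $g$) is routine.
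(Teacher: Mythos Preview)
The paper does not prove this proposition at all: it is quoted verbatim as Corollary~7.6 of \cite{guan}, with no argument supplied. So there is nothing in the present paper to compare your sketch against.

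As for the sketch itself, the strategy is sound and is in the spirit of how \cite{guan} proceeds: the key ingredients are indeed (i) the half-line harmonicity $\mathbb{L}_{(0,\infty)}(u^{\gamma-1})=0$, which both makes $\mathbb{L}f$ integrable and generates the boundary anomaly, and (ii) a self-similar boundary-layer computation that isolates $D^\gamma f$ at each endpoint. Your reduction of the problem to the limit-interchange anomaly $\lim_{\epsilon\to0}\int g(\mathbb{L}f-\mathbb{L}_\epsilon f)$ is correct, and the localization to thin layers is the right mechanism. What you correctly flag as the obstacle---controlling the principal values uniformly while simultaneously exploiting the cancellation that keeps $\mathbb{L}f$ bounded---is exactly the technical heart of the matter; in \cite{guan} this is handled by working from the outset with the factorization $f(u)=u^{\gamma-1}(1-u)^{\gamma-1}\psi(u)$ with $\psi\in C^2$, which makes the cancellation and the remainder estimates more transparent than freezing $\phi(0)$ as you propose. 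Your plan would go through, but the bookkeeping of the ``regular remainder'' and the ``far field'' contributions near each endpoint is where most of the work lies, and you have only asserted rather than shown that these vanish.
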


In order to have later a lighter notation it is useful to introduce a family of fractional operators $\{\mathbb{L}_{\kappa}\}_{\kappa \geq 0}$, defined in \cite{BJGO2} as
\begin{equation*}
{\bb L}_{\kappa}:= \LL -\kappa V_1,\quad \text{where} \quad V_1 (u) :=r^-(u)+r^+(u)
\end{equation*}
and,  for any $u \in (0,1)$,
\begin{equation}
      r^- (u):=c_\gamma \gamma^{-1} u^{-\gamma},\;\;\;  r^+ (u) := c_\gamma \gamma^{-1} (1-u)^{-\gamma}.
\end{equation}
We also define, for any $u \in (0,1)$ and $\alpha, \beta \in (0,1)$,
\begin{equation}
    V_0(u)=\alpha r^-(u)+ \beta r^+(u).
\end{equation}

\begin{rem}
   Observe that the operators $\mathbb L_{\kappa}$ defined above are symmetric operators in $L^2([0,1], du).$ Moreover, for $\kappa=1$, we recover the so-called \textit{restricted fractional Laplacian} (see \cite{piotr}), for any $f\in C^\infty_c((0,1))$:
\begin{align}
&\forall u \in (0,1), \quad -(-\Delta)^{\gamma/2}  f (u)  =({\bb L} f)(u) -  V_1(u) f(u)=(\LL_{1}f)(u)\label{Operator_LL}  
\end{align}
while for $\kappa = 0$ we get the  regional fractional Laplacian $\bb L$ defined in \eqref{definition}. 
\end{rem}

\begin{rem}
	It is well known that the fractional Laplacian $-(-\Delta)^{\gamma/2}$ is the infinitesimal generator of a $\gamma$-stable L\'evy jump process on $\mathbb R$. In the finite domain $(0,1)$, the regional fractional Laplacian $\mathbb L$ is also associated to a Markov process which roughly corresponds to the L\'evy jump process above reflected at the boundaries of $(0,1)$. Because it is a jump process, making sense of the previous sentence is not trivial (see \cite{guan}). Following \cite{guan}, the process with generator $\bb L$ is called the ``reflected symmetric $\gamma$-stable process". Its behavior is strongly dependent on the exponent $\gamma$. For $\gamma \in (0,1]$, it is essentially the same as the ``symmetric $\gamma$-stable censored process" defined in \cite{Bogdan} and which is obtained from the L\'evy jump process on $\bb R$ by suppressing its jumps from $(0,1)$ to its complement. It is proved that for $\gamma \le 1$, the censored (or equivalently the reflected) process  will never touch the boundaries, so that in some sense the process does not see the boundaries. This is not the case if $\gamma>1$.

	
	
\end{rem}


\begin{definition}
	\label{Def. Sobolev space}
	Let us denote by $\mathcal{H}^{\gamma/2}:=\mathcal{H}^{\gamma/2}([0,1])$ the Sobolev space containing all the functions $g\in L^2([0,1])$ such that $\| g \|_{\gamma/2} <\infty$. This is a Hilbert space endowed with the norm $\|\cdot\|_{{\mc H}^{\gamma/2}}$ defined by
	$$\| g \|_{\mathcal{H}^{\gamma/2}}^{2}:= \| g \|^{2} + \| g \|_{\gamma/2}^{2} .$$
	If $\gamma \in (1,2)$ its elements coincide a.e. with continuous functions. The space ${\mc H}_0^{\gamma/2}:={\mc H}_0^{\gamma/2}([0,1])$ is defined as the completion of $C_c^{\infty} ((0,1))$ for the norm just introduced. If $\gamma \in (1,2)$, its elements coincide a.e. with continuous functions vanishing at $0$ and $1$ and as pointed out in \cite{Val}, on ${\mc H}_0^{\gamma/2}$, the norms $ \|\cdot \|_{{\mc H}^{\gamma/2}}$ and  $\| \cdot \|_{\gamma/2}$ are equivalent. If { $\gamma \in (0,1] $  }, $\mc H_0^{\gamma/2}={\mc H}^{\gamma/2}$  (see Theorem 3, Section 2.4 of \cite{RS-book} ).    
	\end{definition}
	
	\begin{rem}
	The main difference between the case { $ \gamma \in (0,1] $ } and the case $\gamma \in (1,2)$ is that the trace on $\{0,1\}$ of a function $u\in {\mathcal H}^{\gamma/2}$ exists if $\gamma>1$ while it does not exist (a priori) if {$\gamma \leq 1$}.      
	\end{rem}

	The weak solutions of the hydrodynamic equations that we will define belong to the space $L^{2}(0,T;\mathcal{H}^{\gamma/2})$ which is the set of measurable functions $f:[0,T]\rightarrow  \mathcal{H}^{\gamma/2}$ such that 
	$$\int^{T}_{0} \Vert f_{t} \Vert^{2}_{\mathcal{H}^{\gamma/2}}dt< \infty. $$
	The spaces $L^{2}(0,T;\mathcal{H}_0^{\gamma/2})$ and   $L^{2}(0,T;L^{2}_h([0,1]) )$  are defined in an analogous way, using the spaces  {$\mathcal{H}_0^{\gamma/2}$ and $L^{2}_h([0,1])$} instead of $\mathcal{H}^{\gamma/2}$.

The regional fractional Laplacian can be extended to the space $\mathcal{H}^{\gamma/2}$. Indeed for any $\rho \in \mathcal{H}^{\gamma/2}$ we can define the distribution $\mathbb{L}\rho$ on $(0,1)$ by its action on functions $f \in C_c^{\infty}((0,1))$:
  $$\langle \LL\rho,f\rangle = \langle \rho,\LL f\rangle. $$
 The proof that $\mathbb{L}\rho$ is indeed a well defined distribution can be found in \cite{BJGO2}.

\subsection{Hydrodynamic equations}
\label{subsec:hyd_eq}
Now, that we have introduced all the notation and the spaces of functions that we will use, we can define the PDEs and respective notions of weak solutions,  which are involved in the hydrodynamic limit of this model. 


\begin{definition}
	\label{Def. Neumann Condition}
	Let { $\gamma \in (0,2)$ } and $g:[0,1]\rightarrow [0,1]$ be a measurable function. We say that  $\rho:[0,T]\times[0,1] \to [0,1]$ is a weak solution of the {regional} fractional diffusion equation with fractional Neumann boundary conditions and initial condition $g$:
	\begin{equation}
	\label{eq:Neumann Equation}
	\begin{cases}
	&\partial_{t} \rho_{t}(u)= \LL \rho_t(u),  \quad (t,u) \in [0,T]\times(0,1),\\
&(D^\gamma \rho^{\hat \kappa}_{t})(0)=  (D^\gamma \rho^{\hat \kappa}_{t})(1) =0,  \quad t \in (0,T], \\
	&{\rho}_{0}(u)= g(u),\quad u \in (0,1),
	\end{cases}
	\end{equation}
	if : 
	\begin{enumerate}[i)] 
		
		\item  $\rho \in L^{2}(0,T;\mathcal{H}^{\gamma/2})$.
		\item For all $t\in [0,T]$ and {all functions} $G \in C^{1,\infty} ([0,T]\times (0,1))$ we have that 
		\begin{equation}
		\label{eq:Neumann integral}
		\begin{split}
		F_{\rm{Neu}}(t, \rho,G,g):=&\left\langle \rho_{t},  G_{t} \right\rangle -\left\langle g,   G_{0}\right\rangle - \int_0^t\left\langle \rho_{s},\Big(\partial_s + \bb L \Big) G_{s}  \right\rangle ds =0.
		\end{split}   
		\end{equation}
	\end{enumerate}
\end{definition}

\begin{definition}
	\label{Def. Dirichlet Condition2}
	Let $\gamma \in (0,2)$, $\hat \kappa > 0$ be some parameter and $g:[0,1]\rightarrow [0,1]$ be a measurable function. We say that  $\rho^{\hat \kappa}:[0,T]\times[0,1] \to [0,1]$ is a weak solution of the {regional} fractional reaction-diffusion equation with non-homogeneous Dirichlet boundary conditions and initial condition $g$:
	\begin{equation}
	\label{eq:Dirichlet Equation2}
	\begin{cases}
	&\partial_{t} \rho_{t}^{\hat \kappa}(u)= \LL_{\hat \kappa} \rho_t^{\hat \kappa}(u)+\hat \kappa V_0(u),  \quad (t,u) \in [0,T]\times(0,1),\\
& \rho_{t}^{\hat \kappa}(0)=\alpha, \quad  \rho_{t}^{\hat \kappa}(1)=\beta,\quad t \in (0,T], \\
	& \rho_{0}^{\hat \kappa}(u)= g(u),\quad u \in (0,1),
	\end{cases}
	\end{equation}
	if : 
	\begin{enumerate}[i)] 
		
		\item  $\rho^{\hat \kappa} \in L^{2}(0,T;\mathcal{H}^{\gamma/2})$.
		
		\item   $\int_0^T \int_0^1 \Big\{ \frac{(\alpha-\rho_t^{\hat\kappa}(u))^2}{u^\gamma}+\frac{(\beta-\rho_t^{\hat\kappa}(u))^2}{(1-u)^\gamma}\Big\} \, du\, dt <\infty$.

		\item For all $t\in [0,T]$ and {all functions} $G \in C_c^{1,\infty} ([0,T]\times (0,1))$ we have that 
		\begin{equation}
		\label{eq:Dirichlet integral2}
		\begin{split}
		&F_{RD}(t, \rho^{\hat \kappa},G,g)\\
		&:=\left\langle \rho^{\hat \kappa}_{t},  G_{t} \right\rangle -\left\langle g,   G_{0}\right\rangle - \int_0^t\left\langle \rho^{\hat \kappa}_{s},\Big(\partial_s + \bb L_{\hat \kappa} \Big) G_{s}  \right\rangle ds -\hat \kappa\int_0^t\langle G_{s},  V_0\rangle ds =0.
		\end{split}   
		\end{equation}
	\end{enumerate}
\end{definition}

\begin{rem}
Observe that in Definition \ref{Def. Dirichlet Condition2}, the second item is automatically satisfied if $\gamma \in (0,1)$ since $\rho^{\hat \kappa}$ is uniformly bounded by $1$. If $\gamma \in (1,2)$ the second item implies that for a.e. time $t$ we have that $\rho^{\hat \kappa}_t (0) =\alpha$ and $\rho^{\hat \kappa}_t (1) =\beta$. We conjecture that this remains true for $\gamma \in (0,1]$ even if the weak formulation \eqref{eq:Dirichlet integral2} does not seem to imply trivially these boundary conditions.      
\end{rem}

\begin{definition}
	\label{Def. Dirichlet0 Condition}
	Let $\gamma \in (1,2)$. Let $g:[0,1]\rightarrow [0,1]$ be a measurable function. We say that  $\rho:[0,T]\times[0,1] \to [0,1]$ is a weak solution of the {regional} fractional diffusion equation with non-homogeneous Dirichlet boundary conditions and initial condition $g$:
	\begin{equation}
	\label{eq:Dirichlet0 Equation}
	\begin{cases}
	&\partial_{t} \rho_{t}(u)= \LL \rho_t(u),  \quad (t,u) \in [0,T]\times(0,1), \\
		&{ \rho_{t}}(0)=\alpha, \quad { \rho_{t}}(1)=\beta,\quad t \in (0,T], \\
	&{ \rho}_{0}(u)= g(u),\quad u \in (0,1),
	\end{cases}
	\end{equation}
	if : 
	\begin{enumerate}[i)] 
		
		\item  $\rho \in L^{2}(0,T;\mathcal{H}^{\gamma/2})$.
		\item For all $t\in [0,T]$ and {all functions} $G \in C^{1,\infty}_c ([0,T]\times (0,1))$ we have that 
		\begin{equation}
		\label{eq:Dirichlet Neumann}
		\begin{split}
		F_{Dir}(t, \rho,G,g):=&\left\langle \rho_{t},  G_{t} \right\rangle -\left\langle g,   G_{0}\right\rangle - \int_0^t\left\langle \rho_{s},\Big(\partial_s + \bb L \Big) G_{s}  \right\rangle ds=0.
		\end{split}   
		\end{equation}
		\item for  $t$ a.s. in $(0,T]$, $\rho_t(0)=\alpha$ and $\rho_t(1)=\beta$.
	\end{enumerate}
\end{definition}

\begin{definition}
	\label{Def. Robin Condition}
	Let $\hat \kappa\geq 0$, $\gamma \in (1,2)$ and let $g:[0,1]\rightarrow [0,1]$ be a measurable function. We say that  $\rho^{\hat \kappa}:[0,T]\times[0,1] \to [0,1]$ is a weak solution of the regional fractional diffusion equation with fractional Robin boundary conditions and initial condition $g$:
	\begin{equation}
	\label{eq:Robin Equation}
	\begin{cases}
	&\partial_{t} \rho^{\hat \kappa}_{t}(u)= \LL \rho^{\hat \kappa}_t(u),  \quad (t,u) \in [0,T]\times(0,1), \\
&\chi_\gamma (D^\gamma \rho^{\hat \kappa}_{t})(0) = {\hat \kappa} ( \alpha-\rho^{\hat \kappa}_t (0)), \quad t \in (0,T], \\
&   \chi_\gamma (D^\gamma \rho^{\hat \kappa}_{t})(1) ={\hat \kappa}  ( \beta-\rho^{\hat \kappa}_t (1)), \quad t \in (0,T], \\
	&{ \rho}^{\hat \kappa}_{0}(u)= g(u),\quad u \in (0,1),
	\end{cases} 
	\end{equation}
	if: 
	\begin{enumerate}[i)] 
		
		\item  $\rho^{\hat \kappa} \in L^{2}(0,T;\mathcal{H}^{\gamma/2})$.
		\item For all $t\in [0,T]$ and {all functions} $G \in C^{1,\infty} ([0,T]\times (0,1))$ we have that 
		\begin{equation}
		\label{eq:Robin Neumann}
		\begin{split}
		F_{Rob}(t, \rho^{\hat \kappa},G,g):=&\left\langle \rho^{\hat \kappa}_{t},  G_{t} \right\rangle -\left\langle g,   G_{0}\right\rangle - \int_0^t\left\langle \rho^{\hat \kappa}_{s},\Big(\partial_s + \bb L \Big) G_{s}  \right\rangle ds\\
		&  -\hat \kappa \int_0^t \big\{G_s(0)(\alpha-\rho^{\hat \kappa}_s(0))+G_s(1)(\beta-\rho^{\hat \kappa}_s(1))\big\}ds=0.  
		\end{split}   
		\end{equation}
	\end{enumerate}
\end{definition}

\begin{rem}
Observe that in \eqref{eq:Robin Neumann}, the terms $\rho_s^{\hat \kappa} (0)$ and $\rho_s^{\hat \kappa} (1)$ are well defined for a.e. time $s$ since $\rho^{\hat \kappa} \in L^{2}(0,T;\mathcal{H}^{\gamma/2})$ so that for a.e. time $s$ $\rho^{\hat \kappa}_s$ has a continuous representative on $[0,1]$.
\end{rem}

\begin{definition}
	\label{Def. Dirichlet Condition_kappa^infty}
	Let {$\gamma \in (0,2)$ } and $\hat \kappa > 0$ be some parameter and let  $g:[0,1]\rightarrow [0,1]$ be a measurable function. We say that  $\rho^{\hat \kappa}:[0,T]\times[0,1] \to [0,1]$ is a weak solution of the reaction equation with non-homogeneous Dirichlet boundary condition and initial condition $g$:  
	
	\begin{equation}\label{eq:Dirichlet Equation_infty}
	\begin{cases}
	&\partial_{t} \rho_{t}^{\hat  \kappa}(u)= -\hat \kappa\rho^{\hat \kappa}_{t}(u)V_{1}(u) +\hat\kappa V_{0}(u),  \quad (t,u) \in [0,T]\times(0,1),\\
	& \rho_{t}^{\hat \kappa}(0)=\alpha, \quad { \rho_{t}^{\hat \kappa}}(1)=\beta,\quad t \in (0,T], \\
	&{ \rho}_{0}^{\hat \kappa}(u)= g(u), \quad u \in (0,1),
	\end{cases}
	\end{equation}
	if: 
	\begin{enumerate}[i)]
	\item   $\int_0^T \int_0^1 \Big\{ \frac{(\alpha-\rho_t^{\hat\kappa}(u))^2}{u^\gamma}+\frac{(\beta-\rho_t^{\hat\kappa}(u))^2}{(1-u)^\gamma}\Big\} \, du\, dt <\infty$.
\item For all $t\in [0,T]$ and {all functions} $G \in C_c^{1,\infty} ([0,T]\times (0,1))$ we have
	\begin{equation}\label{eq:Dirichlet integral_infty}
		\begin{split}
		F_{Reac}(t, \rho^{\hat \kappa},G,g):=&\left\langle \rho_{t}^{\hat \kappa},  G_{t} \right\rangle  -\left\langle g,   G_{0}\right\rangle- \int_0^t\left\langle \rho_{s}^{\hat \kappa},\partial_s G_{s}  \right\rangle ds
		\\
		&+ \hat \kappa\int^{t}_{0} \left\langle \rho_{s}^{\hat \kappa},G_s \right\rangle_{V_1} ds - \hat \kappa\int^{t}_{0}\left\langle  G_s ,V_0\right\rangle ds=0.  
		\end{split}
		\end{equation}
		\end{enumerate}
\end{definition}
\begin{rem}
	Observe that if $\gamma\in (0,1)$ the first item is trivial since $\rho^{\hat \kappa}$ is uniformly bounded by $1$. Moreover, as proved in \cite{BJGO2}, the equation \eqref{eq:Dirichlet Equation_infty} has an explicit solution in the sense of Definition \ref{Def. Dirichlet Condition_kappa^infty} which is given by 
	\begin{equation}
		\frac{V_0(u)}{V_1(u)}+\bigg[g(u)-\frac{V_0(u)}{V_1(u)}\bigg]e^{-t \hat \kappa \tfrac{V_0(u)}{V_1(u)}},
	\end{equation}
	for any $u \in [0,1]$ and any $t \in [0,T]$.
    Therefore is trivial to show that $\rho_t^{\hat \kappa}(0)=\alpha$ and $\rho_t^{\hat \kappa}(1)=\beta$ for any $t \in (0,T]$.
\end{rem}

\begin{rem}
    The reader can easily  check  that thanks to Proposition \ref{prop:guan0} the notion of weak solution coincides formally {\footnote{It is only formal since the regularity properties of the solution of the PDE are not known.}} with the corresponding strong PDE formulation for each of the definition given above.

\end{rem}

\begin{prop}\label{uniqueness}
   The weak solutions of equations\eqref{eq:Neumann Equation},\eqref{eq:Dirichlet Equation2},  \eqref{eq:Dirichlet0 Equation}, \eqref{eq:Robin Equation} and \eqref{eq:Dirichlet Equation_infty} given in the definitions above are unique.
\end{prop}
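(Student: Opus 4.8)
\section*{Proof proposal for Proposition \ref{uniqueness}}

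The plan is to prove uniqueness by a linear energy estimate, carried out in parallel for the four diffusive equations and separately for the pure reaction equation. For each definition fix two weak solutions with the same initial datum $g$ and set $w:=\rho^1-\rho^2$. Since every equation is linear and the forcing $\hat\kappa V_0$, together with the data $\alpha,\beta$ entering the boundary terms, is common to both solutions, $w$ satisfies the corresponding weak formulation with $g$, $V_0$ and the inhomogeneous part of the boundary condition all replaced by $0$. In particular, for the Dirichlet equations \eqref{eq:Dirichlet0 Equation} and \eqref{eq:Dirichlet Equation2} the function $w$ has vanishing trace at $\{0,1\}$: for $\gamma\in(1,2)$ this follows from condition iii) of Definition \ref{Def. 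Dirichlet0 Condition} (resp.\ condition ii) of Definition \ref{Def. Dirichlet Condition2}), while for $\gamma\in(0,1)$ one uses $\mathcal H_0^{\gamma/2}=\mathcal H^{\gamma/2}$; hence $w_t\in\mathcal H_0^{\gamma/2}$ for a.e.\ $t$. For the Neumann and Robin equations one only has $w_t\in\mathcal H^{\gamma/2}$, with no trace constraint.

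For the diffusive equations I would first use Proposition \ref{prop:ip-guan} together with the continuity of the semi-inner product on $\mathcal H^{\gamma/2}$ to move the operator off the smooth test function and onto $w$: since $w_s\in\mathcal H^{\gamma/2}$ one has $\langle w_s,\mathbb L G_s\rangle=-\langle w_s,G_s\rangle_{\gamma/2}$, and for \eqref{eq:Dirichlet Equation2} the splitting $\mathbb L_{\hat\kappa}=\mathbb L-\hat\kappa V_1$ produces the extra term $-\hat\kappa\langle w_s,G_s\rangle_{V_1}$. Introducing the Gelfand triple $V\hookrightarrow L^2([0,1])\hookrightarrow V'$, with $V=\mathcal H_0^{\gamma/2}$ for the Dirichlet problems and $V=\mathcal H^{\gamma/2}$ for the Neumann and Robin problems, the reduced weak formulation identifies the distributional time derivative $\partial_t w$ as an element of $L^2(0,T;V')$ whose dual norm is controlled by $\|w_s\|_{\gamma/2}$ (plus, for Robin, the boundary values $w_s(0),w_s(1)$, which are well defined since $w_s$ has a continuous representative when $\gamma>1$).

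With this at hand the key step is the classical Lions lemma for Gelfand triples: since $w\in L^2(0,T;V)$ and $\partial_t w\in L^2(0,T;V')$, the map $t\mapsto\|w_t\|^2$ is absolutely continuous and $\tfrac{d}{dt}\|w_t\|^2=2\langle\partial_t w_t,w_t\rangle_{V',V}$. This legitimizes replacing the smooth test function $G$ by $w$ itself in the reduced weak formulation, after approximating $w$ in $L^2(0,T;V)$ by elements of the admissible test class, which is dense in $V$. Evaluating the resulting identity at time $t$ and using $w_0=0$ yields, for the Dirichlet and reaction--diffusion problems,
\begin{equation*}
\tfrac12\|w_t\|^2+\int_0^t\|w_s\|_{\gamma/2}^2\,ds+\hat\kappa\int_0^t\|w_s\|_{V_1}^2\,ds=0,
\end{equation*}
for the Neumann problem the same identity without the last term, and for the Robin problem the additional non-negative contribution $\hat\kappa m\int_0^t\big(w_s(0)^2+w_s(1)^2\big)\,ds$ on the left-hand side. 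In every case all summands are non-negative --- here condition ii) of Definition \ref{Def. Dirichlet Condition2} guarantees $\int_0^t\|w_s\|_{V_1}^2\,ds<\infty$ through a Hardy-type bound near the boundary --- so each of them must vanish; in particular $\|w_t\|=0$ for all $t$, i.e.\ $\rho^1=\rho^2$.

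Finally, the reaction equation \eqref{eq:Dirichlet Equation_infty} carries no diffusion and decouples pointwise in $u$: the difference $w$ solves, weakly, $\partial_t w=-\hat\kappa V_1 w$, a family of scalar linear ODEs with vanishing initial condition, so $w\equiv0$ (alternatively this follows from the explicit representation recalled in the Remark after Definition \ref{Def. Dirichlet Condition_kappa^infty}, as in \cite{BJGO2}). I expect the main obstacle to be the rigorous justification of the energy identity, namely verifying the hypotheses of the Lions lemma --- that $\partial_t w$ indeed lies in $L^2(0,T;V')$ and that $w_t$ belongs to the correct space $V$ for a.e.\ $t$ (which for the Dirichlet problems is precisely where the trace and integrability conditions are used) --- together with the passage from smooth test functions to $w$ itself; once this is in place, the sign structure of the remaining terms makes the conclusion immediate.
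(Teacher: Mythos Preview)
Your approach is correct in spirit and leads to uniqueness, but it differs from the paper's in a way worth highlighting. You invoke the Lions lemma for the Gelfand triple $V\hookrightarrow L^2\hookrightarrow V'$ in order to test the weak formulation with $w$ itself and obtain the pointwise-in-time energy identity $\tfrac12\|w_t\|^2+\int_0^t\|w_s\|_{\gamma/2}^2\,ds=0$ (plus the extra non-negative terms in the Robin and reaction--diffusion cases). The paper instead avoids the Lions machinery altogether: it approximates $w$ by smooth $G_n$ in $L^2(0,T;V)$ and plugs in the \emph{primitive} test function $H_n(t,u)=\int_t^T G_n(s,u)\,ds$, for which $\partial_s H_n=-G_n$ and $H_n(T,\cdot)=0$. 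Passing to the limit (their Lemma~\ref{6.1}) yields the time-integrated identity $\int_0^T\|w_s\|^2\,ds+\tfrac12\big\|\int_0^T w_s\,ds\big\|_{\gamma/2}^2=0$, which is weaker than yours but still forces $w=0$.

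What each buys: your route is the textbook one and gives a sharper identity, but it rests on checking that $\partial_t w\in L^2(0,T;V')$---which you correctly flag as the crux and which does follow from $\langle w_s,\mathbb L G_s\rangle=-\langle w_s,G_s\rangle_{\gamma/2}$ together with $w\in L^2(0,T;\mathcal H^{\gamma/2})$. The paper's route is more elementary (no abstract duality, no absolute continuity of $t\mapsto\|w_t\|^2$) at the price of a slightly less transparent limiting argument. One place where you should be more explicit is the Robin case: to pass to the limit in the boundary terms you need $G_n(\cdot,0)\to w_\cdot(0)$ in $L^2(0,T)$, which the paper establishes via the fractional Morrey embedding $\mathcal H^{\gamma/2}\hookrightarrow C^{0,(\gamma-1)/2}$ for $\gamma\in(1,2)$; in your framework this is exactly the continuity of the trace map $V\to\mathbb R^2$, and it deserves a sentence.
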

\begin{proof}
	It is detailed in Section \ref{unique_sec}.
\end{proof}

\subsection{Hydrodynamic limit}
Let ${\mc M}^+$ be the space of positive measures on $[0,1]$ with total mass bounded by $1$ equipped with the weak topology. For any configuration  $\eta \in \Omega_{N}$, we define the empirical measure $\pi^{N}(\eta,du) \in \mc M^+$ by 
\begin{equation}\label{MedEmp}
\pi^{N}(\eta, du)=\dfrac{1}{N-1}\sum _{x\in \Lambda_{N}}\eta(x)\delta_{\frac{x}{N}}\left( du\right),
\end{equation}
where $\delta_{a}$ is a Dirac mass on $a \in [0,1]$ and we denote
$$\pi^{N}_{t}(du):=\pi^{N}(\eta_t^N, du).$$

Fix $T>0$. We denote by $\PP _{\mu _{N}}$ the probability measure in the Skorohod space $\mathcal D([0,T], \Omega_N)$ induced by the  Markov process $\eta_{\cdot}^N $ with initial distribution $\mu_N$ and we denote by $\EE _{\mu _{N}}$ the expectation with respect to $\PP_{\mu _{N}}$. Let $\lbrace\mathbb{Q}^{N}\rbrace_{N\geq 1}$ be the  sequence of probability measures on $\mathcal D([0,T],\mathcal{M}^{+})$ induced by the  Markov process $\lbrace \pi_{t}^{N}\rbrace_{t\geq 0}$ and by $\mathbb{P}_{\mu_{N}}$.

\begin{definition}
Let $\rho_0: [0,1]\rightarrow[0,1]$ be a measurable function. We say that a sequence of probability measures $\lbrace\mu_{N}\rbrace_{N\geq 1 }$ on $\Omega_{N}$  is associated with the profile $\rho_{0}(\cdot)$ if for any continuous function $G:[0,1]\rightarrow \mathbb{R}$  and every $\delta > 0$ 
\begin{equation*}
\lim _{N\to\infty } \mu _{N}\left( \eta \in \Omega_{N} : \left\vert \dfrac{1}{N}\sum_{x \in \Lambda_{N} }G\left(\tfrac{x}{N} \right)\eta(x) - \int_{0}^1G(u)\rho_{0}(u)du \right\vert    > \delta \right)= 0.
\end{equation*}  
\end{definition}
The next statement is the main theorem of this work. The result is summarized in Figure \ref{fig:mesh1}. The proof is given in details in the next sections.
\begin{thm}[Hydrodynamic limit]

\label{theo:hydro_limit}
\quad

	Let $g:[0,1]\rightarrow[0,1]$ be a measurable function and let $\lbrace\mu _{N}\rbrace_{N\geq 1}$ be a sequence of probability measures in $\Omega_{N}$ associated to $g(\cdot)$. 
	Then, for any $0\leq t \leq T$,
	\begin{equation*}
	\label{limHidreform}
	\PP_{\mu _{N}}\left( \eta_{\cdot}^{N} \in \mathcal D([0,T], {\Omega_{N}}) : \left\vert \dfrac{1}{N-1}\sum_{x \in \Lambda_{N} }G\left(\tfrac{x}{N} \right)\eta_{t}^N(x) - \int_{0}^1G(u)\rho_{t}^{ \kappa}(u)du \right\vert    > \delta \right)
	\end{equation*}
	goes to $0$ as $N$ goes to infinity, where the time scale is given by 
	\begin{equation}
	\Theta(N)=\begin{cases} N^{\gamma + \theta} & \theta<0;\\
	N^{\gamma} & \theta\geq 0;
		\end{cases}
		\label{timescale}
\end{equation}
and  $\rho_{t}^{ \kappa}$ is the unique weak solution of:
	\begin{itemize}
		\item [$\bullet$] (\ref{eq:Neumann Equation}) if $\theta >0$ and { $\gamma \in (0,1]$} or if $\theta>\gamma-1$ and $\gamma \in (1,2)$;
	{\item [$\bullet$] (\ref{eq:Dirichlet Equation2}) with $\hat  \kappa= \kappa $, if $\theta =0$ and { $\gamma \in (0,1)\cup (1,2)$};}
		\item [$\bullet$] (\ref{eq:Dirichlet0 Equation})  if $\theta\in (0,\gamma-1)$ and $\gamma \in (1,2)$;
				\item [$\bullet$] (\ref{eq:Robin Equation}) with $\hat  \kappa= \kappa m$, if $\theta=\gamma-1$ and $\gamma \in (1,2)$;
				\item[$\bullet$] \eqref{eq:Dirichlet Equation_infty} with $\hat  \kappa= \kappa $, if $\theta<0$ and { $\gamma \in (0,2)$}.
	\end{itemize}
\end{thm}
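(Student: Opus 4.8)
The plan is to run the classical program for hydrodynamic limits via the empirical measure. First I would prove that the sequence $\{\mathbb Q^N\}_{N\ge 1}$ on $\mathcal D([0,T],\mc M^+)$ is tight; then I would show that every limit point is concentrated on absolutely continuous trajectories $\pi_t(du)=\rho_t^\kappa(u)\,du$ whose density is a weak solution of the relevant equation among \eqref{eq:Neumann Equation}--\eqref{eq:Dirichlet Equation_infty}, according to the position of $(\theta,\gamma)$ in Figure \ref{fig:mesh1}; finally, since Proposition \ref{uniqueness} guarantees uniqueness of weak solutions, all limit points coincide and the whole sequence converges to the Dirac mass on $\rho^\kappa_\cdot\,du$. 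The convergence in probability of $\langle \pi_t^N,G\rangle$ at a fixed time stated in Theorem \ref{theo:hydro_limit} then follows from the convergence of trajectories to a deterministic limit, which is continuous in $t$.

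For tightness I would fix $G\in C^{1,\infty}([0,T]\times[0,1])$ and use the Dynkin martingale
\[
M_t^N(G)=\langle \pi_t^N,G_t\rangle-\langle \pi_0^N,G_0\rangle-\int_0^t\big(\partial_s+\Theta(N)L_N\big)\langle \pi_s^N,G_s\rangle\,ds,
\]
whose predictable quadratic variation I expect to vanish at rate $O(N^{-1})$ once one uses the exclusion constraint, the summability of $p$, and the precise choice of time scale $\Theta(N)$ in \eqref{timescale}; Aldous' criterion then reduces tightness to a uniform bound on the drift term. Using the symmetry of $p$, the bulk part $\Theta(N)L_N^0\langle \pi^N,G\rangle$ rewrites, after transferring the discrete operator onto $G$, as the empirical average of $\eta$ against a discrete regional fractional Laplacian applied to $G$; here the scale $\Theta(N)=N^\gamma$ (for $\theta\ge 0$) is exactly the one for which this discrete operator has an $O(1)$ limit. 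The $L^1$-convergence of the discrete operator to $\mathbb L G$ established in Subsection \ref{sec:convergence_disc_cont_oper} is crucial because it permits test functions that do not vanish at the boundary, which is what ultimately lets the limit see the boundary conditions.

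The regime-dependence is carried entirely by the reservoir part $\kappa N^{-\theta}\Theta(N)(L_N^l+L_N^r)\langle \pi^N,G\rangle$. Summing the reservoir rates attached to a bulk site $x=uN$ over $y\le 0$ (resp. $y\ge N$) gives a factor comparable to $r^-(u)N^{-\gamma}$ (resp. $r^+(u)N^{-\gamma}$), so after multiplying by $\Theta(N)\kappa N^{-\theta}$ the reservoir contribution carries a prefactor $\kappa N^{-\theta}$ (when $\theta\ge 0$) or $\kappa$ (when $\theta<0$) against the weights $V_0,V_1$. This bookkeeping yields: the reaction terms of \eqref{eq:Dirichlet integral2} when $\theta=0$; the pure reaction equation \eqref{eq:Dirichlet Equation_infty} when $\theta<0$, where the bulk term is $O(N^\theta)$ and disappears; and, when $\theta>0$, a bulk diffusion that dominates in the interior together with a boundary effect concentrated at scale $u\sim 1/N$ whose strength is governed by the competition between the diffusive flux and the injection rate. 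Reading off this competition against the threshold $\theta=\gamma-1$ produces the Neumann, Robin and Dirichlet alternatives; in the critical Robin case the boundary flux localises and the first moment $m=\sum_{z\ge0}zp(z)$ emerges as the coefficient in \eqref{eq:Robin Equation}. To pass to the limit in all these terms I would use standard one- and two-block replacement estimates to substitute microscopic occupation variables by local spatial averages, and a microscopic energy estimate — bounding the expected Dirichlet form uniformly in $N$ and passing to the limit by lower semicontinuity — to guarantee that limit points live in $L^2(0,T;\mathcal H^{\gamma/2})$, which also makes the boundary traces in the Robin and Dirichlet formulations meaningful.

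The step I expect to be the main obstacle, as the introduction already flags, is the rigorous derivation of the Dirichlet boundary condition $\rho_t(0)=\alpha$ (and symmetrically at $1$) in the regime $\theta\in(0,\gamma-1)$, $\gamma\in(1,2)$: one must show that the empirical density in a box of size $\lfloor\varepsilon N\rfloor$ around site $1$ is close to $\alpha$ after $N\to\infty$ and then $\varepsilon\to0$. The direct replacement argument only controls a box of some fixed scale $\ell_0$, which is too small to reach the macroscopic boundary, so I would run the multi-scale renormalization of Section \ref{sec:fixing}, repeatedly doubling the box size until it attains $\lfloor\varepsilon N\rfloor$ and controlling the cost incurred at each doubling by the estimate on the change of energy under relocation of particles proved in Lemma \ref{posMPL} of Section \ref{MPL}. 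The delicate point there is genuinely combinatorial: because particles perform long jumps, a large number of distinct configurations contribute to each change of energy, and all of them have to be accounted for to obtain the summable error that closes the multi-scale induction and, with it, the Dirichlet condition.
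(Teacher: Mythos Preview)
Your proposal is correct and follows essentially the same architecture as the paper: tightness via Aldous' criterion and the Dynkin martingale (Proposition \ref{Tightness}), the energy estimate placing limit points in $L^2(0,T;\mathcal H^{\gamma/2})$ (Theorem \ref{Energy_Thm1}), characterization of limit points as weak solutions via the regime-by-regime analysis of the bulk and reservoir contributions (Proposition \ref{prop:weak_sol_car}), and the conclusion through the uniqueness results of Section \ref{unique_sec}. You have also correctly identified the two genuinely new technical inputs --- the $L^1$ convergence of $N^\gamma\mc L_N$ to $\mathbb L$ on non-compactly-supported test functions (Lemma \ref{L1}) and the multi-scale/moving-particle argument (Lemmas \ref{rep1}, \ref{lem:multi}, \ref{posMPL}) needed to fix the Dirichlet boundary values when $0\le\theta<\gamma-1$ --- as the places where the real work happens.
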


\begin{rem}
We observe that last result for $\theta=0$ and $\gamma=1$, can also be derived only in the sense of weak convergence through subsequences, since we lack a proof of uniqueness of weak solutions.
\end{rem}

To prove this theorem we will use some tools of the entropy method introduced in \cite{GPV}. Therefore, we will prove that the sequence $\{\bb Q^N\}_{N\in \bb N}$ is tight and we will characterize uniquely the limiting point $\bb Q$ by showing that it is a Dirac measure over the trajectory $\pi_t(du)=\rho(t,u)du $, where $\rho(t,u)$ is the unique weak solution of the corresponding hydrodynamic equation.

\subsection{Few words about stationary solutions}
In this subsection we discuss some facts about the study of stationary solutions $\lim_{t \to \infty} \rho_t^\kappa$ of the hydrodynamic limits, presented in Theorem \ref{theo:hydro_limit}, and their possible derivations from the stationary state of the microscopic model (hydrostatic limit).  

\begin{itemize}
\item If $\gamma \in (0,1)$ and $\theta > 0$ or if $\gamma \in (1,2)$ and $\theta> \gamma -1$, we conjecture that the stationary solution depends on the initial condition $g$ and it is given by the constant $\int_0^1 g(u) du$. On the other hand, in these regimes the hydrostatic profile is different and explicitly given by    
$$\rho^{\infty}(u)=\int_0^1\dfrac{V_0(w)}{V_1(w)}dw=\dfrac{\alpha+\beta}{2}$$ for any $u \in [0,1]$. The hydrostatic limits can be derived by following the recent strategy developed in  \cite{tsu}.\\

\item If $\gamma \in (0,2)$  and $\theta<0$ the stationary solution is given by 
$$\rho^{\infty}(u)=\dfrac{V_0(u)}{V_1(u)}$$ for any $u \in [0,1]$ and coincides with the hydrostatic profile which can probably be derived by following the strategy presented in \cite{BJ} combined with the methods used in this paper. \\

\item In the other regimes the stationary solution is expected to be unique, but not explicit, connecting $\alpha$ to $\beta$, apart from the case of Robin boundary conditions ($\gamma \in (1,2)$, $\theta=\gamma -1$). Here it should also coincide with the hydrostatic profile which can, likely, be derived by following the strategy presented in \cite{BJ} and the results of the present paper. \\
\end{itemize}

The hydrostatic profiles obtained numerically are represented in Figure \ref{stationary} below. We observe that apart from the flat case, they are nonlinear and not smooth at the boundaries.  

\begin{figure}[htb!]
\includegraphics[trim={0.05cm  0.05cm 0.05cm 0.05cm}, clip, width=12cm]{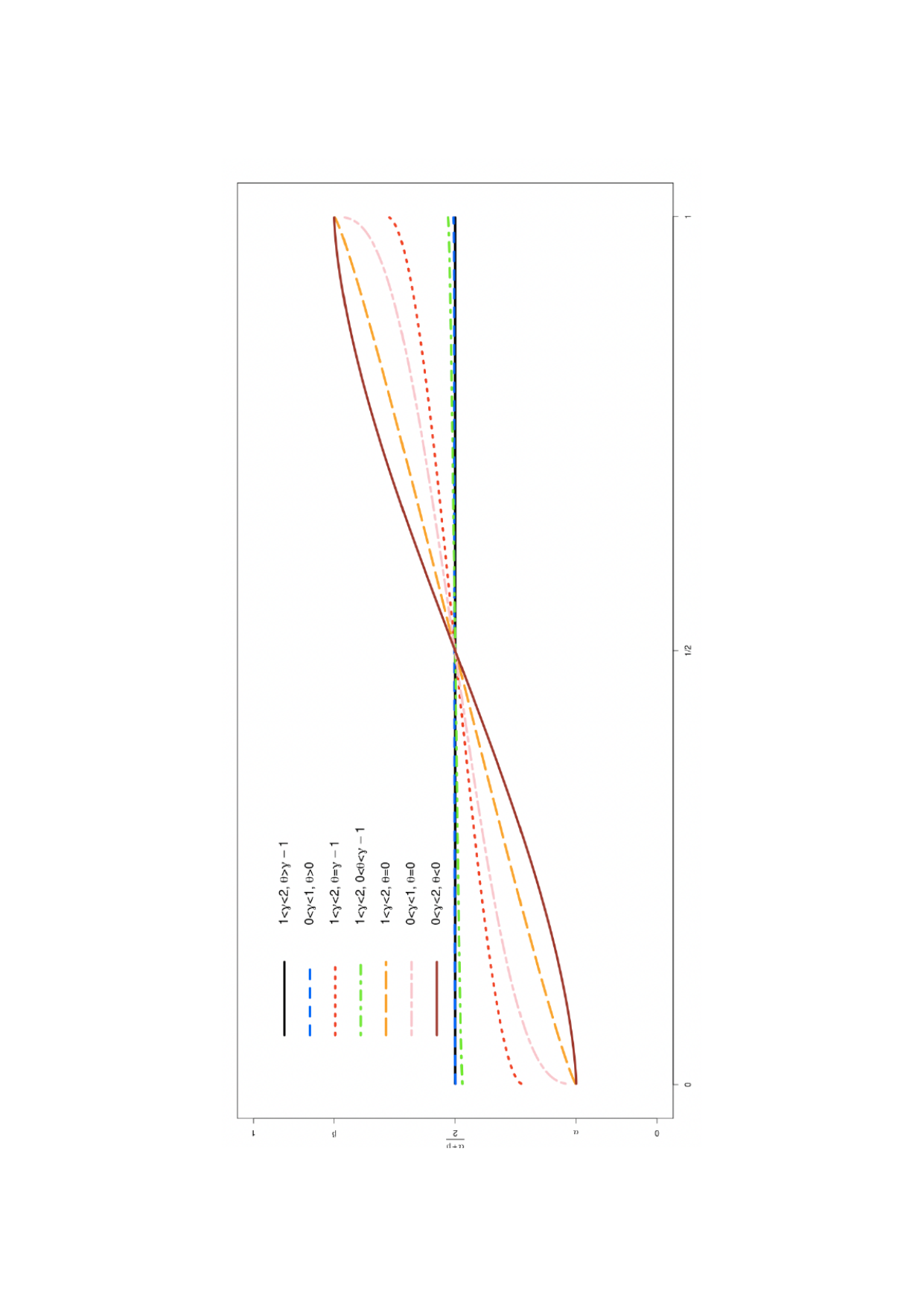} 
\caption{Here we present some numerical simulations of stationary solutions of the PDEs according to the values of $\gamma$ and $\theta$.}
\label{stationary}
\end{figure}
\subsection{Variations of the dynamics}

In this section we present some results about three models with different  dynamics but which can be studied with the techniques developed in  this paper. We do not write the details of the proofs. We consider only the infinite variance case ($\gamma<2$).

\subsubsection{Super-diffusive model with reservoirs acting only on one site}\label{strano1_def}
This particular variant of our model has the same dynamics inside the bulk but the reservoirs are only located at the sites $0$ and $N$ and they act only on the sites $1$ and $N-1$, respectively. It means that the action of the reservoirs is exactly the same as the one presented in \cite{Adriana} while the bulk dynamics is the same as the one presented in Subsection \ref{model}.
Therefore the generator associated to this model is defined by its action on functions $f:\Omega_N \rightarrow \RR$ as:
\begin{equation}
\begin{split}
L_Nf(\eta)&=\frac{1}{2} \sum_{x,y \in \Lambda_N} p(x-y) [f(\sigma^{x,y}\eta) -f(\eta)]\\
&+\frac{\kappa}{N^{\theta}}[\alpha(1-\eta(1))+(1-\alpha)\eta(1)]f(\sigma^{1}\eta) -f(\eta)]\\
&+\frac{\kappa}{N^{\theta}}[\beta(1-\eta(N-1))+(1-\beta)\eta(N-1)]f(\sigma^{N-1}\eta) -f(\eta)].
\end{split}
\end{equation}

\begin{figure}[htb!]
	
	\begin{center}
		\begin{tikzpicture}[thick, scale=1.1]
		
		\fill [color=blue!45] (-3.95,-0.4) rectangle (-3.15,0.4);
		\fill [color=blue!45] (3.95,-0.4) rectangle (3.15,0.4);
		
		\draw[] (-3.5,0) -- (3.5,0) ;
		\foreach \x in {-3.5,-3,-2.5,...,3.5}
		\pgfmathsetmacro\result{\x*2+7}
		\draw[shift={(\x,0)},color=black] (0pt,0pt) -- (0pt,-2pt) node[below]{\scriptsize \pgfmathprintnumber{\result}};
		
		\node[ball color=black, shape=circle, minimum size=0.3cm] (B) at (-1.5,0.15) {};
		
		\node[ball color=black, shape=circle, minimum size=0.3cm] (C) at (1.5,0.15) {};
		
		\node[ball color=black, shape=circle, minimum size=0.3cm] (D) at (3,0.15) {};
		
		\node[ball color=black, shape=circle, minimum size=0.3cm] (E) at (-2,0.15) {};
		
		\node[draw=none] (H) at (-1.5,-0.15) {};
		\node[draw=none] (W) at (-3.5,-0.15) {};
		\node[draw=none] (S) at (3.5,0.15) {};
		\node[draw=none] (R) at (0,0.15) {};
		\node[draw=none] (L) at (-3.5,0.15) {};
		\node[draw=none] (M) at (-3,0.15) {};
				\node[draw=none] (HHH) at (3.5,-0.15) {};
						\node[draw=none] (HH) at (2,-0.15) {};

		\path [<-] (S) edge[bend right =70, color=blue]node[above] {\footnotesize $(1-\beta)\dfrac{\kappa}{N^{\theta}}$}(D);
		\path [->] (C) edge[bend right =70, color=blue]node[above] {\footnotesize $\dfrac{p(3)}{2}$}(R);			
		\path [<-] (M) edge[bend right =70, color=blue]node[above] {\footnotesize $\alpha\dfrac{\kappa}{N^{\theta}}$}(L);
		\path [<-] (W) edge[bend right =70, color=red]node[above] {} node {/}(H);
		\path [<-] (HH) edge[bend right =70, color=red]node[above] {} node {/}(HHH);

		\end{tikzpicture}
		\caption{Example of the dynamics of the model presented in Section \ref{strano1_def}, with $N=14$.}	
		\label{fig_strano1}
	\end{center}	
\end{figure}

In this case we need to accelerate the system by a factor which does not depend on $\theta$ and which is constantly equal to $\Theta(N)=N^{\gamma}$. Then, following the same strategies used in this work, it is possible to prove that the hydrodynamic limits associated to this model are given by the following hydrodynamic equations (summarized in Figure \ref{strano1}):
\begin{itemize}
	\item if $\theta<\gamma-1$ we get the  equation of Definition \ref{Def. Neumann Condition};
	
	\item if $\theta = \gamma-1$ we get the equation of Definition \ref{Def. Robin Condition} with $m=1$ and $\hat \kappa = \kappa$;
	\item if $\theta > \gamma-1$ we get the equation of Definition \ref{Def. Neumann Condition}.
\end{itemize}

\begin{figure}[htb!]
	\begin{center}
	\begin{tikzpicture}[scale=0.20]
	\fill[color=red!35] (-25,-12) -- (-25,-18)--  (5,-18)-- (5,2)-- cycle;
	\fill[color=green!55] (-25,-12) -- (-10,-5) -- (5,2) -- (5,8) -- (-25,8) -- cycle;
	\draw[dotted, black] (-25,-5) -- (5,-5);
		\draw[dotted, black] (-25,2) -- (5,2);
			\draw[dotted, black] (-25,-12) -- (5,-12);
	\draw[dotted, ultra thick, white] (-25,10) -- (-25,-20);
	\draw[dotted, ultra thick, white] (-10,10) -- (-10,-20);
	\draw[dotted, ultra thick, white] (5,10) -- (5,-20);
	\draw[-,=latex,blue ,ultra thick] (-25,-12) -- (5, 2) node[midway, sloped, above] {{{\textbf{\small{\textcolor{blue}{Frac. Diff. \& Robin b.c.}}}}}};

	\node[right, white] at (-21,6) {\textbf{\small{\textcolor{dark green}{Frac. Diff. \& Neumann b.c.}}}} ;
	\node[right, white] at (-21,-14) {\textbf{\small{\textcolor{red}{Frac. Diff.  \& Dirichlet b.c.}}}} ;
	\node[rotate=270, below] at (5,1) {\textcolor{white}{$\gamma = 2$}};
	\node[rotate=270, above] at (-10,1) {\textcolor{white}{$\gamma = 1$}};
	\node[rotate=270, above] at (-25,1) {\textcolor{white}{$\gamma = 0$}};
	\node[left] at (-25,-5) {\textcolor{black}{$\theta=0$}};
		\node[left] at (-25,-12) {\textcolor{black}{$\theta=-1$}};
			\node[left] at (-25,2) {\textcolor{black}{$\theta=1$}};
	
	\draw[<-,blue]  (-5,-3) -- (-3.5,-7) node[right] {\small{\textcolor{blue}{$\theta=\gamma-1$}}};
	
	\end{tikzpicture}
	\caption{Summary of result on hydrodynamic limits for the model introduced in Section \ref{strano1_def}.}
	\label{strano1}
	\end{center}
\end{figure}
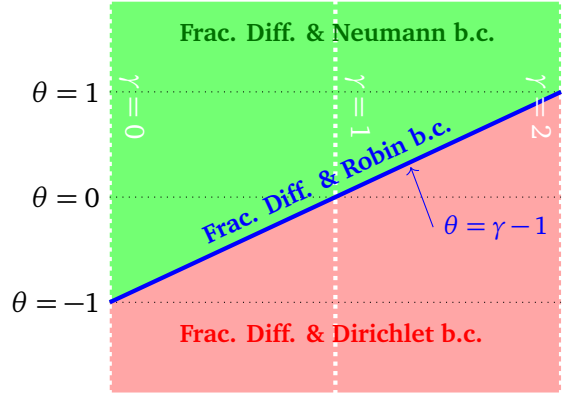

\subsubsection{Diffusive model with reservoirs acting on all the sites}\label{strano2_def}
In this other variation of our model, the reservoirs have the same dynamics as the one described in Section \ref{model}, while in the bulk the particles move according to the nearest-neighbour dynamics defined in \cite{Adriana}, that is in the bulk the transition probability $p(\cdot)$ has range $1$. Therefore the generator associated to this model is given by its action on functions $f:\Omega_N \rightarrow \RR$ as:
\begin{equation}
\begin{split}
L_Nf(\eta)&=\sum_{x=1}^{N-2}[f(\sigma^{x,x+1}\eta)-f(\eta)]\\
&+\frac{\kappa}{N^{\theta}}\sum_{x \in \Lambda_N}\sum_{y\leq 0} p(x-y)c_{x}(\eta;\alpha) [f(\sigma^x\eta) - f(\eta)]\\
&+ \frac{\kappa}{N^{\theta}}\sum_{x \in \Lambda_N }\sum_{y\geq N} p(x-y) c_{x}(\eta;\beta)  [f(\sigma^x\eta) - f(\eta)]
\end{split}
\end{equation}
where $c_x(\eta,\alpha)$ and $c_x(\eta,\beta)$ were defined in \eqref{rate_c}.  

\begin{figure}[htb!]
	
	\begin{center}
		\begin{tikzpicture}[thick, scale=1.1]
		
		\draw[shift={(-5.01,-0.15)}, color=black] (0pt,0pt) -- (0pt,0pt) node[below]{\dots};
		\draw[shift={(5.01,-0.15)}, color=black] (0pt,0pt) -- (0pt,0pt) node[below]{\dots};
		
		\fill [color=blue!45] (-5.3,-0.6) rectangle (-3.3,0.6);
		\fill [color=blue!45] (5.3,-0.6) rectangle (3.3,0.6);

	\draw[-latex] (-5.3,0) -- (5.3,0) ;
	\draw[latex-] (-5.3,0) -- (5.3,0) ;
	\foreach \x in {-4.5,-4,-3.5,...,4.5}
		\pgfmathsetmacro\result{\x*2+7}
		\draw[shift={(\x,0)},color=black] (0pt,0pt) -- (0pt,-2pt) node[below]{\scriptsize \pgfmathprintnumber{\result}};
		
		\node[ball color=black, shape=circle, minimum size=0.3cm] (B) at (-1.5,0.15) {};
		
		\node[ball color=black, shape=circle, minimum size=0.3cm] (C) at (1.5,0.15) {};
		
		\node[ball color=black, shape=circle, minimum size=0.3cm] (D) at (2,0.15) {};
		
		\node[ball color=black, shape=circle, minimum size=0.3cm] (E) at (-2,0.15) {};
		
		\node[draw=none] (S) at (3.5,0.15) {};
		\node[draw=none] (R) at (0,0.15) {};
		\node[draw=none] (L) at (-4.5,0.15) {};
		\node[draw=none] (M) at (-2.5,0.15) {};
		\node[draw=none] (MM) at (-1,0.15) {};
		
		\path [<-] (S) edge[bend right =70, color=blue]node[above] {\footnotesize $(1-\beta)\dfrac{\kappa}{N^{\theta}}p(3)$}(D);
		\path [->] (C) edge[bend right =70, color=red]node[above] {} node{/} (R);			
		\path [<-] (M) edge[bend right =70, color=blue]node[above] {\footnotesize $\alpha\dfrac{\kappa}{N^{\theta}}p(4)$}(L);
		\path [<-] (MM) edge[bend right =70, color=blue]node[above] {\footnotesize $\frac{1}{2}$}(B);

		\end{tikzpicture}
		\caption{Example of the dynamics of the model presented in Section \ref{strano2_def}, with $N=14$.}	
		\label{fig_strano2}
	\end{center}	
\end{figure}

The hydrodynamic equations associated to this model (summarized in Figure \ref{strano2}) 
are given by:
\begin{itemize}
	\item For $\theta>2-\gamma$, if we accelerate the system by a factor $\Theta(N)=N^2$, we get the heat equation with Neumann boundary conditions i.e.
	\begin{equation}
	\begin{cases}
	\partial_t\rho_t(u)=\Delta \rho_t(u) &  (t,u) \in [0,T] \times (0,1)  ;\\
	\partial_u\rho_t(0)=\partial_u\rho_t(1)=0 & t \in (0,T];\\
	\rho_0(u)=g(u) & u \in (0,1);
	\end{cases}
	\end{equation}
	for some arbitrary initial condition $g:[0,1]\rightarrow \RR$ and with the notion of weak solution introduced in Definition 2.7 of \cite{Adriana};
	\item For $\theta=2-\gamma$, if we accelerate the system by a factor $\Theta(N)=N^2$, we get the reaction-diffusion equation with Dirichlet boundary conditions i.e.
	\begin{equation}
	\begin{cases}
	\partial_t\rho_t(u)=\Delta \rho_t(u)+V_0(u)\rho_t(u) &(t,u) \in [0,T] \times (0,1);\\
	\rho_t(0)=\alpha \text{ and } \rho_t(1)=\beta & t \in (0,T];\\
	\rho_0(u)=g(u) & u \in (0,1);
	\end{cases}
	\end{equation}
	for some arbitrary initial condition $g:[0,1]\rightarrow \RR$ and with the notion of weak solution introduced in Definition 2.2 of \cite{BGJO} but with $\sigma^2/2$ that now is equal to $1$;
	\item For $\theta<2-\gamma$, if we accelerate the system by a factor $\Theta(N)=N^{\gamma+\theta}$ we get the reaction equation introduced in Definition \ref{Def. Dirichlet Condition_kappa^infty} with $\hat \kappa = \kappa$.
\end{itemize}

\begin{figure}[htb!]
	\begin{center}
		\begin{tikzpicture}[scale=0.20]
		\fill[color=red!35] (-25,5) -- (-25,-20)--  (5,-20)-- (5,-15)-- cycle;
		\fill[color=blue!20] (-25,5) -- (-10,-5) -- (5,-15) -- (5,10) -- (-25,10) -- cycle;
		\draw[dotted, black] (-25,5) -- (5,5);
		\draw[dotted, black] (-25,-15) -- (5,-15);
		\draw[dotted, ultra thick, white] (-25,10) -- (-25,-20);
		\draw[dotted, ultra thick, white] (-10,10) -- (-10,-20);
		\draw[dotted, ultra thick, white] (5,10) -- (5,-20);
		\draw[-,=latex,blue ,ultra thick] (-25,5) -- (5, -15) node[midway, sloped, above] {{{\textbf{\small{\textcolor{blue}{Reac. Diff.  \& Dirichlet b.c.}}}}}};
		
		\node[right, white] at (-20,6) {\textbf{\small{\textcolor{dark blue}{Diffusion \& Neumann b.c.}}}} ;
		\node[right, white] at (-20,-14) {\textbf{\small{\textcolor{red}{Reaction \& Dirichlet b.c.}}}} ;
		\node[rotate=270, below] at (5,1) {\textcolor{white}{$\gamma = 2$}};
		\node[rotate=270, above] at (-10,1) {\textcolor{white}{$\gamma = 1$}};
		\node[rotate=270, above] at (-25,1) {\textcolor{white}{$\gamma = 0$}};
		\node[left] at (-25,-15) {\textcolor{black}{$\theta=0$}};
		\node[left] at (-25,5) {\textcolor{black}{$\theta=2$}};
		
		\draw[<-,blue]   (-14,-3)--(-16,-6) node[below] {\small{\textcolor{blue}{$\theta=2-\gamma$}}};
		
		\end{tikzpicture}
		\caption{Summary of the results on hydrodynamic limit for the model introduced in Section \ref{strano2_def}.}
		\label{strano2}
	\end{center}
\end{figure}
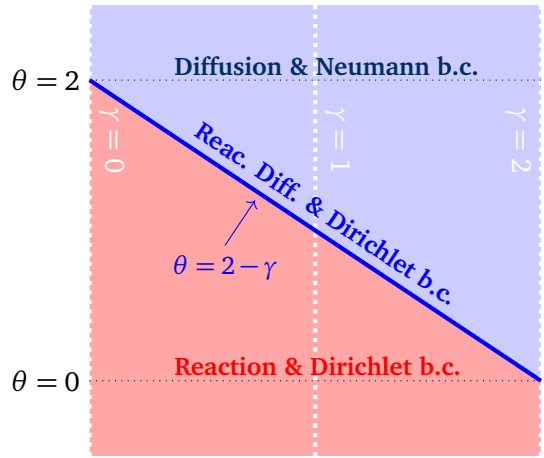

\subsubsection{One site reservoirs' with long jumps}
Another interesting case which deserves a mention is the model with the same dynamics  as the ones introduced in Subsection \ref{model} but with reservoirs only at the sites $0$ and $N$. This was studied in the lecture notes \cite{patricianote} and the analysis is almost analogous to the one we do in this work and therefore we will not treat this case here.

\section{Proof of  Theorem \ref{theo:hydro_limit}}
\subsection{Heuristics for the hydrodynamic equations}\label{heursitcs}
In this section we give the main ideas of the proof of  the derivation of the weak solution of the  hydrodynamic equation for each regime of $\theta$ and $\gamma$. Let us, by now, assume that the probability measure $\mathbb Q^N$ on $\mathcal{D}([0,T],\mathcal{M}^+)$ defined above,  converges to a measure $\mathbb Q$ on the same space, concentrated on a trajectory of measures {$\pi_t(du)$  }, which  is  absolutely continuous with respect to the Lebesgue measure with a density {$\rho(t,u)$ }. Moreover, we will assume Theorem \ref{Energy_Thm1}, which will be proved later. This means that we can assume  that $\rho \in L^{2}(0,T;\mc H^{\gamma/2})$ and that $\rho_s(0)$ is well-defined for almost every $s \in [0,T]$.  We will prove the aforementioned convergence, up to a subsequence, in the next subsections.

For simplicity we will forget the time dependence on the test functions, which would only make the notation heavier without bringing additional difficulties. Therefore,  consider a test function $G:[0,1]\rightarrow \RR$ for which $\mathbb{L}G$ is well defined. From Dynkin's formula (Lemma A.5 of \cite{KL}) we have that 
\begin{equation}\label{Dynkin'sFormula}
M_{t}^{N}(G)= \langle \pi_{t}^{N},G\rangle -\langle \pi_{0}^{N},G\rangle-\int_{0}^{t}\Theta(N) L_{N}\langle \pi_{s}^{N},G\rangle ds,
\end{equation}
is a martingale with respect to the natural filtration $\{\mc F^N_{t} \}_{t\geq 0}$, where  $\mathcal{F}^N_{t}:=\sigma(\lbrace\eta^N_s\rbrace_{s \leq t})$ for all $t\in [0,T]$. In the previous display we denoted by $\langle \pi_{s}^{N},G\rangle$ the integration of the function $G$ on $[0,1]$ with respect to the empirical measure $\pi^N_s$, i.e.
\begin{equation*}
\langle \pi_{s}^{N},G\rangle=\dfrac{1}{N-1}\sum _{x\in \Lambda_{N}}G\left( \tfrac{x}{N}\right)\eta^N_{s}(x).
\end{equation*}
Now, we compute the explicit value of the integrand function in \eqref{Dynkin'sFormula}, which is    given by
\begin{equation}
		\label{gen_action}
		\begin{split}
		\Theta(N) &L_N \langle \pi_s^N, G \rangle  = \cfrac{\Theta(N)}{N-1} \sum_{x\in \Lambda_N}  (\mc L_NG)(\tfrac{x}{N}) \eta^N_s(x) \\
		&\quad \quad \quad + \cfrac{ \kappa \Theta(N)}{(N-1)N^{\theta}} \sum_{x \in \Lambda_N}  G(\tfrac{x}{N}) \left( r_{N}^{-}(\tfrac{x}{N}) (\alpha- \eta^N_s(x))+   r_{N}^{+}(\tfrac{x}{N}){(\beta- \eta^N_s(x) )}\right),
		\end{split}
		\end{equation}
where,  $\mc L_{N} G$ is defined on test functions $G$ by
\begin{equation}
\label{LN}
({\mc L}_N G) (\tfrac{x}{N}) = \sum_{y \in \Lambda_N} p(y-x) \left[ G(\tfrac{y}{N}) -G(\tfrac{x}{N})\right],
\end{equation}
for all  $x\in \Lambda_N$ with $(\mc L_{N} G)(0) = (\mc L _{N}G)(1) = 0$ and $r^\pm(x/N)$ are defined as
\begin{equation}
r^-_N(\tfrac{x}{N})=\sum_{y\geq x}p(y) \quad \text{ and } \quad r^+_N(\tfrac{x}{N})=\sum_{y\leq N- x} p(y).
\end{equation}

We have to study the various terms in \eqref{gen_action} in the different regimes of  $\gamma \in (0,2)$  and $\theta \in \mathbb{R}$. Recall that the cases $\theta \leq 0$ and $\gamma \in (1,2)$ were already  treated in \cite{BJGO2} and,  for that reason, they will not be considered in this work. 

\subsubsection{Case $\theta <0$ and  $\gamma \in (0,1]$ } \label{reactionheur}
In this regime the time scale  is given by $\Theta(N)=N^{\gamma+\theta}$ and the test functions are in $ C_c^{\infty}([0,1])$. Let us first analyze the first term on the right-hand side of \eqref{gen_action}. Recalling \eqref{LN} and performing a Taylor expansion of $G$ around $x/N$, we can bound from above this term by a constant times
\begin{equation}
\cfrac{N^{\gamma+\theta}}{N^2} \sum_{x\in \Lambda_N} \sum_{\substack{y \in \Lambda_N\\y\neq x}} \cfrac{1}{|y-x|^{\gamma}} 
\end{equation}
plus lower order terms with respect to $N$, since $G'$ is uniformly bounded and $|\eta^N_s(x)|\leq 1$. Then, changing variables, that term can be bounded from above by a constant times
\begin{equation} \label{2141}
\cfrac{N^{\gamma+\theta}}{N} \sum_{x=1}^{N-1} \cfrac{1}{N} \sum_{z=1}^{x} \cfrac{1}{z^{\gamma}}\lesssim N^{\theta-1}\sum_{x=1}^{N-1}\int_{\tfrac{1}{N}}^{\tfrac{x}{N}}u^{-\gamma}du\lesssim N^{\theta-1}\sum_{x=1}^{N-1}(\tfrac{x}{N})^{-\gamma+1}\lesssim N^{\theta},
\end{equation}
assuming $\gamma \in (0,1)$. If $\gamma=1$, the left-hand side of \eqref{2141} is 
\begin{equation} \label{2142}
 N^{\theta}  \sum_{x=1}^{N-1} \cfrac{1}{N} \sum_{z=1}^{x} \cfrac{1}{z^{-1}} \lesssim N^{\theta}  \sum_{x=1}^{N-1} \cfrac{1+\log(x)}{N}  \lesssim N^{\theta} [1 + \log(N)].
\end{equation}
So, since $\theta<0$, the term in the last display vanishes in \eqref{2141} and \eqref{2142}, as $N$ goes to infinity.

Let us now analyze the second terms on the right-hand side of \eqref{gen_action}. We treat in details just the term involving $r^-_N$ since the other one is completely analogous. Since $G$ has compact support, that   term can be rewritten as
\begin{equation}
\begin{split}
& \cfrac{ \kappa}{N-1} \sum_{x =aN}^{N-1}  G(\tfrac{x}{N}) \left(N^{\gamma}r_{N}^{-}(\tfrac{x}{N})-r^-(\tfrac{x}{N})\right) (\alpha- \eta^N_s(x))\\
&+\cfrac{ \kappa}{N-1} \sum_{x =aN}^{N-1}  G(\tfrac{x}{N}) r^-(\tfrac{x}{N})(\alpha- \eta^N_s(x)),
\end{split}
\label{reac1}
\end{equation}
for some $a \in (0,1)$. We will show now that the term on the left-hand side in the previous display vanishes as $N$ goes to infinity. In order to do that, we use the estimate $$|N^{\gamma}r^-_N(\tfrac zN)-r^-(\tfrac zN)|\leq c_{\gamma}N^{-1}(\tfrac zN)^{-\gamma-1}$$ when $z/N>a$ for some fixed $a \in (0,1)$, proved in the Appendix of \cite{BJ}. Using this estimate, the fact that $(\alpha-\eta^N_s(x))$ is uniformly bounded and $G$ has compact support, we get that the  term on the left-hand side of \eqref{reac1} is bounded from above by a constant times
 \begin{equation}
\cfrac{\kappa}{N^2} \sum_{x = a N}^{N-1}  (\tfrac{x}{N})^{-\gamma-1}\lesssim \cfrac{1}{N}\int_{a}^{1}u^{-\gamma-1}du\lesssim N^{-1},
\end{equation}
which vanishes as $N$ goes to infinity. Finally, we prove that the second term of \eqref{reac1} converges to 
\begin{equation}
\label{cont2}
\kappa \int_0^1 \alpha G(u)r^-(u)du-\kappa\int_0^1 G(u)r^-(u)\rho^{\kappa}_s(u)du,
\end{equation}
provided that $\pi_s^N$ converges to an absolutely continuous measure, with respect to the Lebesgue measure on $[0,1]$,  with density $\rho_s^{\kappa}(\cdot)$. In order to do that let us denote $H:=G r^-$ which is a function in $C_c^{\infty}([0,1])$. Then, the last term in \eqref{gen_action} is equal to
\begin{equation}
\frac{\kappa}{N-1}\sum_{x\in \Lambda_N}H(\tfrac{x}{N})(\alpha-\eta_s^N(x)),
\end{equation}
plus terms vanishing as $N$ goes to infinity.
Which, under the hypothesis of the convergence of $\pi^N$ that we just did, is exactly the Riemann series converging to \eqref{cont2}.
Repeating the same proof for the term on the right-hand side of \eqref{gen_action} involving $r^+_N$, we get that the whole term converges to
\begin{equation}
\kappa \int_0^1G(u)V_0(u)du-\kappa\int_0^1 G(u)V_1(u)\rho^\kappa_s(u)du,
\end{equation}
as $N$ goes to infinity, which is the term involved in the definition of the weak solution for this regime of $\theta$ and $\gamma$.

\subsubsection{Case $\theta=0$ and  $\gamma \in (0,1]$  } \label{thetazero}
In this regime the time scale is $\Theta(N)=N^{\gamma}$ and the test functions are again in $ C_c^{\infty}([0,1])$. Then, the first term on the right-hand side of \eqref{gen_action} can be written as
\begin{equation}
\label{fraclap}
\cfrac{1}{N-1} \sum_{x\in \Lambda_N}\big( N^{\gamma} (\mc L_NG)(\tfrac{x}{N})-\mathbb{L}G(\tfrac{x}{N}) \big)\eta^N_s(x)+\cfrac{1}{N-1} \sum_{x\in \Lambda_N}\mathbb{L}G(\tfrac{x}{N}) \eta^N_s(x).
\end{equation}
The first term on the left-hand side of the previous display vanishes as $N$ goes to infinity, as a consequence of the fact that $|\eta^N_s(x)|\leq 1$ and Lemma \ref{L1}. Now, using an argument similar to the one used in the proof of Lemma \ref{L1}, it is possible to approximate the operator $\mathbb{L}$ by an operator $\mathbb{L}_{\epsilon}$ in such a way that $\mathbb{L}_{\epsilon}G$ is a continuous function on $(0,1)$ and $\lim_{\epsilon \rightarrow 0}||\mathbb{L}_{\epsilon}G-\mathbb{L}G||_{{L^1(0,1)}}=0$. 

Therefore we can rewrite the term on the right-hand side of \eqref{fraclap} as
\begin{equation*}
\cfrac{1}{N-1} \sum_{x\in \Lambda_N}\big[\mathbb{L}G(\tfrac{x}{N})-\mathbb{L}_{\epsilon}G(\tfrac{x}{N}) \big]\eta^N_s(x)+\cfrac{1}{N-1} \sum_{x\in \Lambda_N}\mathbb{L}_{\epsilon}G(\tfrac{x}{N}) \eta^N_s(x).
\end{equation*}
Then, reasoning as we did in the proof of Lemma \ref{L1}, it is possible to show that the first term of the previous display vanishes as $\epsilon \rightarrow 0$ and the second one converges, as $N\rightarrow \infty$ and $\epsilon \rightarrow 0$, to
\begin{equation}
\int_0^1 \mathbb{L}G(u)\rho^\kappa_s(u)du.
\label{fraclapreacdiff}
\end{equation}

The second term on the right-hand side of \eqref{gen_action} can be treated exactly as in the previous case, where we showed that it converges to \eqref{cont2}
as $N$ goes to infinity. Note that \eqref{fraclapreacdiff} and \eqref{cont2} are exactly the terms involved in the definition of weak solution in this regime of $\theta$ and $\gamma$.

\subsubsection{Case  $\gamma \in (0,1]$   and $\theta>0$ or $\gamma \in (1,2)$ and $\theta>\gamma-1$} \label{thetapos}
In these regimes the time scale is again $\Theta(N)=N^{\gamma}$ and the test functions are  in $ C^{\infty}([0,1])$. In both cases,  the analysis of the first term in $\eqref{gen_action}$ is completely analogous to the one that we did in the previous subsection. Indeed, using Lemma \ref{L1} it is possible to show, with the assumption  on the convergence of $\pi^N$, that this term converges to \eqref{fraclapreacdiff} with $G \in C^{\infty}([0,1])$.
This is the only term involved in the definition of weak solution in these regimes. Indeed, we show now that the second term on the right-hand side of \eqref{gen_action} vanishes, as $N$ goes to infinity. Since $G(\tfrac{x}{N})$ and $\alpha-\eta^N_s(x)$ are bounded, we can {bound from above} the part involving $r^-_N$ on  the second term on the right-hand side of \eqref{gen_action} as a constant times 
\begin{equation}
N^{\gamma-\theta-1}\sum_{x\in \Lambda_N}r^-_N(\tfrac{x}{N}).
\label{bound}
\end{equation}
Now, observe that: 
\begin{itemize}
	\item in the case $\gamma \in (0,1]$   the sum in \eqref{bound} is diverging so we have to bound it in some way. Observing that $y^{-\gamma-1}$ is a decreasing function we can perform the following bound \begin{equation}
	\begin{split}
	\sum_{x\in \Lambda_N}&r^-_N(\tfrac{x}{N})=c_{\gamma}\sum_{x\in \Lambda_N}\sum_{y\geq x}y^{-\gamma-1}\lesssim r^-_N(\tfrac{1}{N}) +\sum_{x=2}^{N-1}\sum_{y\geq x}y^{-\gamma-1}\\&\leq \frac{1}{2} + \sum_{x=2}^{N-1}\int_{x-1}^{+\infty}y^{-\gamma-1}dy\leq \frac{1}{2} + \frac{1}{\gamma}\sum_{x=1}^{N-2}x^{-\gamma}, 
	\end{split}
	\end{equation}
	since $r^-_N(\tfrac{1}{N})=\frac{1}{2}$. We observe that the sum in the last display can be bounded by $N^{-\gamma+1}$ if $\gamma \in (0,1)$ and by $\log(N)$ if $\gamma =1$.  So, we can bound \eqref{bound} by $$\frac{N^{\gamma-\theta-1}}{2}+\frac{N^{-\theta}}{\gamma}\lesssim N^{-\theta}$$ if $\gamma \in (0,1)$ and by $$\frac{N^{-\theta}}{2}+\frac{\log(N)}{\gamma}\lesssim N^{-\theta}[1+\log(N)]$$ if $\gamma=1$ and both vanish, as $N$ goes to infinity; 
	\item in the case $\gamma \in (1,2)$ the sum in \eqref{bound} is converging \textcolor{black}{since, in this regime, $\sum_{x \in \Lambda_N}r^-_N(\tfrac{x}{N})$ converges to $m$ (see \cite{BJGO2})}. So the order of the whole term is $N^{\gamma-\theta-1}$, therefore it vanishes, as $N$ goes to infinity, since $\theta >\gamma-1$.
\end{itemize} \color{black}
The term involving $r^+_N$ in \eqref{gen_action} is treated in the same way. Notice that the integration by parts formula (Proposition \ref{prop:ip-guan}) plus the boundary conditions of Definition \ref{Def. Neumann Condition} implies \eqref{eq:Neumann integral} (under the assumption of convergence of $\pi^N$).

\subsubsection{Case $\gamma \in (1,2)$ and $0<\theta< \gamma-1$}
In this regime the time scale is $\Theta(N)=N^{\gamma}$ and the test functions are in $C^{\infty}_c ([0,1])$. The analysis of the first term at the right-hand side of \eqref{gen_action} is analogous to the one of the previous case. 

Let us then show how we treat the second term on the right-hand side of \eqref{gen_action}. We will use the fact that the test function $G$ satisfies $G(x)=0$ for $x \in \{0,1\}$. Observe indeed that the term in \eqref{gen_action} involving $r^-_N$ can be rewritten as a constant times
\begin{equation*}
N^{\gamma-\theta-1}\sum_{x\in \Lambda_N}x^{-\gamma}G(\tfrac{x}{N}),
\end{equation*}
since the order of $r^-_N(x/N)$ is $x^{-\gamma}$ when we are considering ${x}>aN$ for some fixed $a \in (0,1)$ (see Lemma 3.3 of \cite{BJ}) and this is true whenever the test function has compact support.
Then, performing a Taylor expansion on $G$ around $0$ we can bound it from above  by a constant times
\begin{equation*}
N^{\gamma-\theta-2}\sum_{x\in \Lambda_N}x^{-\gamma+1},
\end{equation*}
plus lower order terms in $N$.
Therefore, since the sum in the last display is  of order $N^{-\gamma+2}$, the order of the whole term in the last display is $N^{-\theta}$,  and it vanishes as $N$ goes to infinity. The same analysis can be done for the term involving $r^+_N$ in \eqref{gen_action}.

The item (iii) of Definition \ref{Def. Dirichlet0 Condition} also holds, indeed
the fact that $\rho_t(0)=\alpha$ and $\rho_t(1)=\beta$ follows by standard arguments thanks to Proposition \ref{prop} (see Appendix A.4 of \cite{patricianote} for details). 

\subsubsection{Case $\gamma \in (1,2)$ and $\theta= \gamma-1$}\label{robinheur}
In this regime the time scale is $\Theta(N)=N^{\gamma}$ and the test functions are in  $C^{\infty}([0,1])$. The analysis of the first term on the right-hand side of \eqref{gen_action} is analogous to the previous cases and is done using Lemma \ref{L1}.

The analysis of the second term on the right-hand side of \eqref{gen_action} is a bit different from the previous cases.

Consider first the part involving $r^-_N$. For any $\epsilon >0$, we can rewrite it as
\begin{equation*}
\begin{split}
&\cfrac{ \kappa N^{\gamma}}{(N-1)N^{\gamma-1}} (\alpha-\overrightarrow \eta_s^{\epsilon N}(0))\sum_{x \in \Lambda_N}  G(\tfrac{x}{N}) r_{N}^{-}(\tfrac{x}{N}) \\
&+\cfrac{ \kappa N^{\gamma}}{(N-1)N^{\gamma-1}} \sum_{x \in \Lambda_N}  G(\tfrac{x}{N}) r_{N}^{-}(\tfrac{x}{N}) (\overrightarrow \eta_s^{\epsilon N}(0)-\eta^N_s(x)),
\end{split}
\end{equation*}
where $\overrightarrow \eta_s^{\epsilon N}(0)$ is defined in \eqref{mean}.
On the other hand, the term on the right-hand side of the previous display vanishes, as $N$ goes to infinity and then as $\epsilon$ goes to $0$, thanks to Remark \ref{robin_rem}.

The term on the left-hand side of the last display, performing a Taylor expansion on $G$, can be written as
\begin{equation}
\cfrac{ \kappa N^{\gamma}}{(N-1)N^{\gamma-1}} G(0)(\alpha- \overrightarrow \eta_s^{\epsilon N}(0))\sum_{x \in \Lambda_N}  r_{N}^{-}(\tfrac{x}{N})+\cfrac{\kappa}{N-1} G'(0)(\alpha- \overrightarrow \eta_s^{\epsilon N}(0))\sum_{x \in \Lambda_N}   xr_{N}^{-}(\tfrac{x}{N})
\label{robin}
\end{equation}
plus lower order terms in $N$. Observe that the term on the right-hand side of the previous display can be bounded by a constant times
\begin{equation*}
\cfrac{1}{N-1} \sum_{x \in \Lambda_N}   x^{-\gamma+1} = O(N^{-\gamma+1}),
\end{equation*}
 which goes to $0$ as $N$ goes to infinity, since $\gamma >1$. The remaining term in \eqref{robin} converges in some sense to $G(0)(\alpha-\rho^\kappa_s(0))m$, as  $N$ goes to infinity, since $\lim_{N \rightarrow \infty}\sum_{x \in \Lambda_N}  r_{N}^{-}(\tfrac{x}{N})=m$ (see, for example, \cite{BJGO2}, for details about this computation) and since, in some sense, well explained in \cite{patricianote}, $\overrightarrow \eta_s^{\epsilon N}(0)\rightarrow \rho^{\kappa}_s(0)$. Then, reasoning in a similar way as before,  we can conclude that, for $N$ going to infinity, the second term on the right-hand side of \eqref{gen_action} converges to
\begin{equation*}
mG(0)(\alpha-\rho^{\kappa}_s(0))+mG(1)(\beta-\rho^{\kappa}_s(1)).
\end{equation*}
This is exactly the term involved in \eqref{eq:Robin Neumann} and also that gives us, using the integration by parts formula (see Proposition \ref{prop:ip-guan}), the boundary conditions in the Definition \ref{Def. Robin Condition}.

\subsection{Tightness} \label{subsec:Tightness}
In this subsection we prove that the sequence $\lbrace  \mathbb {Q}^{N} \rbrace_{N \geq 1} $ is tight. Despite our approach being standard (see \cite{BGJO}), we decided to write all the arguments for the convenience of the reader.  To prove tightness, we use  Proposition 4.1.6 in \cite{KL}, from where it  is enough to show that, for all $\epsilon >0$ 
\begin{equation}
\label{T1}
\displaystyle \lim _{\delta \rightarrow 0} \limsup_{N\rightarrow\infty} \sup_{\tau  \in \mathcal{T}_{T},\bar\tau \leq \delta} {\mathbb{P}}_{\mu _{N}}\Big[\eta_{\cdot}^{N}\in {\mathcal D} ( [0,T], \Omega_{N}) :\left\vert \langle\pi^{N}_{\tau+ \bar\tau},G\rangle-\langle\pi^{N}_{\tau},G\rangle\right\vert > \epsilon \Big]  =0, 
\end{equation}
for any function $G$ belonging to $C^0([0,1])$ . Above $\mathcal{T}_{T}$ is the set of stopping times bounded by $T$ and we implicitly assume that all the stopping times are bounded by $T$, thus, $\tau+ \bar\tau$ should be read as $ (\tau+ \bar\tau) \wedge T$. Indeed, we prove below that (\ref{T1}) is true for any function in $C_{c}^{2}([0,1])$, and then, by using an $L^1$ approximation procedure (as it is done in \cite{BGJO}), we can extend the result  to functions in $ C^0 ([0,1])$.

\begin{prop}\label{Tightness}
	The sequence of measures $\lbrace\mathbb{Q}^{N}\rbrace_{N\geq 1}$ is tight with respect to the Skorohod topology of $\mathcal D([0, T],{\mathcal{M^{+}}})$.
	\begin{proof}
		The proof is almost analogous to the one  presented  in \cite{BJGO2} so we will omit most of the details.
		We are going to prove (\ref{T1}) for functions $G$ in  $C_{c}^{2}([0,1])$. Recall now  (\ref{Dynkin'sFormula}). Then, in order to prove (\ref{T1}), it is enough to show that
		\begin{eqnarray} 
		\label{TC1}
		\displaystyle \lim _{\delta\rightarrow 0} \limsup_{N\rightarrow\infty} \sup_{\tau  \in \mathcal{T}_{T},\bar\tau \leq \delta}\mathcal{\mathbb{E}}_{\mu _{N}}\left[ \Bigg| \int_{\tau}^{\tau+ \bar\tau}\Theta(N) L_{N}\langle \pi_{s}^{N},G\rangle ds \Bigg|\right] = 0
		\end{eqnarray}
		and
		\begin{equation} 
		\label{TC2}
		\displaystyle \lim _{\delta\rightarrow 0} \limsup_{N\rightarrow\infty} \sup_{\tau  \in \mathcal{T}_{T},\bar\tau \leq \delta}\mathcal{\mathbb{E}}_{\mu _{N}}\left[\left( M_{\tau}^{N}(G)- M_{\tau+ \bar\tau}^{N}(G) \right)^{2}  \right]=0.
		\end{equation}
		
We start with 	\eqref{TC1}. Observe that we know the explicit form of the term inside the integral in \eqref{TC1} which is given by \eqref{gen_action}. Then,
		using Lemma \eqref{L1} and the fact that $G\in C_{c}^{2}((0,1))$ we can bound the first term on the right-hand side of (\ref{gen_action}) by a constant, since the regional fractional Laplacian is well defined on functions with compact support. For the second term on the right-hand side of \eqref{gen_action} we can proceed as follows. Using the fact that $|\eta_s^N (x)| \le 1$  and a Taylor expansion on $G$ around $0$ we get that it can be bounded by a constant times
		\begin{equation}\label{Tx}
		\Theta(N)N^{-\theta-2}\sum_{x\in \Lambda_N} x(r_{N}^{-}(\tfrac{x}{N})+r_{N}^{+}(\tfrac{x}{N}))
		\end{equation}
		plus lower order terms. Since $r^-_N(x/N)$ and $r^+_N(x/N)$ are both of order $x^{-\gamma}$, the sum is diverging and can be bounded by something of order $N^{-\gamma+2}$. This means that the order of the whole previous display is $\Theta(N)N^{-\gamma-\theta}$. So, for every value of $\theta$ and $\Theta(N)$ this term is of order at most $1$.
	  Then we have just finished to show that 
		\begin{equation}
	|\Theta(N) L_N ( \langle \pi^N_{s}, G \rangle )| \lesssim 1
		\end{equation}
		for any $s\le T$, which trivially implies \eqref{TC1}. 
		
	Now we prove \eqref{TC2}.
	By Dynkin's formula (see Appendix 1  in \cite{KL}) we know that  $$\displaystyle\left( M^{N}_{t}(G)\right)^{2}-\int^{t}_{0} \Theta(N)\left[ L_{N} \langle\pi^{N}_{s},G \rangle^{2}- 2\langle\pi^{N}_{s},G \rangle L_{N} \langle\pi^{N}_{s},G \rangle\right]ds,$$
		is a martingale with respect to the natural filtration  $\{\mathcal{F}_{t}\}_{t\ge 0}$. 
	    Proceeding as in \cite{BJGO2} it is possible to prove that the previous display is bounded from above by a constant times
		\begin{equation}
		\label{T6}
		\begin{split}
		&\dfrac{\Theta(N)}{(N-1)^{4}} \sum_{x,y\in\Lambda_{N}} (x-y)^{2}p(x-y)
		+\dfrac{\kappa \Theta(N)}{(N-1)^{2}N^{\theta}}\sum_{ x\in\Lambda_{N}} \left(G\left(\tfrac{x}{N}\right)\right)^{2} \left( r_{N}^{-}(\tfrac{x}{N}) +r_{N}^{+}(\tfrac{x}{N})  \right).\\
		\end{split}
		\end{equation}

		Then, we can bound from above the leftmost term in the previous display as
			\begin{equation*}
			\dfrac{\Theta(N)}{(N-1)^{4}} \sum_{x,y\in\Lambda_{N}} (x-y)^{2}p(x-y)\lesssim \dfrac{\Theta(N)}{(N-1)^{4}} \sum_{x \neq y\in\Lambda_{N}} |x-y|^{1-\gamma}\lesssim \frac{\Theta(N)}{N}N^{-\gamma}\leq N^{-1},
			\end{equation*}
if $\gamma \in (0,1]$. In the case $\gamma \in (1,2)$, the same procedure of \cite{BJGO2} leads to an upper bound of order $N^{\gamma-2}$.	The remaining term in (\ref{T6}) is of order less than $\mathcal{O}(1)$, since performing a Taylor expansion on $G$ around $0$ we can rewrite it as a constant times
		\begin{equation*}
		\dfrac{ \kappa \Theta(N)}{(N-1)^{2}N^{2+\theta}}(G'(0))^2\sum_{ x = aN}^{N-1} x^{-\gamma+2}
		\end{equation*}
  plus lower order terms in $N$, for some $a \in (0,1)$. Then, since the sum is of order $N^{3-\gamma}$, this term is at most of order $\Theta(N)N^{-\gamma-\theta-1}$, which, for any  value of $\Theta(N)$ given by \eqref{timescale}, is bounded by $N^{-1}$ and  vanishes as $N$ goes to infinity.
		
		Thus, since $\tau$ is a bounded stopping time we have that  
		\begin{equation*}
		\begin{split} 
			&\displaystyle \lim _{\delta\rightarrow 0} \limsup_{N\rightarrow\infty} \sup_{\tau  \in \mathcal{T}_{T},\bar\tau \leq \delta}\mathcal{\mathbb{E}}_{\mu ^{N}}\left[\left( M_{\tau}^{N,G}- M_{\tau+ \bar\tau}^{N,G} \right)^{2}  \right]\\
			&=\displaystyle \lim _{\delta\rightarrow 0} \limsup_{N\rightarrow\infty} \sup_{\tau  \in \mathcal{T}_{T},\bar\tau \leq \delta}\mathcal{\mathbb{E}}_{\mu ^{N}}\left[ \int^{\tau+\bar\tau}_{\tau}\Theta(N)\left[ L_{N} \langle\pi^{N}_{s},G \rangle^{2}- 2\langle\pi^{N}_{s},G \rangle L_{N} \langle\pi^{N}_{s},G \rangle\right]ds \right]\\
			&=0.
			\end{split}
		\end{equation*}
		
		Therefore, we have proved (\ref{T1}) for functions $G$ in $C_{c}^{2}([0,1])$ and, as we have recalled in the beginning of the proof, this is enough to conclude tightness.
	\end{proof}
\end{prop}

\subsection{Energy estimates}
\label{subsec:EE}
We prove in this subsection that any limit point $\mathbb{Q}$ of the sequence $\lbrace\mathbb{Q}^{N}\rbrace_{N\geq 1}$ is concentrated on trajectories $\{\pi_{t}(u)du\}_{t\in [0,T]}$ with finite energy, i.e., $\pi$ belongs to the space $L^{2}(0,T;\mc H^{\gamma/2})$. The latter is the content of Theorem  \ref{Energy_Thm1}  stated below. Despite our approach being standard (see \cite{BJGO2}), we decided to write the arguments for the convenience of the reader.  Fix a limit point $\mathbb{Q}$ of the sequence $\lbrace\mathbb{Q}^{N}\rbrace_{N\geq 1}$ and  assume, without loss of generality, that the  whole sequence $\mathbb{Q}^{N}$ converges to $\mathbb{Q}$, as $N$ goes to infinity.
		\begin{thm}\label{Energy_Thm1}
	For  $\theta \in \mathbb{R}$  and  $\gamma \in (0,2)$,	the measure $\mathbb{Q}$  is concentrated on trajectories of measures of the form $\{\pi_{t}(u)du\}_{t\in [0,T]}$, such that for any interval $I \subset [0,T]$  the density $\pi$ satisfies $\mathbb{Q}$-a.s. 
	\begin{enumerate}[i)] 
	\item		$$\int _{I} \Vert \pi_{t} \Vert _{\gamma/2}^{2}dt \lesssim \vert I\vert, \; \text{if} \; \theta \geq 0.  $$ 
	\item  $$\int _{I} \int_0^{1} \Bigg\{  \frac{[\alpha -\pi_{t}(u)]^2 }{u^{\gamma}} +  \frac{[\beta -\pi_{t}(u)]^2 }{(1-u)^{\gamma}}   \Bigg\} du dt \lesssim \vert I\vert, \; \text{if} \; \theta \leq 0.  $$ 
	\end{enumerate}
\end{thm}

Before we prove Theorem \ref{Energy_Thm1}, we establish some estimates on the Dirichlet form which are needed for the proof. Let
$\rho : [0, 1] \rightarrow [\alpha, \beta]$. Let $\nu_{\rho(\cdot)}^{N}$ be the inhomogeneous Bernoulli product measure on 
$\Omega_{N}$ with marginals given by
\begin{equation}
\label{product}
\nu_{\rho(\cdot)}^{N}\lbrace\eta: \eta(x) = 1 \rbrace = \rho\left( \tfrac{x}{N}\right).
\end{equation}
We denote  by $H_{N}(\mu\vert \nu_{\rho(\cdot)}^{N})$ the relative entropy of a probability measure $\mu$ on 
$\Omega_{N}$ with respect to the probability measure $\nu_{\rho(\cdot)}^{N} $. For exclusion processes on a finite domain and for any initial measure $\mu_N$,  it is easy to prove the existence of a constant $C_0$, such that
\begin{equation}\label{H}
H(\mu_{N}\vert \nu_{\rho(\cdot)}^{N})\leq C_0 N,
\end{equation}
see for example \cite{BGJO} for the proof. We remark here that the restriction $\alpha,\beta\notin {\{0,1\}}$ comes from last estimate since the constant $C_0$  above is given by $C_0=-\log(\alpha \wedge(1-\beta))$, when $\alpha<\beta$. On the other hand, for a probability measure $\mu$ on $\Omega_N$ and a density function $f:\Omega_N \to [0,\infty)$ with respect to $\mu$ we introduce  
\begin{equation} 
\label{left_rig_form}
D_{N}^{0}(\sqrt{f},\mu):=\tfrac{1}{2}\sum_{x,y\in\Lambda_N}p(y-x)\, I_{x,y}(\sqrt{f},\mu), 
\end{equation}
\begin{equation} 
\label{left_dir_form}
D_{N}^{l}(\sqrt{f},\mu):=\sum_{x\in\Lambda_N}\sum_{y\leq 0}p(y-x)\, I^\alpha_{x}(\sqrt{f},\mu)=\sum_{x\in\Lambda_N}r_N^-(\tfrac{x}{N})I^\alpha_{x}\, (\sqrt{f},\mu),
\end{equation}
\begin{equation} 
\label{right_dir_form}
D_{N}^{r}(\sqrt{f},\mu):=\sum_{x\in\Lambda_N}\sum_{y\geq N }p(y-x)\, I^\beta_{x}(\sqrt{f},\mu)=\sum_{x\in\Lambda_N}r_N^+(\tfrac{x}{N})I^\beta_{x}\, (\sqrt{f},\mu).
\end{equation}
 Above, we used the following notation
\begin{equation}
\label{Ixy}
\begin{split}
	I_{x,y}(\sqrt f,\mu):=& \int \left(\sqrt {f(\sigma^{x,y}\eta)}-\sqrt {f(\eta)}\right)^{2} d\mu,\\
	I_{x}^{\delta}(\sqrt f,\mu):=& \int  c_{x}(\eta;\delta)\left(\sqrt {f(\sigma^{x}\eta)}-\sqrt {f(\eta)}\right)^{2} d\mu, \quad \delta\in\{\alpha, \beta\}.
	\end{split}
\end{equation}
Our goal is to express, for the measure $\nu_{\rho(\cdot)}^{N}$,  a relation between the Dirichlet form defined by $\langle L_N\sqrt{f},\sqrt{f} \rangle_{\nu_{\rho(\cdot)}^{N}}$ and the quantity
\begin{equation}\label{D_false}
	D_{N}(\sqrt{f},\nu_{\rho(\cdot)}^{N} ):= (D_{N}^{0}+\kappa N^{-\theta} D_{N}^{l}+\kappa N^{-\theta}D_{N}^{r})(\sqrt{f},\nu_{\rho(\cdot)}^{N}). \end{equation}

That relation  is given by the following Lemma, already proved in Section 3.3.1 of \cite{BJGO2}. 
\begin{lem}\label{bound_Dir}
	There exists a constant $C>0$ such that for any positive constant $B$ and any density function $f$ with respect to $\nu_{\rho(\cdot)}^N$, we have that 
	\begin{equation}
	\label{dir_est}
	\begin{split}
	& \frac{\Theta(N)}{NB}\langle L_{N}\sqrt{f},\sqrt{f} \rangle_{\nu_{\rho(\cdot)}^N}\\
	 &\leq -\dfrac{\Theta(N)}{4NB}D_{N}(\sqrt{f},\nu_{\rho(\cdot)}^N) + \dfrac{C\Theta(N)}{N	B}\sum_{x,y\in\Lambda_N}p(y-x)\Big(\rho(\tfrac xN)-\rho(\tfrac yN)\Big)^2\\
	&+ \dfrac{C\kappa \Theta(N)}{N^{\theta+1}B} \sum_{x\in\Lambda_N}\left\lbrace \Big(\rho(\tfrac xN)-\alpha\Big)^2r^{-}_{N}(\tfrac{x}{N}) + \Big(\rho(\tfrac xN)-\beta\Big)^2r^{+}_{N}(\tfrac{x}{N}) \right\rbrace.
	\end{split}
	\end{equation}
\end{lem}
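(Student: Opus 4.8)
The plan is to run the standard Dirichlet-form computation, keeping careful track of the fact that $\nu_{\rho(\cdot)}^N$ is \emph{not} reversible for $L_N$ because the profile $\rho$ is non-constant; this lack of reversibility is precisely what produces the profile-gradient error terms on the right-hand side of \eqref{dir_est}. Setting $g=\sqrt f$ and splitting $\langle L_N g,g\rangle_{\nu_{\rho(\cdot)}^N}$ into its bulk and reservoir contributions according to \eqref{Generator}, I would estimate each bulk bond (and each boundary site) separately and then sum.

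For a bulk bond $(x,y)$, the elementary identity $g(\eta)\big(g(\sigma^{x,y}\eta)-g(\eta)\big)=-\tfrac12\big(g(\sigma^{x,y}\eta)-g(\eta)\big)^2+\tfrac12\big(f(\sigma^{x,y}\eta)-f(\eta)\big)$ gives, after summation,
\[
\langle L_N^0 g,g\rangle_{\nu_{\rho(\cdot)}^N}=-\tfrac12 D_N^0(\sqrt f,\nu_{\rho(\cdot)}^N)+\tfrac14\sum_{x,y\in\Lambda_N}p(y-x)\int\big(f(\sigma^{x,y}\eta)-f(\eta)\big)\,d\nu_{\rho(\cdot)}^N.
\]
Since the Radon--Nikodym cocycle $\nu_{\rho(\cdot)}^N(\sigma^{x,y}\eta)/\nu_{\rho(\cdot)}^N(\eta)$ equals $1$ unless $\eta(x)\neq\eta(y)$, a change of variables reduces the last integral to $\tfrac{\rho(y/N)-\rho(x/N)}{\rho(x/N)(1-\rho(y/N))}\int_{\{\eta(x)=1,\eta(y)=0\}}\big(f(\eta)-f(\sigma^{x,y}\eta)\big)\,d\nu_{\rho(\cdot)}^N$. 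Factoring $f(\eta)-f(\sigma^{x,y}\eta)=\big(g(\eta)-g(\sigma^{x,y}\eta)\big)\big(g(\eta)+g(\sigma^{x,y}\eta)\big)$ and applying Young's inequality with a free parameter lets one split this into a piece bounded by a small multiple of $I_{x,y}(\sqrt f,\nu_{\rho(\cdot)}^N)$ and a remainder of order $(\rho(x/N)-\rho(y/N))^2$; here I would use that $\alpha\le\rho\le\beta$ with $\alpha,\beta\in(0,1)$, so that all prefactors of the form $1/(\rho(1-\rho))$ are uniformly bounded. Choosing the Young parameter small enough that the $I_{x,y}$ contribution is absorbed into $-\tfrac12 D_N^0$, leaving $-\tfrac14 D_N^0$, produces exactly the first error term $C\sum_{x,y}p(y-x)(\rho(x/N)-\rho(y/N))^2$.

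The reservoir terms $\langle L_N^l g,g\rangle_{\nu_{\rho(\cdot)}^N}$ and $\langle L_N^r g,g\rangle_{\nu_{\rho(\cdot)}^N}$ are handled identically, now with the single-site flip $\sigma^x$ and the rate $c_x(\eta;\delta)$, $\delta\in\{\alpha,\beta\}$. The same identity yields $-\tfrac12 D_N^l$ (resp. $-\tfrac12 D_N^r$) plus a cross term; the key difference is that the reference value of the flip is the boundary density $\delta$ rather than a neighbouring site, so the mismatch $\rho(x/N)-\delta$, and not a discrete gradient of $\rho$, drives the error. After the analogous Young step one obtains $-\tfrac14 D_N^l$ and $-\tfrac14 D_N^r$ together with the boundary errors $(\rho(x/N)-\alpha)^2 r_N^-(x/N)$ and $(\rho(x/N)-\beta)^2 r_N^+(x/N)$, the weights $r_N^\pm$ appearing because the sum over the reservoir sites $y$ has already been carried out in \eqref{left_dir_form}--\eqref{right_dir_form}. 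Collecting the three contributions, recalling $D_N=D_N^0+\kappa N^{-\theta}D_N^l+\kappa N^{-\theta}D_N^r$, and multiplying through by $\Theta(N)/(NB)$ gives the claimed inequality.

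I expect the main obstacle to be the bookkeeping in the reservoir estimate: because the jumps are long-range, the rate $c_x(\eta;\delta)$ couples each bulk site $x$ to the whole half-line of reservoir sites, and one must verify that the single-site Young splitting produces precisely the weight $r_N^\pm(x/N)$ in front of $(\rho(x/N)-\delta)^2$ and that the constant $C$ can be chosen uniformly in $N$. The uniform lower bound on $\rho(1-\rho)$, guaranteed by $\alpha,\beta\in(0,1)$, is what renders all constants independent of $N$, and is the reason the degenerate cases $\alpha,\beta\in\{0,1\}$ must be excluded.
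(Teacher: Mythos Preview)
Your approach is correct and is precisely the standard Dirichlet-form computation that the paper invokes by citing \cite{BJGO2} (the paper does not give its own proof of this lemma, only the reference). The decomposition via the identity $g(g'-g)=-\tfrac12(g'-g)^2+\tfrac12(f'-f)$, the change of variables $\eta\mapsto\sigma^{x,y}\eta$ producing the Radon--Nikodym cocycle, and the Young splitting that sacrifices half of the negative Dirichlet term to absorb the cross term are exactly the steps carried out in Section~3.3.1 of \cite{BJGO2}, and your identification of the uniform bound $\alpha\le\rho\le\beta$ (with $\alpha,\beta\in(0,1)$) as the source of $N$-independence of $C$ is also the correct observation.
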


\begin{rem}
 Note that, as a consequence of the previous lemma, 
		for a Lipschitz function $\rho(\cdot)$ such that $\alpha \leq \rho(u)\leq \beta$ and $\rho(0)=\alpha$ and $\rho(1)=\beta$,  we have that
		\begin{equation}
		\label{lipschitz}
		\begin{split}
		\frac{\Theta(N)}{NB}\langle L_{N}\sqrt{f},\sqrt{f} \rangle_{\nu_{\rho(\cdot)}^N} &\leq -\dfrac{\Theta(N)}{4NB}D_{N}(\sqrt{f},\nu_{\rho(\cdot)}^N) +\frac{C\Theta(N)( \kappa +N^{\theta})}{BN^{\gamma+\theta}}.
		\end{split}
		\end{equation}
\end{rem}
	\begin{rem}
		Recasting the bound found in Lemma \ref{bound_Dir} when $\rho(\cdot)$ is equal to a constant in $(0,1)$ we get that: 
		
		\begin{itemize}
		\item [a)] if $\gamma\in(1,2)$, then
		\begin{equation}
		\label{dir_est_const}
		\begin{split}
		\frac{\Theta(N)}{NB}\langle L_{N}\sqrt{f},\sqrt{f} \rangle_{\nu_{\rho(\cdot)}^N} &\leq -\dfrac{\Theta(N)}{4NB}D_{N}(\sqrt{f},\nu_{\rho(\cdot)}^N) +\frac{C\kappa \Theta(N) }{BN^{\theta+1}}
		\end{split}
		\end{equation}
\item 	[b)] if  $\gamma=1$, then
		\begin{equation}
		\label{dir_est_const_gamma_1}
		\begin{split}
		\frac{\Theta(N)}{NB}\langle L_{N}\sqrt{f},\sqrt{f} \rangle_{\nu_{\rho(\cdot)}^N} &\leq -\dfrac{\Theta(N)}{4NB}D_{N}(\sqrt{f},\nu_{\rho(\cdot)}^N) +\frac{C \kappa \Theta(N)\log(N) }{BN^{\theta+1}},
		\end{split}
		\end{equation}
\item [c)] if $\gamma\in(0,1)$, then
		\begin{equation}
		\label{dir_est_const_gamma_less_1}
		\begin{split}
		\frac{\Theta(N)}{NB}\langle L_{N}\sqrt{f},\sqrt{f} \rangle_{\nu_{\rho(\cdot)}^N} &\leq -\dfrac{\Theta(N)}{4NB}D_{N}(\sqrt{f},\nu_{\rho(\cdot)}^N) +\frac{C\kappa \Theta(N)}{B N^{\gamma+\theta}}.
		\end{split}
		\end{equation}
		\end{itemize}
		The difference between the previous bounds is a consequence of the behavior of the sum $\sum_{x\in\Lambda_N}r_N^\pm(\tfrac xN)$ according to the value of  $\gamma$.
\end{rem}

\begin{rem}\label{rem_rev}
   Now we can establish the properties that we need to impose on the profile $\rho(\cdot)$ in order to have a good approximation of $\langle L_N\sqrt f, \sqrt f \rangle_{\nu^N_{\rho(\cdot)}}$ via $D_N(\sqrt f, \nu^N_{\rho(\cdot)})$. This is equivalent to find the condition on $\rho(\cdot)$ such that the last two terms on the right-hand side of \eqref{dir_est} vanish when first $N\to+\infty$ and then $B\to+\infty$. Thanks to the previous remarks  we make the choice:
   \begin{itemize}
       \item $\rho(\cdot)=\rho \in (0,1)$ constant if $\gamma \in (0,1)$ and $\theta \geq 0$   or if $\gamma=1$ and $\theta>0$ or if $\gamma \in (1,2)$ and $\theta\geq \gamma-1$;
       \item $\rho(\cdot)$ Lipschitz such that $\rho(0)=\alpha$ and $\rho(1)=\beta$ in all the other regimes.
   \end{itemize}
\end{rem}

	\subsubsection{Proof of Theorem \ref{Energy_Thm1}}

The proof of the second item is exactly the proof of item ii) of Theorem 3.2 in \cite{BJGO2} and will be omitted. The proof of the first item is almost analogous to the proof of item i) of Theorem 3.2 in \cite{BJGO2} (where the authors proved it for the regime $\gamma \in (1,2)$ and $\theta \leq 0$). We write here the main steps.
	We want to show that $\pi_{\cdot} \in L^{2}(0,T;\mc H ^{\gamma/2})$ $\mathbb{Q}$-almost surely. In order to do that we will show the next bound
	\begin{equation}
	\label{stat_en}
	    \mathbb{E}_{\mathbb{Q}}\Bigg[\int_I \|\pi_t\|^2_{\gamma/2}dt\Bigg]\lesssim |I|,
	\end{equation}
	which implies the result in the statement above.
	
Recall that   the system is speeded up on the time scale $\Theta(N)$, defined by \eqref{timescale}. Let $\rho(\cdot)$ be a macroscopic profile chosen according to Remark \ref{rem_rev}. Let $\epsilon>0$ be a small real number. Let $F \in C_c^{0,\infty} (I\times[0,1]^{2})$, where  $I$ is a  subinterval of $[0,T]$. By the  entropy and Jensen's inequality and Feynman-Kac's formula (see Lemma A.7.2 in \cite{KL}),  we have that  
	\begin{equation}
	\label{eq:varfor1}
	\begin{split}
	&{\mathbb E}_{\mu_N} \Bigg[\int_I  \;  \Theta(N)N^{ -1} \sum_{\substack{x,y\in \Lambda_N\\ |x-y|\geq \epsilon N} } F_{t}( \tfrac{x}{N}, \tfrac{y}{N})p(y-x) (\eta^N_{t}(y)-\eta^N_{t}(x))dt\Bigg] \\
	&\leq C_{0} + \int_{I}\sup_{f} \Bigg\{ \Theta(N)N^{ -1}  \sum_{\substack{x,y\in \Lambda_N\\ |x-y|\geq \varepsilon N} } F_{t}( \tfrac{x}{N}, \tfrac{y}{N})p(y-x) \int(\eta(y)-\eta(x))f(\eta)d\nu^{N}_{\rho(\cdot)}\\&  \quad \quad \quad \quad  \quad \quad \quad \quad \quad \quad \quad \quad +  \Theta(N)N^{ -1} \left\langle  L_N   {\sqrt f} , {\sqrt f} \right\rangle_{\nu^{N}_{\rho(\cdot)}}  \Bigg\}dt
	\end{split}
	\end{equation}
	where the supremum is taken over all  densities $f$ on $\Omega_N$ with respect to $\nu_{\rho(\cdot)}^{N}$. Now, denoting the antisymmetric part of $F_t$ by $F_t^a$ with  $$F_{t}^a (u,v) =\cfrac{1}{2} \Big[ F_{t}(u,v) -F_{t}(v,u) \Big]$$ for all $t\in I$ and $(u,v) \in [0,1]^2$, we can rewrite the first term inside the supremum in \eqref{eq:varfor1} as
	\begin{equation}
	\label{eq_ste1}
	\begin{split}
	\Theta(N)&N^{ -1} \sum_{\substack{x,y \in \Lambda_N\\ |x-y| \ge \epsilon N}}  F_{t}^a( \tfrac{x}{N}, \tfrac{y}{N} ) p(y-x) \int (\eta(y)-\eta(x)) f(\eta) d\nu^{N}_{\rho(\cdot)}\\
	= \, &\Theta(N)N^{ -1}\sum_{\substack{x,y \in \Lambda_N\\ |x-y| \ge \epsilon N}}  F_{t}^a( \tfrac{x}{N},\tfrac{y}{N} )  p(y-x) \int \eta(y) \left( f(\eta) - f(\sigma^{x,y}\eta)\right) d\nu^{N}_{\rho(\cdot)}\\
	+ \, &\Theta(N)N^{ -1}\sum_{\substack{x,y \in \Lambda_N\\ |x-y| \ge \epsilon N}} F_{t}^a( \tfrac{x}{N}, \tfrac{y}{N} ) p(y-x) \int \eta(x) f(\eta)\left(\theta^{x,y}(\eta) -1\right)d\nu^{N}_{\rho(\cdot)},
	\end{split}
	\end{equation}
where the equality follows from  the change of variables $\eta \mapsto \sigma^{x,y}\eta$ and $$\theta^{x,y}(\eta)=\frac{d\nu_{\rho(\cdot)}^{N}(\sigma^{x,y}\eta)}{d\nu_{\rho(\cdot)}^{N}(\eta)}.$$

Now, we have to split the proof in two cases: the case in which $\rho(\cdot)=\rho$ is constant (corresponding to the regimes $(\theta, \gamma) \in [0,\infty)\times (0,1)$, $(\theta, \gamma) \in (0,\infty)\times \{1\}$ and $(\theta, \gamma) \in [\gamma-1,\infty)\times (1,2)$) and the case in which $\rho(\cdot)$ is a Lipschitz profile (corresponding to the regime $(\theta, \gamma) \in (0,\gamma-1)\times (1,2)$).

In the case in which $\rho(\cdot)=\rho$ is constant we have that $\theta^{x,y}(\eta) -1=0$. Therefore, by Young's inequality, the fact that $f$ is a density and $|\eta(y)| \le 1$, we have that, for any $A>0$, \eqref{eq_ste1} is bounded from above by a constant times
	\begin{equation*}
	{c_\gamma A \Theta(N)N^{ -2-\gamma}}   \sum_{\substack{x,y \in \Lambda_N\\ |x-y| \ge \epsilon N}}  \cfrac{\Big( F_{t}^a \Big( \tfrac{x}{N}, \tfrac{y}{N} \Big)\Big)^2}{ | \tfrac{x}{N} -\tfrac{y}{N}|^{1+\gamma}} \; + \; \dfrac{2\Theta(N)N^{ -1}}{A}D^{0}_{N}(\sqrt {f}, \nu^N_{\rho(\cdot)}). 
	\end{equation*}

Then, using \eqref{dir_est_const}, \eqref{dir_est_const_gamma_1} and \eqref{dir_est_const_gamma_less_1}, we can bound
\eqref{eq:varfor1} in the following way:
	\begin{equation}\label{eq:imp}
	\begin{split}
	&{\mathbb E}_{\mu_N} \bigg[ \int_I   \Theta(N)N^{ -1}  \sum_{\substack{x,y\in \Lambda_N\\ |x-y|\geq \varepsilon N} } F^a_{t}( \tfrac{x}{N}, \tfrac{y}{N})p(y-x) (\eta_t^{N}(y)-\eta_t^{N}(x))\,dt\bigg]\\
	&={\mathbb E}_{\mu_N} \left[ \int_I -2c_{\gamma}\langle\pi^N_t,g^N_t\rangle\,dt\right] \lesssim \int _I N^{ -2} \sum_{\substack{x,y \in \Lambda_N\\ |x-y| \ge \epsilon N}}  \cfrac{c_\gamma \left( F_{t}^a ( \tfrac{x}{N}, \tfrac{y}{N} ) \right)^2}{| \tfrac{x}{N} -\tfrac{y}{N}|^{1+\gamma}}  dt+ |I|( \kappa  +1),
	\end{split}
	\end{equation}
	where the function $g^{N}$ is defined on $ I\times[0,1]$ by
	$$g_t^{N} (u) = \cfrac{1}{N-1} \sum_{y \in \Lambda_N} \, \textbf{1}_{ \big|\tfrac{y}{N}- u \big| \ge \epsilon }\cfrac{F_{t}^a \big(u, \tfrac{y}{N}\big)}{\vert u -\tfrac{y}{N}\vert^{1+\gamma}}$$
	and it is a discretization of the smooth function  $g$ defined on $(t,u) \in I\times[0,1]$  by
	$$ \quad g_{t}(u) = \int_0^1\textbf{1}_{\{ |v-u| \ge \epsilon\}} \cfrac{F_{t}^a (u,v)}{|u-v|^{1+\gamma}} \, dv.$$
	Now, we can proceed exactly in the same way as in the proof of Theorem 3.2 of \cite{BJGO2} to conclude that
	\begin{equation}
	\label{eq:A4}
	\begin{split}
	&{\bb E}_{\mathbb{Q}} \left[ \sup_F \left\{ \int _{I}  \iint_{Q_{\epsilon} }\cfrac{(\pi_{t} (v) -\pi_{t} (u)) F _{t}(u,v) }{|u-v|^{1+\gamma}}\;  - C \cfrac{ \big( F _{t}(u,v) \big)^2 }{|u-v|^{1+\gamma}} \; du dv dt    \right\} \right] \\
	& \lesssim|I|( \kappa+1),
	\end{split}
	\end{equation}
	where $Q_{\epsilon}=\{(u,v)\in [0,1]^2\; ; \; |u-v| \ge \epsilon\}$. That bound, passing to the limit $ \epsilon\to 0$, by the monotone convergence Theorem, implies the statement \eqref{stat_en}.

The case in which $\rho(\cdot):[0,1]\to [\alpha,\beta]$ is Lipschitz continuous and such that $\rho(0)=\alpha$ and $\rho(1)=\beta$ can be analyzed exactly in the same way as it was proved in Theorem 3.2 of \cite{BJGO2} therefore we will omit this part of the proof.

\subsection{Characterization of the limit points}
In this subsection we characterize all limit points $\mathbb{Q}$ of the sequence $\lbrace\mathbb{Q}^{N} \rbrace _{N\geq 1}$, {which we know that exist from Proposition \ref{Tightness}. Despite our approach being standard (see \cite{BGJO, BJGO2} ), we decided to write the arguments for the convenience of the reader. Let us assume without loss of generality, that  $\lbrace\mathbb{Q}^{N} \rbrace _{N\geq 1}$ converges to $\mathbb{Q}$, as $N$ goes to infinity. Since there is at most one particle per site, it is easy to show that $\mathbb{Q}$ is concentrated on trajectories {of measures} absolutely continuous with respect to the Lebesgue measure, i.e. $\pi_{t}^{\kappa}(du)=\rho_{t}^{ \kappa}(u)du$ (for details see \cite{KL}). In Proposition \ref{prop:weak_sol_car} below we prove, for each range of $\theta$ and $\gamma$, that  $\mathbb{Q}$ is concentrated on trajectories of measures whose density {$\rho_{t}^{ k}$}
satisfies a weak form of the corresponding hydrodynamic equation. Moreover, we have seen in Theorem \ref{Energy_Thm1} that $\mathbb{Q}$ is concentrated on trajectories of measures whose density satisfies the energy estimate, i.e. $\rho^{ \kappa}\in L^{2}(0,T;\mc H^{\gamma/2})$ when $\theta\geq 0$ and $\gamma \in (0,2)$. 
%

\begin{prop}
	\label{prop:weak_sol_car}
	If $\bb Q$ is a limit point of $ \{\bb Q^{N}\}_{N\geq 1}$  then 
	\begin{eqnarray} \nonumber
	&&  \bb Q\left(\pi _{\cdot}\in \mc D([0,T],{\mc M^+}): F_{\theta,\gamma}(t,\rho^{ \kappa},G,g)=0, \forall t \in [0,T], \for] \forall G \in C_{\theta,\gamma} \right)=1, 
		\end{eqnarray}	
	for each $\theta$ and each $\gamma$, where $F_{\theta,\gamma}$ is defined by
	\begin{equation}
	\begin{split}
	&F_{\theta,\gamma}(t,\rho^{\kappa},G,g)\\
	&\quad :=\begin{cases} F_{Reac}(t, \rho^{ \kappa},G,g) &\text { if } \theta<0,   \gamma \in (0,1] ;\\
	F_{RD}(t, \rho^{\kappa},G,g) & \text{ if } \theta=0, \gamma \in (0,1];\\
	F_{Neu}(t, \rho^{0},G,g) & \text{ if } \theta>0, \gamma \in (0,1] \text { or } \theta>\gamma-1, \gamma \in (1,2);\\
	F_{Rob}(t, \rho,G,g) & \text{ if } \theta=\gamma-1, \gamma \in (1,2);\\
	F_{Dir}(t, \rho,G,g)  & \text{ if } \theta \in (0,\gamma-1), \gamma \in (1,2).
\end{cases}
\end{split}
	\end{equation}
where $C_{\theta,\gamma}=C^{1,2} ([0,T]\times[0,1])$ if $\theta >0$ and $\gamma \in (0,1]$ or if $\theta \geq \gamma-1$ and $\gamma \in (1,2)$, while in all the other regimes it is $C_{\theta,\gamma}=C_c^{1,2} ([0,T]\times[0,1])$.
\end{prop}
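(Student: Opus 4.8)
The plan is to fix a limit point $\mathbb{Q}$ and, assuming without loss of generality that the whole sequence $\{\mathbb{Q}^N\}$ converges to it, to prove for each pair $(\theta,\gamma)$ that $F_{\theta,\gamma}(t,\rho^\kappa,G,g)=0$, $\mathbb{Q}$-almost surely, for every $t\in[0,T]$ and every test function $G$ in the class $C_{\theta,\gamma}$. The starting point is Dynkin's formula \eqref{Dynkin'sFormula}: it suffices to show that the martingale $M_t^N(G)$ converges to $0$ in $L^2(\mathbb{P}_{\mu_N})$ and to identify the limit of the integral term $\int_0^t\Theta(N)L_N\langle\pi_s^N,G\rangle\,ds$. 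The first point follows from the quadratic-variation bound already established in the proof of tightness, see \eqref{TC2}, which shows that $\mathbb{E}_{\mu_N}[(M_t^N(G))^2]\to 0$. Hence the analysis reduces to the integrand \eqref{gen_action}, which splits into a bulk term involving the discrete operator $\mathcal{L}_N$ and two reservoir terms carrying $r_N^\pm$.

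First I would treat the bulk term. Writing it as in \eqref{fraclap} and invoking the $L^1$ convergence of $N^\gamma\mathcal{L}_N G$ to $\mathbb{L}G$ (Lemma \ref{L1}), the contribution of $N^\gamma\mathcal{L}_N G-\mathbb{L}G$ vanishes because $|\eta_s^N(x)|\le 1$. Since $\mathbb{L}G$ need not be continuous, I would approximate it by a continuous $\mathbb{L}_\epsilon G$ with $\|\mathbb{L}_\epsilon G-\mathbb{L}G\|_{L^1(0,1)}\to 0$ and pass to the joint limit in $N$ and $\epsilon$, using the assumed convergence $\pi_s^N\to\rho_s^\kappa(u)\,du$, to obtain the term $\int_0^t\langle\rho_s^\kappa,\mathbb{L}G\rangle\,ds$ of \eqref{fraclapreacdiff}, which is common to all the diffusive regimes.

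The reservoir terms are handled regime by regime, following the heuristics. In the reaction and reaction--diffusion regimes ($\theta\le 0$), after discarding the vanishing error coming from $N^\gamma r_N^\pm-r^\pm$ and recognizing the remaining sum as a Riemann sum, these terms converge to the expression \eqref{cont2} producing the $V_0,V_1$ contributions. In the Neumann regime the prefactor of order $N^{\gamma-\theta-1}$ forces them to vanish, while in the Robin regime ($\theta=\gamma-1$) a Taylor expansion of $G$ around the endpoints together with $\sum_x r_N^-(\tfrac xN)\to m$ yields the boundary flux $\hat\kappa m(\alpha-\rho_s^\kappa(0))$ and its counterpart, matching \eqref{eq:Robin Neumann}. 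To turn each of these heuristic identities into genuine statements about $\rho^\kappa$ I would invoke the replacement lemmas, which allow replacing $\eta_s^N(x)$ by its average $\overrightarrow{\eta}_s^{\epsilon N}(0)$ over a microscopic box; these are controlled through the Dirichlet-form estimate of Lemma \ref{bound_Dir} and the entropy bound \eqref{H}, and at the boundary through the estimate of Remark \ref{robin_rem}. The energy estimate (Theorem \ref{Energy_Thm1}) guarantees $\rho^\kappa\in L^2(0,T;\mathcal{H}^{\gamma/2})$, so that the boundary traces appearing in the Robin formulation are well defined.

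The main obstacle is the Dirichlet regime $\theta\in(0,\gamma-1)$, $\gamma\in(1,2)$, where beyond $F_{Dir}=0$ one must also establish item (iii) of Definition \ref{Def. Dirichlet0 Condition}, namely $\rho_t(0)=\alpha$ and $\rho_t(1)=\beta$ for a.e. $t$. This cannot be read off from the weak formulation and requires the renormalization scheme of Section \ref{sec:fixing}: first replace the occupation at site $1$ by its average over a box of size $\ell_0$, then double the box size repeatedly until reaching $\lfloor\epsilon N\rfloor$, controlling at each step the change of energy caused by moving particles through the estimate of Lemma \ref{posMPL}, and finally replace the box average by $\alpha$. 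The delicate point, and the reason this is the hard part, is that long jumps make a large number of configurations contribute to the energy change, so the cost of each doubling step must be estimated with care to ensure that the accumulated error still vanishes in the limit $N\to\infty$ followed by $\epsilon\to 0$.
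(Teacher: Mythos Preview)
Your proposal is correct and follows essentially the same approach as the paper: Dynkin's formula, vanishing of the martingale via its quadratic variation, the $L^1$ convergence of Lemma \ref{L1} for the bulk term, and regime-by-regime treatment of the reservoir terms using the heuristics of Section \ref{heursitcs} together with the replacement lemmas. Two minor remarks: the passage from statements about $\mathbb{P}_{\mu_N}$ to statements about $\mathbb{Q}$ goes through Portmanteau's theorem applied to an open set in the Skorohod topology (Proposition A.3 of \cite{FGN}), which you leave implicit; and the verification of item (iii) in the Dirichlet regime via Proposition \ref{prop:fixing}, while essential for the full characterization, is strictly speaking not part of the statement of this proposition (which only asserts $F_{Dir}=0$) and is handled separately in the paper.
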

\begin{proof}
	The proof is very similar to the one in \cite{BJGO2} and for that reason some details are omitted.
	Note that in order to prove the proposition, it is enough to verify, for $\delta > 0$ and $ G$ in the corresponding space of test functions,  that  
	\begin{equation}
	\label{Qprob}
	\bb Q\left(\pi _{\cdot}\in \mc D([0,T],{\mc M^+}): \sup_{0\le t \le T} \left\vert F_{\theta,\gamma}(t,\rho^{ \kappa},G,g) \right\vert>\delta\right)=0,
	\end{equation}
	for each $\theta$ and each $\gamma$ in the respective domains.
	
	We have that
	\begin{equation}
	\label{def_F_theta}
	\begin{split}
	&F_{\theta,\gamma}(t, \rho^{ \kappa},G,g)\\
	&=\left\langle \rho^{ \kappa}_{t},  G_{t} \right\rangle -\left\langle g,   G_{0}\right\rangle - \int_0^t\left\langle \rho^{ \kappa}_{s},\Big(\partial_s + \mathbb{1}_{\{\theta\geq 0 \}}\bb L \Big) G_{s}  \right\rangle ds \\
	&+ \mathbb{1}_{\{\theta\leq 0\}}\mathbb{1}_{\{\gamma \in (0,1] \}}  \kappa  \int^{t}_{0} \left\langle \rho_{s}^{ \kappa}, G_s \right\rangle_{ V_1 } ds 
	- \int^{t}_{0}\left\langle G_s , V_0\right\rangle ds  \\
	&- \mathbb{1}_{\{ \theta=\gamma-1 \}}\mathbb{1}_{\{\gamma \in (1,2)\}}  \kappa \int_0^t mG_s(0)(\alpha-\rho^{\kappa}_s(0))+mG_s(1)(\beta-\rho^{\kappa}_s(1))ds.
	\end{split}   
	\end{equation}
	From here on, in order to simplify notation, we will erase $\pi_\cdot$ from the sets that we have to look at. 
	By \eqref{def_F_theta}  we can bound from above \eqref{Qprob} by the sum of
	\begin{equation}\label{RD6}
	\bb Q\left(  \sup_{0\le t \le T} \left|F_{\theta,\gamma}(t,\rho^{ \kappa},G,\rho_{0}) \right|>\dfrac{\delta}{2}\right)
	\end{equation}
	and
	\begin{equation}\label{1111}
	\bb Q \left(  \left| \left\langle \rho_{0}-g, G_{0}\right\rangle \right|>\dfrac{\delta}{2}\right).
	\end{equation}
	We note that \eqref{1111} is equal to zero since $\mathbb Q$ is a limit point of $\{\mathbb Q^N\}_{N\geq 1}$ and $\mathbb Q^N$ is induced by $\mu_N$ which is associated to $g(\cdot)$. {Now we deal with} \eqref{RD6}.
	 Note that by Proposition A.3 of \cite{FGN}, the set inside the probability in \eqref{RD6} is an open set in the Skorohod topology. Therefore, from Portmanteau's Theorem we bound \eqref{RD6} from above by
	\begin{equation*}
	\liminf_{N\to\infty}\,\bb Q^{N}\left( \sup_{0\le t \le T} \left|F_{\theta,\gamma}(t,\rho^{ \kappa},G,\rho_{0}) \right|>\dfrac{\delta}{2}\right).
	\end{equation*}
	Summing and subtracting $\displaystyle\int_{0}^{t} \Theta(N) L_{N}\langle \pi_{s}^{N},G_{s}\rangle ds$ to the term inside the previous absolute {value}, recalling \eqref{Dynkin'sFormula}, \eqref{def_F_theta} and  the definition of $\mathbb Q^N$,  we can bound the previous probability   from above by the sum of  
	\begin{equation*}
	\bb P_{\mu_{N}} \left(\sup_{0\le t \le T} \left\vert M_{t}^{N}(G) \right\vert>\dfrac{\delta}{4}\right)
	\end{equation*}
	and
	\begin{equation}
	\label{CLP2}
	\begin{split}
	&\bb P_{\mu_{N}}  \bigg( \sup_{0\le t \le T} \Big| \int_{0}^{t} \Theta(N) L_{N}\langle \pi_{s}^{N},G_{s}\rangle ds  - \int_0^t\left\langle \rho^{ \kappa}_{s}, \mathbb{1}_{\{\theta\geq 0 \}}\bb L  G_{s}  \right\rangle ds \\
	&+\mathbb{1}_{\{\theta\leq 0\}}\mathbb{1}_{\{\gamma \in (0,1] \}}  \kappa \int^{t}_{0} \left\langle \rho_{s}^{ \kappa}, G_s \right\rangle_{ V_1 } ds 
	-  \mathbb{1}_{\{\theta\leq 0\}}\mathbb{1}_{\{\gamma \in (0,1] \}}  \kappa \int^{t}_{0}\left\langle G_s , V_0\right\rangle ds  \\- &\mathbb{1}_{\{ \theta=\gamma-1 \}}\mathbb{1}_{\{\gamma \in (1,2)\}}  \kappa \int_0^t mG_s(0)(\alpha-\rho^{\kappa}_s(0))+mG_s(1)(\beta-\rho^{\kappa}_s(1))ds \Big|>\dfrac{\delta}{4}\bigg). 
	\end{split}
	\end{equation}
By Doob's inequality we have that
	\begin{equation*}
	\begin{split}
	&\bb P_{\mu_{N}} \left(\sup_{0\le t \le T} \left\vert M_{t}^{N}(G) \right\vert>\dfrac{\delta}{4}\right) \\
	&\lesssim   \dfrac{1}{\delta^{2}} \bb E_{\mu _{N}} \left[\int_{0}^{T}\Theta(N)\left[ L_{N} \langle\pi^{N}_{s},G \rangle^{2}- 2\langle\pi^{N}_{s},G \rangle L_{N} \langle\pi^{N}_{s},G \rangle\right]ds \right].
	\end{split}
	\end{equation*}
	In the proof of Proposition \ref{Tightness} we have proved that the term inside the time integral in the previous expression vanishes, as $N$ goes to infinity. It remains to prove that (\ref{CLP2}) vanishes as well when we send $N$ to infinity. Recalling (\ref{gen_action}), we can bound (\ref{CLP2}) from above by the sum of the following terms
	\begin{equation}
	\label{DC1}
	\bb P_{\mu_{N}}  \left(\sup_{0\le t \le T} \left| \int_{0}^{t}\cfrac{\Theta(N)}{N-1} \sum_{x\in \Lambda_N}\mathcal{L}_NG_{s}(\tfrac{x}{N})\eta_s^{N}(x) ds-\int_0^t\left\langle \rho^{\kappa}_{s}, \mathbb{1}_{\{\theta\geq 0 \}}\bb L  G_{s}  \right\rangle ds \right|>\dfrac{\delta}{2^{4}}\right),
	\end{equation}
	\begin{multline}
	\label{DC2}
	\bb P_{\mu_{N}}\Bigg(\sup_{0\le t \le T} \bigg|   \int_{0}^{t} \bigg\{ \dfrac{  \kappa \Theta(N)}{N^{\theta}(N-1)} \sum_{x \in \Lambda_N}  (G_{s} r_{N}^{-})(\tfrac{x}{N}){(\alpha-\eta_s^{N}(x))} \\
-\mathbb{1}_{\{\theta\leq 0\}}\mathbb{1}_{\{\gamma \in (0,1] \}}  \kappa \int_{0}^{1}  (G_{s}r^{-})(u)(\alpha - \rho_{s}^{ \kappa}(u))du \\
-\mathbb{1}_{\{ \theta=\gamma-1 \}}\mathbb{1}_{\{\gamma \in (1,2)\}}  \kappa mG_s(0)(\alpha-\rho^{\kappa}_s(0)) \bigg\} ds \bigg| > \dfrac{\delta}{2^{4}}\Bigg).
	\end{multline}
	and
	\begin{multline}
	\label{DC3}
	\bb P_{\mu_{N}}\Bigg(\sup_{0\le t \le T} \bigg|   \int_{0}^{t} \bigg\{ \dfrac{  \kappa \Theta(N)}{N^{\theta}(N-1)} \sum_{x \in \Lambda_N}  (G_{s} r_{N}^{+})(\tfrac{x}{N}){(\beta-\eta_s^{N}(x))} \\
-\mathbb{1}_{\{\theta\leq 0\}}\mathbb{1}_{\{\gamma \in (0,1] \}}  \kappa \int_{0}^{1}  (G_{s}r^{+})(u)(\beta - \rho_{s}^{ \kappa}(u))du \\
-\mathbb{1}_{\{ \theta=\gamma-1 \}}\mathbb{1}_{\{\gamma \in (1,2)\}}  \kappa mG_s(1)(\beta-\rho^{\kappa}_s(1)) \bigg\} ds \bigg| > \dfrac{\delta}{2^{4}}\Bigg).
	\end{multline}
For $\theta\geq 0$, since $\Theta(N)=N^{\gamma}$ and Lemma \ref{L1} holds for any $G$ at least $C^2$ in the space variable,  applying Markov's inequality we have that (\ref{DC1}) goes to $0$, as $N$ goes to infinity. For $\theta < 0$ and$ \gamma \in (0,1]$
	 we also have that \eqref{DC1} vanishes as $N$ goes to infinity thanks to Markov's inequality and the same analysis that we did in Subsection \ref{reactionheur}.
	The boundary terms \eqref{DC2} and \eqref{DC3} can be treated in a similar way, therefore we explain in details why they vanish just referring to \eqref{DC2}. In the case $ \gamma \in (0,1]$ and $\theta \leq 0$, it is enough to use Markov's inequality and what we did in Subsection \ref{reactionheur} to show that \eqref{DC2} vanishes, as $N$ goes to infinity. In the case $\gamma \in (1,2)$ and $\theta =\gamma-1$ we again use Markov's inequality and then we repeat the computations we did in Subsection \ref{robinheur} to see that also in this case \eqref{DC2} vanishes, as $N$ goes to infinity. In all the other regimes that we are considering the only term we have to estimate using Markov's inequality is
	\begin{equation}
	\mathbb{E}_{\mu_N}\Bigg[ \bigg|   \int_{0}^{t} \bigg\{ \dfrac{ \kappa N^{\gamma}}{N^{\theta}(N-1)} \sum_{x \in \Lambda_N}  (G_{s} r_{N}^{-})(\tfrac{x}{N}){(\beta-\eta_s^{N}(x))}ds\bigg| \Bigg]
	\end{equation}
	 and it vanishes, as $N$ goes to infinity, thanks to the analysis that we did in Section \ref{heursitcs} relative to these regimes.
	 This finishes the proof of Proposition \ref{prop:weak_sol_car}. 
\end{proof}

\subsection{Conclusion}  Note that Proposition \ref{Tightness}, Theorem  \ref{Energy_Thm1} and Proposition \ref{prop:weak_sol_car} just proved, imply that the sequence $\{\mathbb{Q}^N\}_{N\geq1}$ converges up to subsequences to a measure $\mathbb{Q}$ concentrated on an absolutely continuous measure with respect to the Lebesgue measure of density $\rho^{\kappa}$, where $\rho^{\kappa}$ is the weak solution of the hydrodynamic equation chosen accordingly to the values of the parameters of the model (as stated in Theorem \ref{theo:hydro_limit}). Moreover, thanks to Proposition \ref{uniqueness} which grants the uniqueness of the weak solutions, we can conclude that this limiting measure is unique and this proves completely Theorem \ref{theo:hydro_limit}.

\section{Uniqueness of weak solutions}
\label{unique_sec}

In this section we prove uniqueness of the weak solutions we defined in Subsection \ref{subsec:hyd_eq}. Since the PDEs are linear, the strategy is standard. We consider two weak solutions starting from the same initial condition, take their difference and write down the  integral formulation for the difference of the two solutions. Then, chose a proper test function, plug it into the integral formulation and from there obtain that the difference is $0$ in some topology, which implies the uniqueness result we need.
 
\subsection{Uniqueness of  weak solutions of \eqref{eq:Neumann Equation}}
Fix $T>0$ and $\gamma \in (0,2)$. Consider $\rho^1$ and $\rho^2$ two weak solutions of  \eqref{eq:Neumann Equation}  starting from the same initial condition and denote by $\rho$ their difference: $\rho=\rho^1-\rho^2 \in L^2(0,T;\mathcal{H}^{\gamma/2})$.  The set $C^{1,\infty}([0,T]\times (0,1))$ is dense in $L^2(0,T;\mathcal{H}^{\gamma/2})$. Therefore we can consider a sequence $\{G_n\}_{n \in \NN}\subset C^{1,\infty}([0,T]\times (0,1))$  converging to $\rho$ with respect to the norm of $L^2(0,T;\mathcal{H}^{\gamma/2})$. Now, we  consider a special set of test functions, defined as $H_n(t,u):=\int_t^TG_n(s,u)ds$, for any $t \in [0,T]$ and any $u \in [0,1]$. It is easy to check that $H_n$ are suitable test functions. Now we state the Lemma 6.1 of \cite{BJGO2} adapting its proof to our case.

	\begin{lem}
	\label{6.1}
		Let $\{H_n\}_{n \in \NN}$ defined as above. Then we have
		\begin{enumerate}
			\item $\lim_{n\rightarrow \infty}\int_0^T \langle \rho_s,\partial_s H_n(s,\cdot)\rangle ds = - \int_0^T\|\rho_s\|^2ds$;
			\item $\lim_{n\rightarrow \infty}\int_0^T \langle \rho_s,\mathbb L H_n(s,\cdot)\rangle ds = -\cfrac{1}{2} \Big| \Big |\int_0^T\rho_sds\Big|\Big|_{\gamma/2}^2$;
		\end{enumerate}
	\end{lem}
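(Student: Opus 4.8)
\emph{Plan.} The plan is to exploit the defining relation $\partial_s H_n(s,\cdot)=-G_n(s,\cdot)$ together with the convergence $G_n\to\rho$ in $L^2(0,T;\mathcal H^{\gamma/2})$, reducing both identities to the continuity of the relevant bilinear forms followed by an elementary symmetrization in the time variable. For the first identity I would simply differentiate the primitive: since $H_n(s,u)=\int_s^T G_n(r,u)\,dr$, the fundamental theorem of calculus gives $\partial_s H_n(s,\cdot)=-G_n(s,\cdot)$, so that $\int_0^T\langle\rho_s,\partial_s H_n(s,\cdot)\rangle\,ds=-\int_0^T\langle\rho_s,G_n(s,\cdot)\rangle\,ds$. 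Because the $\mathcal H^{\gamma/2}$-norm dominates the $L^2$-norm, the convergence $G_n\to\rho$ holds a fortiori in $L^2(0,T;L^2([0,1]))$, and Cauchy--Schwarz in that space yields $-\int_0^T\langle\rho_s,G_n(s,\cdot)\rangle\,ds\to-\int_0^T\|\rho_s\|^2\,ds$, which is (1).

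For the second identity I would first rewrite the pairing with $\mathbb L$ as a seminorm pairing. Since $G_n\in C^{1,\infty}$, each $H_n(s,\cdot)$ is smooth in space, hence $\mathbb L H_n(s,\cdot)\in L^1([0,1],du)$ by the remark following Proposition \ref{prop:ip-guan}; moreover $\rho_s\in\mathcal H^{\gamma/2}$ satisfies the integrability condition \eqref{ip_ip} relative to $H_n(s,\cdot)$, by Cauchy--Schwarz for $\langle\cdot,\cdot\rangle_{\gamma/2}$. Thus the second statement of Proposition \ref{prop:ip-guan} applies and gives $\langle\rho_s,\mathbb L H_n(s,\cdot)\rangle=-\langle\rho_s,H_n(s,\cdot)\rangle_{\gamma/2}$. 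Using that $r\mapsto G_n(r,\cdot)$ is a smooth, hence Bochner-integrable, $\mathcal H^{\gamma/2}$-valued map and that $\langle\cdot,\cdot\rangle_{\gamma/2}$ is a bounded bilinear form, I can pull the time integral defining $H_n$ through the seminorm to obtain
\begin{equation*}
\int_0^T\langle\rho_s,\mathbb L H_n(s,\cdot)\rangle\,ds=-\iint_{0\le s\le r\le T}\langle\rho_s,G_n(r,\cdot)\rangle_{\gamma/2}\,dr\,ds.
\end{equation*}

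Replacing $G_n(r,\cdot)$ by $\rho_r$ produces an error bounded, via Cauchy--Schwarz for $\langle\cdot,\cdot\rangle_{\gamma/2}$, by $\big(\int_0^T\|\rho_s\|_{\gamma/2}\,ds\big)\big(\int_0^T\|G_n(r,\cdot)-\rho_r\|_{\gamma/2}\,dr\big)$; the first factor is finite because $\rho\in L^2(0,T;\mathcal H^{\gamma/2})$, and the second tends to $0$ as $n\to\infty$ since $\|\cdot\|_{\gamma/2}\le\|\cdot\|_{\mathcal H^{\gamma/2}}$. It then remains to identify $\iint_{0\le s\le r\le T}\langle\rho_s,\rho_r\rangle_{\gamma/2}\,dr\,ds$. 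Writing $R:=\int_0^T\rho_s\,ds$, bilinearity gives $\int_0^T\!\int_0^T\langle\rho_s,\rho_r\rangle_{\gamma/2}\,dr\,ds=\|R\|_{\gamma/2}^2$, and the symmetry of $\langle\cdot,\cdot\rangle_{\gamma/2}$ shows that the integrals over $\{s\le r\}$ and over $\{s\ge r\}$ coincide, so each equals $\tfrac12\|R\|_{\gamma/2}^2$, establishing (2). The only genuinely delicate points are the integration by parts $\langle\rho_s,\mathbb L H_n\rangle=-\langle\rho_s,H_n\rangle_{\gamma/2}$ for the merely $\mathcal H^{\gamma/2}$-regular factor $\rho_s$ (handled by Proposition \ref{prop:ip-guan}) and the interchange of the time integral with the seminorm; both become routine once the boundedness of $\langle\cdot,\cdot\rangle_{\gamma/2}$ on $\mathcal H^{\gamma/2}\times\mathcal H^{\gamma/2}$ is invoked, and the time-symmetrization is the one step specific to this lemma that produces the factor $\tfrac12$.
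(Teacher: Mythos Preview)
Your proposal is correct and follows essentially the same route as the paper: for item (1) both differentiate the primitive and pass to the limit in $L^2(0,T;L^2)$; for item (2) both invoke Proposition~\ref{prop:ip-guan} to convert $\langle\rho_s,\mathbb L H_n(s,\cdot)\rangle$ into $-\langle\rho_s,H_n(s,\cdot)\rangle_{\gamma/2}$, then unfold $H_n$ as a time integral and symmetrize in $(s,r)$ to obtain the factor $\tfrac12$. The only noticeable difference is the verification of hypothesis~\eqref{ip_ip}: the paper uses that $\rho$ is uniformly bounded and $H_n(s,\cdot)$ is Lipschitz, whereas you use Cauchy--Schwarz for $\langle\cdot,\cdot\rangle_{\gamma/2}$ together with $\rho_s,H_n(s,\cdot)\in\mathcal H^{\gamma/2}$; your argument is slightly cleaner and works uniformly across $\gamma\in(0,2)\setminus\{1\}$.
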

	\begin{proof}
		    The proof of item (1) is completely analogous to the one which can be found in \cite{BJGO2}. Let us now prove item (2). Since $H_n(s,\cdot)$ is at least $C^2([0,1])$ and $\rho(s, \cdot) \in L^{\infty}([0,1]) \cap \mathcal{H}^{\gamma/2}$ for almost every $s \in [0,T]$,  we can apply Proposition \ref{prop:ip-guan} and get that 
	    \begin{equation}
	       \int_0^T \langle \rho_s,\mathbb{L} H_n(s,\cdot)\rangle ds = \int_0^T \langle \rho_s, H_n(s,\cdot)\rangle_{\gamma/2} ds.
	    \end{equation}
	 The rest of the proof of item (2) is then analogous to the one given in \cite{BJGO2}.
	\end{proof}

Thanks to this Lemma, we just have to plug in \eqref{eq:Dirichlet Neumann} the test function $H_n$, and then pass to the limit when $n$ goes to infinity, as done in \cite{BJGO2}, to get
\begin{equation*}
\int_0^T||\rho_s||^2ds+\cfrac{1}{2}\Big| \Big |\int_0^T\rho_sds\Big|\Big|_{\gamma/2}^2=0.
\end{equation*}
This implies that for almost every time $s \in [0,T]$ the function $\rho_s=0$ and this means that $\rho^1$ coincides with $\rho^2$ almost at every time. So, the weak solution of \eqref{eq:Neumann Equation} is unique.

\subsection{Uniqueness of weak solutions of  \eqref{eq:Dirichlet Equation2} for $\gamma\neq 1$.}

The uniqueness of the solution to \eqref{eq:Dirichlet Equation2} in the case $\gamma \in (1,2)$  has been proved in \cite{BGJO}. Hence we assume now $\gamma \in (0,1)$ and prove the uniqueness of the weak solution to \eqref{eq:Dirichlet Equation2}. The proof relies on the following Hardy's inequality proved in Corollary 2.4 in \cite{CS-Hardy}:
 \begin{equation}
\label{eq:hardy-01}
    \forall H\in C_c^\infty ((0,1)), \quad 
    \| H\|_{V_1} \lesssim \| H\|_{{\mc H}^{\gamma/2}}. 
\end{equation}
Since $C_c^\infty ((0,1))$ is dense in  $L^2_{V_1}([0,1])$ and in ${\mc H}_0^{\gamma/2}$,  and since for $\gamma < 1$ we have that  ${\mc H}_0^{\gamma/2} = {\mc H}^{\gamma/2}$, then  \eqref{eq:hardy-01} holds for any $H\in L^2_{V_1}([0,1]) \cap {\mc H}^{\gamma/2}$. Once this observed, the proof follows the same arguments as in \cite{BGJO} for the case $\gamma \in (1,2)$.

\begin{rem}
We observe  that the crucial difference between the case $\gamma \in (1,2)$ and the case  $\gamma \in (0,1)$ is that in the former case,  the weak solutions are assumed to satisfy the Dirichlet boundary conditions so that the difference of two weak solutions belongs to ${\mc H}_{0}^{\gamma/2}$ and Hardy's inequality can be used for the difference. For $\gamma \in (0,1) $, we do not know if weak solutions satisfy, in the strong sense, Dirichlet boundary conditions. Hence the difference of two weak solutions does not necessarily vanishes at the boundary (we do not even know if it has a continuous representative). Fortunately if  $\gamma \in (0,1) $, ${\mc H}_0^{\gamma/2}={\mc H}^{\gamma/2}$, so that it is not a real problem.    
\end{rem}

\begin{rem}
We observe that the uniqueness in the case $\gamma=1$ is still open. The crucial point in the proof presented  above is the fractional Hardy's inequality  \eqref{eq:hardy-01}, but we did not find in the literature a result indicating that  \eqref{eq:hardy-01} holds for $\gamma=1$. 
\end{rem}

\subsection{Uniqueness of weak solutions of \eqref{eq:Dirichlet0 Equation}}

We have $\gamma \in (1,2)$. Fix $T>0$. Consider $\rho^1$ and $\rho^2$ two weak solutions of  \eqref{eq:Dirichlet0 Equation}  starting from the same initial condition and denote by $\rho$ their difference: $\rho=\rho^1-\rho^2 \in L^2(0,T;\mathcal{H}^{\gamma/2})$. Since the continuous representatives of $\rho^1$ and $\rho^2$ satisfy Dirichlet boundary conditions, we have in fact that $\rho \in L^2(0,T;\mathcal{H}_0^{\gamma/2})$. The set $C_c^{1,\infty}([0,T]\times (0,1))$ is dense in $L^2(0,T;\mathcal{H}_0^{\gamma/2})$. Therefore we can consider a sequence $\{G_n\}_{n \in \NN}$ of of functions in $C_c^{1,\infty}([0,T]\times (0,1))$  converging to $\rho$ with respect to the norm of $L^2(0,T;\mathcal{H}_0^{\gamma/2})$. Now, we  consider a special set of test functions, defined as $H_n(t,u):=\int_t^TG_n(s,u)ds$, for any $t \in [0,T]$ and any $u \in [0,1]$. It is easy to check that $H_n$ are suitable test functions that can be used in \eqref{eq:Dirichlet Neumann}. Lemma \ref{6.1} holds and we get then 
\begin{equation*}
    \int_0^T||\rho_s||^2ds+\cfrac{1}{2}\, \left\Vert \int_0^T\rho_sds\right\Vert_{\gamma/2}^2=0
\end{equation*}
which implies that $\rho =0$ and shows uniqueness. 

\subsection{Uniqueness of weak solutions of \eqref{eq:Robin Equation}}

We have $\gamma \in (1,2)$. Fix $T>0$. Consider two weak solutions of \eqref{eq:Robin Equation} denoted by $\rho^{\hat\kappa,1}$ and $\rho^{\hat\kappa,2}$, starting from the same initial condition. Let the difference be $\rho^{\hat\kappa}=\rho^{\hat\kappa,1}-\rho^{\hat\kappa,2}$. We have that $\rho^{\hat\kappa} \in L^2(0,T;\mathcal{H}^{\gamma/2})$. Since $\gamma>1$ we may consider a continuous representative (for a.e. time) of $\rho^{\hat \kappa}$. The set $C^{1,\infty}([0,T]\times (0,1))$ is dense in $L^2(0,T;\mathcal{H}^{\gamma/2})$. Therefore, we can consider a sequence $\{G^{\hat\kappa}_n\}_{n \in \NN}\subset C^{1,\infty}([0,T]\times (0,1))$ converging to $\rho^{\hat \kappa}$ with respect to the norm of $L^2(0,T;\mathcal{H}^{\gamma/2})$, and we use as test functions $H^{\hat \kappa}_n(t,u):=\int_t^TG^{\hat \kappa}_n(s,u)ds$, for any $t \in [0,T]$ and any $u \in [0,1]$. Observe that Lemma \ref{6.1} holds for these test functions. Plugging the test functions $H^{\hat \kappa}_n$ in \eqref{eq:Robin Neumann} and passing to the limit as $n$ goes to infinity we get
\begin{equation}
\int_0^T||\rho^{\hat \kappa}_s||^2ds+\cfrac{1}{2}\Big| \Big |\int_0^T\rho^{\hat\kappa}_sds\Big|\Big|_{\gamma/2}^2+\cfrac{\hat \kappa m}{2}\Big| \int_0^T \rho^{\hat \kappa}_s(0)ds\Big|^2+\cfrac{\hat\kappa m}{2}\Big| \int_0^T \rho^{\hat \kappa}_s(1)ds\Big|^2=0,
\end{equation}
if we can prove that
\begin{equation}
\label{eq:Robin09}
    \lim_{n \to \infty} \int_0^T H_n^{\hat \kappa} (s,0) \rho^{\hat \kappa}_s (0) ds  =   \cfrac{1}{2}\Big| \int_0^T \rho^{\hat \kappa}_s(0)ds\Big|^2
\end{equation}
and similarly with the point $0$ replaced by the point $1$. This will imply the uniqueness of weak solutions for \eqref{eq:Robin Equation}. 

To prove \eqref{eq:Robin09} we observe, by using Fubini's Theorem, that
\begin{equation*}
    \int_0^T H_n^{\hat \kappa} (s,0) \rho^{\hat \kappa}_s (0)ds=  \iint_{0\le s\le r \le T} {\hat G}_n^{\hat \kappa} (r, 0) {\rho}^{\hat \kappa}_s (0) dr ds.
\end{equation*}
To conclude it is sufficient to prove that $G_n^{\hat \kappa} (\cdot, 0)$ converges to $\rho_{\cdot}^{\hat \kappa} (0)$ in the space $L^2 ([0,T], dt)$. Let us denote $f_n = G_n^{\hat \kappa} -\rho^{\hat \kappa}$. We know that $\{f_n\}_{n \in \mathbb N}$ converges to $0$ in the space $L^2 ([0,T]; {\mc H}^{\gamma/2})$, as $n$ goes to infinity. By Theorem 8.2 in \cite{Val} we have that for any $s\in [0,T]$ 
\begin{equation*}
    \sup_{u,v \in [0,1]^2} | f_n (s, u) -f_n(s,v)|\;  \lesssim \;  |u-v|^{\tfrac{\gamma-1}{2}} \, \| f_n (s, \cdot)\|_{{\mc H}^{\gamma/2}} \lesssim \| f_n (s, \cdot)\|_{{\mc H}^{\gamma/2}}.  
\end{equation*}
By the triangular inequality, averaging and Cauchy-Schwarz inequality, we get that for any $s\in[0,T]$ and any $0<R<1$
\begin{equation*}
\begin{split}
 |f_n(s,0)| &\lesssim \cfrac{1}{R}\int_{0}^{R} |f_n(s,v)| dv \, +\, \| f_n (s, \cdot)\|_{{\mc H}^{\gamma/2}}\\
&\lesssim \cfrac{1}{\sqrt{R}} \, \| f_n (s, \cdot)\| \, + \,  \| f_n (s, \cdot)\|_{{\mc H}^{\gamma/2}} \, \lesssim\, \cfrac{1}{\sqrt{R}} \| f_n (s, \cdot)\|_{{\mc H}^{\gamma/2}}.  
\end{split}
\end{equation*}
It follows that
\begin{equation*}
\int_0^{T} | f_n (s,0)|^2 ds \lesssim \cfrac{1}{R} \int_0^T \| f_n (s, \cdot)\|_{{\mc H}^{\gamma /2}}^2 \, ds.    
\end{equation*}
The right-hand side of the previous inequality goes to $0$, as $n$ goes to infinity, so that the same holds for the left-hand side. This completes the proof.


\subsection{Uniqueness of  weak solutions of \eqref{eq:Dirichlet Equation_infty}}

Consider two  weak solutions $\rho^{\hat \kappa,1}$ and $\rho^{\hat \kappa,2}$, and note that  their difference $\rho^{\hat \kappa}:= \rho^{\hat \kappa,1}-\rho^{\hat \kappa,2}$ belongs to $L^2(0,T;V_1)$. Observe that
\begin{align*}
  \forall H \in C_c^{1,\infty} ( [0,T] \times (0,1) ), \forall s \in [0,T], \quad     \| H (s, \cdot)  \| \lesssim \| H (s, \cdot) \|_{V_1}.
\end{align*}
Since $C_c^{1,\infty} \big( [0,T] \times (0,1) \big)$ is dense in $L^2(0,T;V_1)$, it is enough to approximate $\rho^{\hat \kappa}$ in $L^2(0,T;V_1)$ by a sequence $(H_n)_{n \in \mathbb{N}}$ in $C_c^{1,\infty} \big( [0,T] \times (0,1) \big)$. Repeating the procedure analogous to the one in \cite{BJGO2} for the proof of uniqueness of \eqref{eq:Dirichlet Equation_infty} in the regime $\gamma \in (1,2)$, we get the desired result.

\section{Technical lemmas}
In this section we state and prove the main lemmas we used in this work. 

\subsection{Convergence of a discrete operator to the regional fractional Laplacian}
\label{sec:convergence_disc_cont_oper}
The first lemma we prove in this section establishes the convergence in $L^1$ of the discrete operator $N^\gamma \mathcal{L}_N$ defined by \eqref{LN} to the regional fractional Laplacian $\mathbb L$ when applied to smooth test functions. 
\begin{lem}
	\label{L1}
	For any $G \in C^{\infty}([0,1])$ and $\gamma \in (0,2)$, the following convergence holds
	\begin{equation}
	\lim_{N \rightarrow \infty} N^{-1}\sum_{x \in \Lambda_N}|N^{\gamma}(\mathcal{L}_N G)(\tfrac{x}{N})- (\mathbb{L}G)(\tfrac{x}{N})|=0.
	\label{x>e}
	\end{equation}
\end{lem}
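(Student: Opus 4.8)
The plan is to recognize $N^{\gamma}\mathcal L_N G$ as a Riemann-type discretization of the singular integral defining $\mathbb L G$, and to control the discrepancy in an $x$-averaged ($L^1$) sense by truncating the singularity at a mesoscopic scale $\epsilon$. First I would rewrite, using $p(z)=c_\gamma|z|^{-\gamma-1}$ and the change of variables $z=y-x$,
$$N^{\gamma}(\mathcal L_N G)(\tfrac xN)=\frac{c_\gamma}{N}\sum_{\substack{y\in\Lambda_N\\ y\ne x}}\frac{G(\tfrac yN)-G(\tfrac xN)}{|\tfrac yN-\tfrac xN|^{1+\gamma}},$$
which is the exact discrete analogue of $(\mathbb L G)(u)=c_\gamma\,\mathrm{p.v.}\int_0^1\frac{G(v)-G(u)}{|v-u|^{1+\gamma}}\,dv$. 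Fix $\epsilon\in(0,\tfrac12)$ and split each object into a near-diagonal part ($|y-x|<\epsilon N$, resp. $|v-u|<\epsilon$) and a far part. Writing $D_N(x):=N^{\gamma}(\mathcal L_N G)(\tfrac xN)-(\mathbb L G)(\tfrac xN)=(A^\epsilon_N(x)-a^\epsilon(\tfrac xN))+(B^\epsilon_N(x)-b^\epsilon(\tfrac xN))$ with the obvious notation, I would bound $\frac1N\sum_x|D_N(x)|$ by the three averages of $|A^\epsilon_N|$, $|a^\epsilon(\tfrac\cdot N)|$ and $|B^\epsilon_N-b^\epsilon(\tfrac\cdot N)|$.

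For the far part, the integrand $v\mapsto\frac{G(v)-G(u)}{|v-u|^{1+\gamma}}\mathbf 1_{|v-u|\ge\epsilon}$ is, for each fixed $u$, of bounded variation with total variation and sup-norm bounded by a constant $C(\epsilon,G)$ uniformly in $u$; hence the Riemann-sum error satisfies $|B^\epsilon_N(x)-b^\epsilon(\tfrac xN)|\le C(\epsilon,G)/N$ uniformly in $x$, so this average tends to $0$ as $N\to\infty$ for each fixed $\epsilon$.

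The heart of the argument is the near-diagonal part, where the singularity forces us to exploit cancellation. For an interior point ($\epsilon<u<1-\epsilon$, resp. $\lceil\epsilon N\rceil\le x\le N-\lceil\epsilon N\rceil$) the truncation domain is symmetric about $u$ (resp. the index set $\{0<|z|<\epsilon N\}$ is symmetric about $0$), so the first-order Taylor term of $G$ integrates (sums) to zero by antisymmetry; a second-order expansion then gives
$$|a^\epsilon(u)|\lesssim\|G''\|_\infty\int_{|v-u|<\epsilon}|v-u|^{1-\gamma}\,dv\lesssim\epsilon^{2-\gamma},\qquad |A^\epsilon_N(x)|\lesssim N^{\gamma-2}\!\!\sum_{0<|z|<\epsilon N}\!\!|z|^{1-\gamma}\lesssim\epsilon^{2-\gamma},$$
both uniformly. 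For the boundary layer (of Lebesgue measure $2\epsilon$, resp. $O(\epsilon N)$ sites) symmetry fails; there I would split off the largest symmetric sub-interval around $u$ (handled as above) and bound the remaining one-sided tail by $\|G'\|_\infty\int_u^\epsilon r^{-\gamma}\,dr$, which is $\lesssim\epsilon^{1-\gamma}$ for $\gamma\in(0,1)$ and $\lesssim u^{1-\gamma}$ for $\gamma\in(1,2)$, its discrete counterpart being $\lesssim N^{\gamma-1}\sum_{z\ge x}z^{-\gamma}$ with the same behaviour. Since $\int_0^\epsilon u^{1-\gamma}\,du\lesssim\epsilon^{2-\gamma}$ and the boundary layer carries weight $O(\epsilon)$, integrating/summing these pointwise bounds over the boundary layer again contributes $\lesssim\epsilon^{2-\gamma}$. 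Collecting everything yields $\limsup_{N\to\infty}\frac1N\sum_x|D_N(x)|\lesssim\epsilon^{2-\gamma}$, and letting $\epsilon\to0$ proves the claim.

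The main obstacle is precisely this near-diagonal analysis for $\gamma\in(1,2)$, where the integrand is not absolutely integrable and only the principal-value/antisymmetry cancellation of the first-order term renders the estimate finite; the delicate point is that this cancellation is destroyed in the boundary layer, so one must simultaneously use that the boundary layer is thin in the $L^1$ average and that there the one-sided tail, although pointwise as large as $u^{1-\gamma}$, remains integrable in $u$ up to the boundary. For $\gamma\in(0,1)$ the integrand is absolutely convergent and all these estimates hold a fortiori, so the argument treats both ranges uniformly.
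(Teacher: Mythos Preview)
Your proposal is correct and follows essentially the same strategy as the paper: truncate at scale $\epsilon$, control the far part as a Riemann sum, use second-order Taylor cancellation on the symmetric near part, and absorb the one-sided boundary tail (of size $u^{1-\gamma}$ when $\gamma>1$) via its $L^1$-integrability over the thin boundary layer. The only difference is organizational---the paper splits first over $x$ into interior and boundary regions and then introduces the truncated operator $\mathbb L_\epsilon$, whereas you split the integrand first into near and far parts and then distinguish interior from boundary points---but the ingredients and estimates are the same.
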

\begin{proof}
	Fix a small $\epsilon >0$, then the  expression inside the limit in \eqref{x>e} is equal to 
	\begin{equation}
	\label{eq:Stef-L2}
	\begin{split}
	 N^{-1}& \Bigg\{\sum_{\substack{1\leq x \leq \epsilon N}}|N^{\gamma}(\mathcal{L}_N G)(\tfrac{x}{N})- (\mathbb{L}G)(\tfrac{x}{N})|\\
	&+\sum_{ \epsilon N<x\leq ({N-1})/{2}}|N^{\gamma}(\mathcal{L}_N G)(\tfrac{x}{N})- (\mathbb{L}G)(\tfrac{x}{N})|\\
	&+\sum_{(N-1)/2<x< N-1-\epsilon N}|N^{\gamma}(\mathcal{L}_N G)(\tfrac{x}{N})- (\mathbb{L}G)(\tfrac{x}{N})|\\
	&+\sum_{N-1-\epsilon N\leq x\leq N-1}|N^{\gamma}(\mathcal{L}_N G)(\tfrac{x}{N})- (\mathbb{L}G)(\tfrac{x}{N})|\Bigg\}.
	\end{split}
	\end{equation}
We will analyze the four terms above when $N \rightarrow \infty$ and afterwards $\epsilon \rightarrow 0$. From  the triangular inequality, the first one is bounded from above by
	\begin{equation}
	 N^{-1}\sum_{1\leq x \leq \epsilon N}|N^{\gamma}(\mathcal{L}_N G)(\tfrac{x}{N})|+ N^{-1}\sum_{\substack{1 \leq x \leq \epsilon N}}| (\mathbb{L}G)(\tfrac{x}{N})|.
	\label{x<e}
	\end{equation}
	Now, notice that in this case ($1\leq x\leq \epsilon N$) we can write 
	\begin{equation}
	(\mathcal{L}_N G)(\tfrac{x}{N})= \sum_{y=1 }^{x-1}p(y)\theta_{\tfrac{x}{N}}(\tfrac{y}{N})+\sum_{y=x}^{N-1-x}p(y)(G(\tfrac{x+y}{N})-G(\tfrac{x}{N})),
	\end{equation}
	where we introduced 
	\begin{equation}
	    \theta_{\tfrac{x}{N}}(z):=G(\tfrac{x}{N}+z)+G(\tfrac{x}{N}-z)-2G(\tfrac{x}{N}).
	\end{equation}
	Therefore, the term on the left-hand side of \eqref{x<e} is bounded from above by 
	\small
	\begin{equation}
		 \frac{1}{N}\sum_{1 \leq  x \leq \epsilon N}\Big| N^{\gamma} \sum_{y=1 }^{x-1}p(y)\theta_{\tfrac{x}{N}}(\tfrac{y}{N})\Big| +  \frac{1}{N} \sum_{1\leq  x \leq \epsilon N}\Big| N^{\gamma}\sum_{y=x}^{N-1-x}p(y)(G(\tfrac{x+y}{N})-G(\tfrac{x}{N}))\Big|.
		\label{2terms}
	\end{equation}
	\normalsize
	Performing a second order Taylor expansion in $\theta_{\tfrac{x}{N}}$, we can bound from above the term on the left-hand side of the previous display in the following way:
	\begin{equation}
	\begin{split}
	 N^{-1}\sum_{1\leq x \leq \epsilon N}\Big| N^{\gamma} \sum_{y=1 }^{x-1}p(y)\theta_{\tfrac{x}{N}}(\tfrac{y}{N})\Big| &\lesssim  N^{-1}\sum_{1 \leq x \leq \epsilon N}\, N^{\gamma-2} \sum_{y=1 }^{x-1}y^{1-\gamma}\\
	&\lesssim  N^{\gamma-3}\sum_{1\leq x \leq \epsilon N} x^{2-\gamma} \lesssim \epsilon^{3-\gamma}
	\end{split}
	\end{equation}
	which goes to $0$ as $\epsilon \rightarrow 0$.
	In order to prove that the first term of \eqref{x<e} goes to $0$  as $N \rightarrow \infty$ and then $\epsilon \rightarrow \infty$, we still have to treat the term on the right-hand side of \eqref{2terms}. Performing a Taylor expansion on $G$, we get 
	\begin{equation}
	\begin{split}
	& N^{-1} \sum_{1\leq x \leq \epsilon N}\Big| N^{\gamma} \sum_{y=x }^{N-1-x}p(y)(G(\tfrac{x+y}{N})-G(\tfrac{x}{N}))\Big|\\
	&\lesssim  N^{-1}\sum_{1\leq x \leq \epsilon N}N^{\gamma-1} \sum_{y=x }^{N-1-x}{y}^{-\gamma}\\&\lesssim  N^{\gamma-2} \sum_{y=1 }^{N-1} \sum_{\substack{1\leq x \leq \epsilon N \\ x\leq \inf\{y,N-1-y\}}}y^{-\gamma}\\
	&\lesssim N^{\gamma-2}\Big[\sum_{y\le \epsilon N } y^{-\gamma+1}+\sum_{y>\epsilon N} \epsilon N y^{-\gamma}\Big]\lesssim \epsilon^{-\gamma+2}
	\end{split}
	\end{equation}
and so it goes to $0$ for $\gamma <2$ when we pass to the limit $\epsilon \rightarrow 0$. 

	Now, we have to estimate the term on the right-hand side of \eqref{x<e}. In order to do that, we first get an upper bound for $\big| \mathbb{L}G(\tfrac{x}{N})\big|$ for $1\leq x\leq \epsilon N$. By definition of the operator $\mathbb{L}$, we have  that
			\begin{equation}
			\mathbb{L}G(\tfrac{x}{N}):=c_{\gamma}\lim_{\epsilon' \rightarrow 0}\int_0^1 \mathbb{1}_{|\tfrac{x}{N}-\nu|>\epsilon'}\frac{G(\nu)-G(\tfrac{x}{N})}{|\tfrac{x}{N}-\nu|^{\gamma+1}}d\nu. 
			\end{equation}
We use a first order Taylor expansion to write			
			\begin{equation}
			\label{eq:mistake1}
			\int_0^1 \mathbb{1}_{|\tfrac{x}{N}-\nu|>\epsilon'}\frac{G(\nu)-G(\tfrac{x}{N})}{|\tfrac{x}{N}-\nu|^{\gamma+1}}d\nu =
			G'\big(\tfrac{x}{N} \big)\int_0^1 \mathbb{1}_{|\tfrac{x}{N}-\nu|>\epsilon'}\frac{\nu-\tfrac{x}{N}}{|\tfrac{x}{N}-\nu|^{\gamma+1}}d\nu \; + \;  R_{N,x,\epsilon, \epsilon', G},
			\end{equation}
where the remainder term can be bounded as follows (since $\gamma < 2$)
\begin{equation*}
| R_{N,x,\epsilon, \epsilon', G} | \lesssim \int_0^1 \mathbb{1}_{|\tfrac{x}{N}-\nu|>\epsilon'}|\tfrac{x}{N}-\nu|^{1-\gamma}d\nu \lesssim 1.   
\end{equation*}
			 Moreover, by changing variables and taking $\epsilon'\leq 1/N \leq \epsilon$, we can write the integral on the right-hand side of \eqref{eq:mistake1} as
			\begin{equation}
			\begin{split}
			\int_{-\tfrac{x}{N}}^{1-\tfrac{x}{N}}&\mathbb{1}_{\{|z|>\epsilon'\}} \, z|z|^{-\gamma-1}dz = \int_{\epsilon'}^{1-\tfrac{x}{N}} \, z z^{-\gamma -1}dz+ \int_{-\tfrac{x}{N}}^{-\epsilon'}\, z (-z)^{-\gamma -1} dz\\
			&=\int_{\epsilon'}^{1-\tfrac{x}{N}}z^{-\gamma}dz-\int_{\epsilon'}^{\tfrac{x}{N}}z^{-\gamma}dz=\frac{(1-\tfrac{x}{N})^{-\gamma+1}-(\tfrac{x}{N})^{-\gamma+1}}{-\gamma+1},
			\end{split}
			\end{equation}
assuming $\gamma \neq 1$. Therefore, since $G'$ is uniformly bounded, if $\gamma \in (1,2)$ we have  the following bound:
			\begin{equation}
			 N^{-1}\sum_{1\leq x \leq \epsilon N}| (\mathbb{L}G)(\tfrac{x}{N})|\lesssim  N^{-1}\sum_{1\leq x \leq \epsilon N} \Big[(\tfrac{x}{N})^{1-\gamma} +1\Big]\lesssim \epsilon^{2-\gamma}
			\end{equation}
			which goes to $0$ if we send $\epsilon$ to $0$.
			Otherwise, if $\gamma \in (0,1)$ we have  
			\begin{equation}
			 N^{-1}\sum_{1\leq  x \leq \epsilon N}| (\mathbb{L}G)(\tfrac{x}{N})|\lesssim  N^{-1}\sum_{1 \leq  x \leq \epsilon N} 1 = \epsilon
			\end{equation}
			which also goes to $0$ as $\epsilon \rightarrow 0$. It remains to consider the case $\gamma=1$, in which the integral on the right-hand side of \eqref{eq:mistake1} is
			\begin{align*}
	\int_{-\tfrac{x}{N}}^{1-\tfrac{x}{N}}&\mathbb{1}_{\{|z|>\epsilon'\}} \, z|z|^{-\gamma-1}dz = \int_{\tfrac{x}{N}}^{1-\tfrac{x}{N}}z^{-1}dz = \log \Big( \frac{N-x}{x} \Big),		
			\end{align*}
which leads to
\begin{align*}
& N^{-1}\sum_{1\leq  x \leq \epsilon N}| (\mathbb{L}G)(\tfrac{x}{N})|\lesssim  N^{-1}\sum_{1 \leq  x \leq \epsilon N} [ \log(N-x) - \log(x) +1] \\
\lesssim&  \ N^{-1} \Big( \int_{N-\epsilon N}^{N-2} \log(v) dv - \int_2^{\epsilon N} \log(v) dv \Big) +\epsilon=  \epsilon[1- \log(\epsilon)] - (1- \epsilon) \log(1- \epsilon) ,
\end{align*}						
		which goes to zero as $\epsilon \rightarrow 0$.	
			This concludes the proof of the fact that the first term of \eqref{eq:Stef-L2} vanishes as $N \rightarrow \infty$ and then $\epsilon \rightarrow 0$.

Let us now focus on the second term  of  \eqref{eq:Stef-L2}, namely the one involving the sites $\epsilon N < x <(N-1)/2$. We can bound this term by 
	\begin{equation}	
	\label{est}
	\begin{split}
	N^{-1}\sum_{\epsilon N < x <(N-1)/2} &|N^{\gamma}(\mathcal{L}_N G)(\tfrac{x}{N})- (\mathbb{L}G)(\tfrac{x}{N})|  \\
	&\leq N^{-1}\sum_{\epsilon N < x <(N-1)/2}|N^{\gamma}(\mathcal{L}_N G)(\tfrac{x}{N})- (\mathbb{L}_{\epsilon}G)(\tfrac{x}{N})|\\
	& + N^{-1}\sum_{\epsilon N < x <(N-1)/2}|(\mathbb{L}_{\epsilon}G)(\tfrac{x}{N})- (\mathbb{L}G)(\tfrac{x}{N})|,
	\end{split}
	\end{equation}
where we defined 
	\begin{equation}
	\mathbb{L}_{\epsilon}G(\tfrac{x}{N}):=c_{\gamma}\int_0^1 \mathbb{1}_{|\tfrac{x}{N}-\nu|>\epsilon}\frac{G(\nu)-G(\tfrac{x}{N})}{|\tfrac{x}{N}-\nu|^{\gamma+1}}d\nu.
	\end{equation}
From a change of  variables in $\mathbb{L}_{\epsilon}G(\tfrac{x}{N})$, we can rewrite it  as
	\begin{equation}
	\label{eq:LepsG}
	\mathbb{L}_{\epsilon}G(\tfrac{x}{N})=c_{\gamma} \int_{\epsilon}^{\tfrac{x}{N}}|z|^{-(1+\gamma)}\theta_{\tfrac{x}{N}}(z)dz+c_{\gamma} \int_{\tfrac{x}{N}-1}^{-\tfrac{x}{N}}\frac{G(\tfrac{x}{N}-z)-G(\tfrac{x}{N})}{|z|^{\gamma+1}}dz,
	\end{equation}
since $\epsilon N < x < (N-1)/2$. Notice now that we can in the same way re-write 
	\begin{equation}
	    	\mathbb{L}G(\tfrac{x}{N})=\lim_{\epsilon'\rightarrow 0}c_{\gamma}  \left\{\int_{\epsilon'}^{\tfrac{x}{N}}|z|^{-(1+\gamma)}\theta_{\tfrac{x}{N}}(z)dz + \int_{\tfrac{x}{N}-1}^{-\tfrac{x}{N}}\frac{G(\tfrac{x}{N}-z)-G(\tfrac{x}{N})}{|z|^{\gamma+1}}dz \right\}.
	\end{equation}
Thus the second term at the right-hand side of \eqref{est} can be rewritten as
	\begin{equation}
	\begin{split}
 & N^{-1}\sum_{\epsilon N < x <(N-1)/2}| (\mathbb{L}_{\epsilon}G)(\tfrac{x}{N}) - \lim_{\substack{\epsilon' \rightarrow 0\\ \epsilon' <\epsilon}}(\mathbb{L}_{\epsilon'}G)(\tfrac{x}{N})|\\
 &=\lim_{\substack{\epsilon' \rightarrow 0\\ \epsilon'<\epsilon}}N^{-1}\sum_{\epsilon N < x <(N-1)/2}\bigg|\int_{\epsilon'}^{\epsilon}z^{-(1+\gamma)}\theta_{\tfrac{x}{N}}(z)dz \bigg|\\
	&\lesssim \lim_{\substack{\epsilon' \rightarrow 0\\ \epsilon'<\epsilon}}(\epsilon^{2-\gamma}-(\epsilon')^{2-\gamma}) =\epsilon^{2-\gamma},
	\end{split}
	\end{equation}
where the last inequality is obtained by performing a Taylor expansion of the second order on $G$ and using the fact that $||G''||_{\infty}$ is bounded. So, we have proved that this term goes to $0$ when $N \rightarrow \infty$ and then $\epsilon$ goes to $0$ {since} $\gamma < 2$. 
	
We estimate now the first term on the right-hand side of \eqref{est}. Observe that, by changing variables, we can write in this case ($\epsilon N < x <(N-1)/2$)  
	\begin{equation}
	\begin{split}
	(\mathcal{L}_N G)(\tfrac{x}{N}) &= \sum_{\epsilon N\leq y\leq x-1}p(y)\theta_{\tfrac{x}{N}}(\tfrac{y}{N})+ \sum_{1\leq y <\epsilon N }p(y)\theta_{\tfrac{x}{N}}(\tfrac{y}{N})\\
	&+\sum_{x\leq y\leq N-1-x}p(y)(G(\tfrac{x+y}{N})-G(\tfrac{x}{N})).
	\end{split}
	\end{equation}
	Then, recalling \eqref{eq:LepsG}, we have to estimate the following quantity: 
	\begin{equation}
	\begin{split}
	&N^{-1}\sum_{\epsilon N < x <(N-1)/2}|N^{\gamma}(\mathcal{L}_N G)(\tfrac{x}{N})- (\mathbb{L}_{\epsilon}G)(\tfrac{x}{N})|\\
	&\lesssim N^{-1}\sum_{\epsilon N < x <(N-1)/2}\bigg\{ \bigg|N^{\gamma}\sum_{y=\epsilon N}^{x-1}p(y)\theta_{\tfrac{x}{N}}(\tfrac{y}{N})-c_{\gamma}\int_{\epsilon}^{\tfrac{x}{N}}|z|^{-(1+\gamma)}\theta_{\tfrac{x}{N}}(z)dz \bigg| \\
	&\hspace{3cm} +\bigg|N^{\gamma}\sum_{y=1 }^{\epsilon N-1} p(y) \theta_{\tfrac{x}{N}}(\tfrac{y}{N})\bigg|\\
	& + \bigg|N^{\gamma}\sum_{y=x}^{N-1-x}p(y)[G(\tfrac{x+y}{N})-G(\tfrac{x}{N})]-c_{\gamma}\int_{\tfrac{x}{N}}^{1-\tfrac{x}{N}}|z|^{-(1+\gamma)}[G(\tfrac{x}{N}+z)-G(\tfrac{x}{N})]dz \bigg|\bigg\}\\
	&=N^{-1}\sum_{\epsilon N < x <(N-1)/2}\{(I)+(II)+(III)\}.
	\end{split}
	\end{equation}
	We can treat the term $(I)$ in the previous sum as 
	\small
	\begin{equation}
	\begin{split}
	(I)&=\bigg|\sum_{y = \epsilon N}^{x-1}c_{\gamma}\int_{\tfrac{y}{N}}^{\tfrac{y+1}{N}}\Big[(\tfrac{y}{N})^{-(1+\gamma)}\theta_{\tfrac{x}{N}}(\tfrac{y}{N})-z^{-(1+\gamma)}\theta_{\tfrac{x}{N}}(z)\Big] dz\bigg|\\&\lesssim \bigg|\sum_{y = \epsilon N}^{x-1}\int_{\tfrac{y}{N}}^{\tfrac{y+1}{N}}\big[(\tfrac{y}{N})^{-(1+\gamma)}-z^{-(1+\gamma)}\big] \theta_{\tfrac{x}{N}}(\tfrac{y}{N})dz\bigg| +\bigg|\sum_{y = \epsilon N}^{x-1}\int_{\tfrac{y}{N}}^{\tfrac{y+1}{N}}z^{-(1+\gamma)}\left(\theta_{\tfrac{x}{N}}(\tfrac{y}{N})-\theta_{\tfrac{x}{N}}(z)\right) dz\bigg|\\
	&\lesssim \sum_{y = \epsilon N}^{x-1}\int_{\tfrac{y}{N}}^{\tfrac{y+1}{N}}\big\Vert\theta_{\tfrac{x}{N}}(\cdot)\big\Vert_{\infty} [(\tfrac{y}{N})^{-(1+\gamma)}-z^{-(1+\gamma)}]dz+\sum_{y = \epsilon N}^{x-1}\int_{\tfrac{y}{N}}^{\tfrac{y+1}{N}}z^{-(1+\gamma)}\big\Vert\theta'_{\tfrac{x}{N}}(\cdot)\big\Vert_{\infty}|\tfrac{y}{N}-z| dz\\
	&\lesssim \sum_{y = \epsilon N}^{x-1}\int_{\tfrac{y}{N}}^{\tfrac{y+1}{N}}[(\tfrac{y}{N})^{-(1+\gamma)}-z^{-(1+\gamma)}]dz+N^{-1}\int_{\epsilon}^{1}z^{-(1+\gamma)} dz.
	\end{split}
	\end{equation}
	\normalsize
	Above we have used the fact that  $\Vert \theta_{\tfrac{x}{N}}(\cdot)\Vert_{\infty}$ and $\Vert \theta^{\prime}_{\tfrac{x}{N}}(\cdot)\Vert_{\infty}$ are uniformly bounded in $x$. Now, it is not difficult to check that 
	\begin{equation}
	\int_{\tfrac{y}{N}}^{\tfrac{y+1}{N}}[(\tfrac{y}{N})^{-(1+\gamma)}-z^{-(1+\gamma)}]dz \lesssim N^{\gamma}y^{-(2+\gamma)}.
	\end{equation}
	Thanks to these computations we conclude that (recall that $\epsilon$ will go to $0$ after $N$ being sent to infinity) 
	\begin{equation}
	\begin{split}
	\lim _{N\rightarrow \infty}N^{-1}\sum_{\epsilon N < x <(N-1)/2}(I)&\lesssim \lim _{N\rightarrow \infty}N^{-1}\sum_{\epsilon N < x <(N-1)/2}(N^{\gamma}\sum_{y = \epsilon N}^{x-1}y^{-(2+\gamma)}+{\epsilon}^{-\gamma} N^{-1}) =0.\\
	\end{split}
	\end{equation} 
	Now we estimate the term involving $(II)$:
	\begin{equation}
	\begin{split}
	 N^{-1}\sum_{\epsilon N < x <(N-1)/2}(II) &\leq  N^{-1}\sum_{\epsilon N < x <(N-1)/2}\Big| N^{\gamma} \sum_{y=1 }^{\epsilon N}p(y)\theta_{\tfrac{x}{N}}(\tfrac{y}{N})\Big| \\& \lesssim  N^{-1}\sum_{\epsilon N < x <(N-1)/2}\Big|N^{\gamma-2} \sum_{y=1 }^{\epsilon N}y^{1-\gamma}\Big|\\
	&\lesssim  N^{-1}\sum_{\epsilon N < x <(N-1)/2} \epsilon^{2-\gamma} \leq \epsilon^{2-\gamma},
	\end{split}
	\end{equation}
	which goes to $0$, as $\epsilon$ goes to $0$. 

	In order to conclude, we have to prove that the term involving $(III)$ goes to $0$. We have that
	\begin{equation}
	\begin{split}
&(III)\\
&=\Big| \sum_{y=x}^{N-1-x}c_{\gamma}\int_{\tfrac{y}{N}}^{\tfrac{y+1}{N}} \Big[(\tfrac{y}{N})^{-(\gamma+1)}(G(\tfrac{x+y}{N})-G(\tfrac{x}{N}))-z^{-(\gamma+1)}(G(\tfrac{x}{N} + z)-G(\tfrac{x}{N})) \Big] dz\Big|\\
	&\lesssim \Big| \sum_{y=x}^{N-1-x}\int_{\tfrac{y}{N}}^{\tfrac{y+1}{N}} \Big[(\tfrac{y}{N})^{-(\gamma+1)} -z^{-(\gamma +1)} \big] (G(\tfrac{x+y}{N})-G(\tfrac{x}{N})) dz\Big|\\
	&+ \Big| \sum_{y=x}^{N-1-x}\int_{\tfrac{y}{N}}^{\tfrac{y+1}{N}} z^{-(\gamma+1)}\Big[G(\tfrac{x+y}{N})-G(\tfrac{x}{N} + z) \Big] dz\Big|
	\end{split}
	\end{equation}
	
	By the mean value theorem applied to the function $f:u \to u^{-(\gamma+1)}$ and to the function $G$ we have 
	\begin{equation}
	\begin{split}
(III) &\; \lesssim\;  \frac{1}{N^2}\sum_{y=x}^{N-1-x} \Big(\tfrac{y}{N} \Big)^{-(\gamma+1)}\;  \le \;   \frac{1}{N}  \sum_{y=x}^{N-1-x} \int_{\tfrac{y-1}{N}}^{\tfrac{y}{N}} u^{-(\gamma+1)} du \; \lesssim\;  \frac{1}{N}  \, \Big( \frac{x}{N}\Big)^{-\gamma}. 
	\end{split}
	\end{equation}
It follows that
\begin{equation}
			\begin{split}
			N^{-1}&\sum_{\epsilon N < x <(N-1)/2}(III)\lesssim N^{\gamma-2}\sum_{\epsilon N < x <(N-1)/2}\frac{1}{x^{\gamma}}.
	\end{split}
			\end{equation}

	If $\gamma \in(1,2)$ the series $\sum_{x\ge 1} x^{-\gamma}$ is converging and so the term to estimate is of order $N^{\gamma-2}$ (uniformly in $\epsilon$) and goes to $0$, as $N$ goes to infinity. If $\gamma <1$, for fixed $\epsilon>0$ the sum is of order $N^{-\gamma+1}$ and so the global order is $N^{-1}$ and then goes to $0$, as $N$ goes to infinity. Finally, if $\gamma=1$, the sum can be bounded by $\int_{\varepsilon N}^{N} u^{-1} du$ and the term to estimate is of order $N^{-1} \log(\varepsilon^{-1})$ which goes to zero, since first $N$ goes to infinity and afterwards $\varepsilon$ goes to zero. This proves that the first two terms of \eqref{eq:Stef-L2} vanishes, as $N$ goes to infinity and then $\epsilon$ goes to $0$. 
	
	The analysis of the last two terms in \eqref{eq:Stef-L2} can be done in the same way of the first two, just performing the following change of variables $x \mapsto N-1-x$.
\end{proof}

\color{purple}

\color{black}

\subsection{Fixing the value of the profile at the boundary}\label{sec:fixing}
The aim of this subsection is to show that, in the case $0\leq \theta < \gamma-1$ and $\gamma \in (1,2)$, the profile $\rho(\cdot)$ satisfies the  Dirichlet boundary conditions stated in item iii) of Definition \ref{Def. Dirichlet0 Condition}. This will be a consequence of Proposition \ref{prop:fixing} which is a consequence of a series of results that we state below. Some of these auxiliary lemmas will also be used in the case $\theta=\gamma-1$. First, we define the following empirical averages:
\begin{equation}
\label{mean}
\overrightarrow{\eta}^\ell_s(x)=\frac{1}{\ell}\sum_{y=x+1}^{x+\ell}\eta^N_s(y) \quad \text{and} \quad \overleftarrow{\eta}^\ell_s(x)=\frac{1}{\ell}\sum_{y=x-\ell}^{x-1}\eta^N_s(y),
\end{equation} 
where $x\in \{0,\dots,N\}$ and $\ell$ is a positive integer such that, in the sum on the left-hand side we have  $x+\ell\leq N-1$ and on the right-hand side we have  $x-\ell\geq 1$. 

\begin{rem}
We always assume that in the size of the boxes involved, the number of sites on which we compute the mean is an integer. When it is not we approximate with the closest greater integer, for example $\overrightarrow \eta^{\epsilon N}(0)$ shall be read as $\overrightarrow \eta^{\lceil \epsilon N\rceil}(0)$. We do not write it in each step in order to leave the computations more readable.
\end{rem}

\begin{prop}\label{prop:fixing}
	\label{prop}
	In the regime $0\leq \theta <\gamma-1$ and $\gamma \in (1,2)$, for any $T>0$ and  $\epsilon>0$, the following convergences hold
	\begin{equation}\label{prop1}
	\limsup_{\epsilon \rightarrow 0}\limsup_{N \rightarrow \infty}\mathbb{E}_{\mu_N}\Bigg[\Big|\int_0^T(\alpha-\overrightarrow\eta_s^{\epsilon N}(0))ds\Big|\Bigg]=0,
	\end{equation}
	\begin{equation}
	\label{prop2}
	\limsup_{\epsilon \rightarrow 0}\limsup_{N \rightarrow \infty}\mathbb{E}_{\mu_N}\Bigg[\Big|\int_0^T(\beta-\overleftarrow\eta_s^{\epsilon N}(N))ds\Big|\Bigg]=0,
	\end{equation}
	where $\overrightarrow\eta_s^{\epsilon N}(0)$ and $\overleftarrow \eta_s^{\epsilon N}(N)$ are defined in \eqref{mean}.
\end{prop}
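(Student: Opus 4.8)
The two limits are symmetric, the second following from the first under the reflection $x\mapsto N-x$ together with $\alpha\leftrightarrow\beta$, so I focus on \eqref{prop1}. The idea is to use site $1$, where the left reservoir acts at the largest rate, as a pivot between $\alpha$ and the macroscopic box average, decomposing
\begin{equation*}
\alpha-\overrightarrow{\eta}^{\epsilon N}_s(0)=\big(\alpha-\eta^N_s(1)\big)+\big(\eta^N_s(1)-\overrightarrow{\eta}^{\ell_0}_s(0)\big)+\big(\overrightarrow{\eta}^{\ell_0}_s(0)-\overrightarrow{\eta}^{\epsilon N}_s(0)\big),
\end{equation*}
for an intermediate scale $\ell_0=\ell_0(N)$ to be tuned, and bounding the time integral of the expected absolute value of each term. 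Every bound will be produced by the entropy inequality together with the Feynman--Kac formula, exactly as in \eqref{eq:varfor1}, with reference measure $\nu^N_{\rho(\cdot)}$ attached to a fixed Lipschitz profile $\rho$ with $\rho(0)=\alpha$, $\rho(1)=\beta$ and $\alpha\le\rho\le\beta$. The entropy bound \eqref{H} and the Dirichlet form estimate of Lemma \ref{bound_Dir}, in its Lipschitz form \eqref{lipschitz}, then turn each term into a variational problem of the type $\sup_f\{\int V f\,d\nu^N_{\rho(\cdot)}-\tfrac{\Theta(N)}{4bN}D_N(\sqrt f,\nu^N_{\rho(\cdot)})+\mathrm{err}\}$, where $b$ is the free parameter in the entropy inequality and the error is $O(1/b)$ after $N\to\infty$; applying this to $\pm V$ controls the absolute value. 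The whole statement will follow by letting $N\to\infty$, then $b\to\infty$, then $\epsilon\to0$.

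For the reservoir term $\alpha-\eta^N_s(1)$ I would keep only the left-boundary part of the Dirichlet form, namely $D_N\ge\kappa N^{-\theta}r_N^-(\tfrac1N)I^\alpha_1=\tfrac\kappa2 N^{-\theta}I^\alpha_1$, and use the elementary bound $|\int(\alpha-\eta(1))f\,d\nu^N_{\rho(\cdot)}|\lesssim\tfrac{A}{2}I^\alpha_1(\sqrt f,\nu^N_{\rho(\cdot)})+\tfrac{C}{A}$, valid for every $A>0$ by Young's inequality applied to the flip discrepancy (up to a harmless $O(N^{-1})$ coming from $\alpha-\rho(\tfrac1N)$). Choosing $A$ so that the $I^\alpha_1$ contributions cancel against $-\tfrac{\Theta(N)}{4bN}D_N$ leaves a residual of order $b\,N^{-(\gamma-\theta-1)}$, which vanishes as $N\to\infty$ precisely because $\theta<\gamma-1$. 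This is the ``proper regime of $\theta$'' in which $\eta^N(1)$ may be replaced by $\alpha$; the same computation performed on a whole block, using $r_N^-(\tfrac yN)\sim y^{-\gamma}$, would let the reservoir control the block average only up to a scale that is a genuine power of $N$ strictly below $\epsilon N$, which is the structural reason why a single replacement does not suffice.

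The two remaining terms are of bulk nature and are handled with the exchange part $D_N^0$ of the Dirichlet form. For $\overrightarrow{\eta}^{\ell_0}_s(0)-\overrightarrow{\eta}^{\epsilon N}_s(0)$ I telescope over dyadic scales $\ell_k=2^k\ell_0$, with $K\sim\log_2(\epsilon N/\ell_0)$,
\begin{equation*}
\overrightarrow{\eta}^{\ell_0}_s(0)-\overrightarrow{\eta}^{\epsilon N}_s(0)=\sum_{k=0}^{K-1}\big(\overrightarrow{\eta}^{\ell_k}_s(0)-\overrightarrow{\eta}^{\ell_{k+1}}_s(0)\big),
\end{equation*}
and estimate each increment by comparing the averages over two adjacent boxes of size $\ell_k$. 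After the variational reduction, each increment costs a term governed by $D_N^0$ through the moving-particle estimate of Lemma \ref{posMPL}, which quantifies the energy needed to transport occupation variables across a box of size $\ell_k$; the decisive point is that this cost must decay geometrically in $k$ so that the sum over the $K$ scales stays uniformly small and vanishes after $N\to\infty$ and $b\to\infty$. The first replacement $\eta^N_s(1)-\overrightarrow{\eta}^{\ell_0}_s(0)$ is the analogous one-block estimate at the smallest scale, whose cost is what fixes the admissible value of $\ell_0$, and it is treated by the same mechanism.

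The main obstacle is exactly the energy estimate of Lemma \ref{posMPL}. Because the jump law $p$ is heavy tailed, transporting a particle between two sites can be realised through a large number of long-range exchanges, so that many configurations contribute to the change of energy and a uniform control of the dyadic cost requires a careful enumeration of all of them; this is the content of Section \ref{MPL}. Granting that estimate, the telescoped bulk cost is summable, the reservoir residual vanishes in the regime $\theta<\gamma-1$, and letting successively $N\to\infty$, $b\to\infty$ and $\epsilon\to0$ makes all three contributions disappear, which proves \eqref{prop1}; \eqref{prop2} is obtained by the symmetric argument.
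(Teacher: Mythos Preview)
Your proposal is correct and follows exactly the paper's strategy: the proof of Proposition \ref{prop} is reduced to Lemmas \ref{repalpha}, \ref{rep1} and \ref{lem:multi}, which correspond precisely to your three pieces (reservoir replacement at site $1$, one-block estimate at scale $\ell_0=\epsilon N^{\gamma-1}$, and dyadic multi-scale via the moving-particle Lemma \ref{posMPL}). One small correction: the auxiliary parameter $b$ must be sent to infinity \emph{after} $\epsilon\to 0$, not before, since the bounds produced contain terms of the form $b\,\epsilon^{\gamma-1}$ (see the end of the proofs of Lemmas \ref{rep1} and \ref{lem:multi}).
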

\begin{proof}
The proof of this lemma is a trivial consequence of Lemmas \ref{rep1}, \ref{lem:multi} and \ref{repalpha} in which is proved \eqref{prop1}. In Remark \ref{remark} we explain how to adapt this proof to show \eqref{prop2}. 
\end{proof}

We present first how to pass from the occupation variable $\eta_s^N(1)$ to the mean of the values of the occupations variables in the first $\epsilon N$ sites. It is done in two steps and it is the content of the next two lemmas. Finally, we show in Lemma \ref{repalpha} that $\eta_s^N(1)$, in some way, is a good approximation of $\alpha$. \\
	
	In the next lemma we show that it is possible to replace the occupation variable $\eta_s^N(1)$ by its empirical mean in a box of size $\ell_0=\epsilon N^{\gamma-1}$ sites in the double ordered limit $N\to \infty$ and then $\epsilon \to 0$.
		\color{black}

\begin{lem}[First approximation]
In the regime \footnote{We observe that this lemma is true for any value of  $\theta\geq 0$ and $\gamma<2$, but we restrict to these values of $\theta,\gamma$ since we only need the result  in this regime.}  $0\leq \theta \leq \gamma-1$ and $\gamma \in (1,2)$, for any $T>0$,  $\epsilon>0$ and $\ell_0=\epsilon N^{\gamma-1}$, it holds
\begin{equation}
\limsup_{\epsilon \rightarrow 0}\limsup_{N \rightarrow \infty}\mathbb{E}_{\mu_N}\Bigg[\Big|\int_0^T(\eta^N_s(1)-\overrightarrow\eta_s^{\ell_0}(0))ds\Big|\Bigg]=0,
\end{equation}
where $\overrightarrow\eta_s^{\ell_0}(0)$ is defined in \eqref{mean}.
\label{rep1}
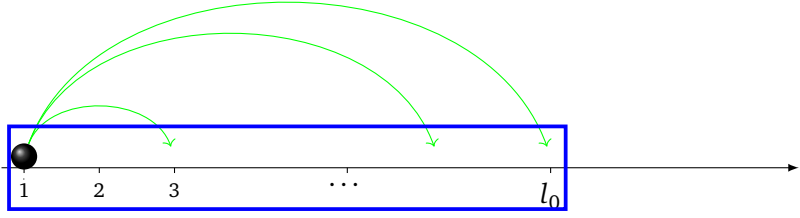
\begin{figure}[htb!]
		\begin{center}
		\begin{tikzpicture}

		\draw[shift={(-5.01,-0.15)}, color=black] (0pt,0pt) -- (0pt,0pt) ;

		\draw[-latex] (-5.3,0) -- (5.3,0) ;
		\foreach \x in {-5.0,-4,...,-3}
		\pgfmathsetmacro\result{\x+6}
		\draw[shift={(\x,0)},color=black] (0pt,0pt) -- (0pt,-2pt) node[below]{\scriptsize \pgfmathprintnumber{\result}};
		\draw[color=black] (-20pt,0pt) -- (-20pt,-2pt) node[below]{\dots};
		\draw[shift={(2,0)},color=black] (0pt,0pt) -- (0pt,-2pt) node[below]{$l_0$};
		
		\node[ball color=black, shape=circle, minimum size=0.3cm] (B) at (-5,0.15) {};

		\node[draw=none] (R) at (2,0.15) {};
		\node[draw=none] (L) at (-5,0.15) {};
		\node[draw=none] (M) at (0.5,0.15) {};
		\node[draw=none] (N) at (-3,0.15) {};

		\path [<-] (R) edge[bend right =70, color=green]node[above] {}(B);	
		\path [<-] (M) edge[bend right =70, color=green]node[above] {}(B);	
		\path [<-] (N) edge[bend right =70, color=green]node[above] {}(B);			

		\draw[, line width = 0.5 mm , draw=blue] (-5.2,0.55) rectangle (2.2,-0.55);

		\end{tikzpicture}
		\caption{First step, we can replace $\eta_s(1)$ with its empirical mean in the box $\{ 1,\dots,\ell_0 \}$.}	
	\end{center}	
\end{figure}
\end{lem}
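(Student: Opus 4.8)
Write $W(\eta):=\eta(1)-\overrightarrow{\eta}^{\ell_0}(0)=\tfrac1{\ell_0}\sum_{y=1}^{\ell_0}\big(\eta(1)-\eta(y)\big)$ and recall that, since $\theta\ge0$, the time scale is $\Theta(N)=N^{\gamma}$ by \eqref{timescale}. The plan is to run the usual entropy/Feynman--Kac scheme, the two non-routine ingredients being the choice of the reference measure and a telescoping of $W$ into nearest--neighbour increments. Since $e^{B|\int_0^T W_s\,ds|}\le e^{B\int_0^T W_s\,ds}+e^{-B\int_0^T W_s\,ds}$, it suffices to control the two one--sided exponential moments. I would fix a Lipschitz profile $\rho(\cdot)$ with $\rho(0)=\alpha$, $\rho(1)=\beta$ (a constant profile when $\theta=\gamma-1$, as prescribed by Remark \ref{rem_rev}) and take $\nu=\nu_{\rho(\cdot)}^N$. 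By the entropy inequality, Jensen's inequality, the Feynman--Kac formula (Lemma A.7.2 in \cite{KL}) and the entropy bound \eqref{H}, for every $B>0$,
\begin{equation*}
\mathbb{E}_{\mu_N}\!\left[\Big|\int_0^T W(\eta_s^N)\,ds\Big|\right]\le \frac{C_0N+\log 2}{B}+T\sup_{f}\left\{\Big|\int W f\,d\nu\Big|+\frac1B\big\langle \Theta(N)L_N\sqrt f,\sqrt f\big\rangle_{\nu}\right\},
\end{equation*}
the supremum being over densities $f$ with respect to $\nu$.

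The key algebraic step is the telescoping. Writing $\eta(1)-\eta(y)=\sum_{z=1}^{y-1}(\eta(z)-\eta(z+1))$ and exchanging the order of summation gives
\begin{equation*}
W=\frac1{\ell_0}\sum_{z=1}^{\ell_0-1}(\ell_0-z)\big(\eta(z)-\eta(z+1)\big),
\end{equation*}
so that $W$ is a weighted sum of \emph{nearest--neighbour} increments with weights in $[0,1]$. For each increment, the change of variables $\eta\mapsto\sigma^{z,z+1}\eta$ together with Young's inequality yield, for any $A>0$,
\begin{equation*}
\Big|\int (\eta(z)-\eta(z+1))f\,d\nu\Big|\le \frac{A}{2}\,I_{z,z+1}(\sqrt f,\nu)+\frac{C}{A},
\end{equation*}
the Radon--Nikodym factor $\theta^{z,z+1}-1=O(1/N)$ produced by the non-constant profile being negligible (it is handled exactly as the last term in \eqref{eq_ste1}). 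Choosing $A=c_\gamma\Theta(N)/(2B)$ uniformly in $z$ and using $p(1)=c_\gamma$, the $I$--terms add up to at most $\tfrac{\Theta(N)}{4B}D_N^0(\sqrt f,\nu)\le \tfrac{\Theta(N)}{4B}D_N(\sqrt f,\nu)$, while the errors add up to at most $CB\ell_0/\Theta(N)$.

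From Lemma \ref{bound_Dir} applied to the Lipschitz profile (estimate \eqref{lipschitz}, with its free constant chosen so as to clear the prefactor) one has $\langle\Theta(N)L_N\sqrt f,\sqrt f\rangle_{\nu}\le -\tfrac{\Theta(N)}4 D_N(\sqrt f,\nu)+C(\kappa N^{1-\theta}+N)$, and at $\theta=\gamma-1$ the same holds for a constant profile via \eqref{dir_est_const} with error $O(\kappa N)$. The negative Dirichlet form cancels the telescoping $I$--terms, leaving
\begin{equation*}
\mathbb{E}_{\mu_N}\!\left[\Big|\int_0^T W(\eta_s^N)\,ds\Big|\right]\le \frac{C\big(N+\kappa N^{1-\theta}\big)}{B}+\frac{CB\,\ell_0}{\Theta(N)}.
\end{equation*}
Inserting $\ell_0=\epsilon N^{\gamma-1}$ and $\Theta(N)=N^{\gamma}$ turns the last term into $CB\epsilon/N$; the choice $B=N\epsilon^{-1/2}$ then gives a bound of order $\sqrt\epsilon\,(1+\kappa N^{-\theta})$, which tends to $C\sqrt\epsilon$ as $N\to\infty$ (this is where $\theta\ge0$ is used) and then to $0$ as $\epsilon\to0$. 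The replacement of $\ell_0$ by the nearest integer (Remark \ref{remark}) does not affect these estimates, so this proves the lemma.

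The delicate point — and the reason I insist on a Lipschitz reference measure — is precisely the choice of $\nu$. Because of the long jumps the \emph{right} reservoir acts on the sites of the left box $\{1,\dots,\ell_0\}$ at a non-negligible rate: indeed $r_N^{+}(x/N)\to\tfrac12$ as $x/N\to0$. Consequently a constant profile $\rho\equiv\alpha$ is not admissible, for it would produce in Lemma \ref{bound_Dir} a reservoir error of order $\kappa N^{\gamma-\theta}$, and for $\theta<\gamma-1$ this term dominates $N$ and destroys the balance above (the optimisation would then give $\sqrt{\epsilon\,N^{\gamma-\theta-1}}$, which diverges). Using instead a Lipschitz profile that matches both boundary densities lowers this error to $O(N)$ through \eqref{lipschitz}, and it is exactly this gain that makes the optimisation close throughout $\theta\in[0,\gamma-1)$; at the endpoint $\theta=\gamma-1$ the constant profile already yields an admissible $O(\kappa N)$ error. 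I expect this interplay between the far-reservoir contribution and the admissible profile to be the only real obstacle, the rest being the standard entropy machinery.
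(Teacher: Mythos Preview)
Your proof is correct and follows essentially the same route as the paper: entropy/Feynman--Kac reduction, telescoping of $W$ into nearest-neighbour increments, Young's inequality to produce $D_N$, and a Lipschitz reference measure so that \eqref{lipschitz} controls the reservoir error. The only cosmetic differences are that the paper keeps the double sum $\tfrac{1}{\ell_0}\sum_{x}\sum_{z}$ rather than your exchanged form, uses the Lipschitz profile uniformly for all $\theta\in[0,\gamma-1]$ (it still gives an $O(B^{-1})$ error at the endpoint, so your switch to a constant profile there is unnecessary), and takes the three limits $N\to\infty$, $\epsilon\to0$, $B\to\infty$ in sequence instead of optimising $B=N\epsilon^{-1/2}$ as you do.
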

\begin{proof}
	Consider an arbitrary Lipschitz function $\rho:[0,1]\rightarrow [\alpha,\beta]$ such that $\rho (0)=\alpha, \rho (1)=\beta$ and define the product measure $\nu^N_{\rho(\cdot)}$ on $\Omega_N$ by \eqref{product}.
	 By entropy and Jensen's inequalities we bound the expectation in the statement, for any $B>0$, by
	\begin{equation}
	\frac{H(\mu_N|\nu^N_{\rho(\cdot)})}{BN}+\frac{1}{BN}\log \mathbb E_{\nu_{\rho(\cdot)}^N}\Bigg[e^{BN\big|\int_0^T(\eta^N_s(1)-\overrightarrow\eta_s^{\ell_0}(0))ds\big|}\Bigg].
	\end{equation}
	It is possible to remove the absolute value in the exponential above observing that $e^{|x|}\leq \max\{e^x, e^{-x}\}$.
	Now, using the entropy bound \eqref{H} and Feynman-Kac's formula, there exists a constant $C_0>0$ independent of $N$ such that the last expression can be bounded by above by 
	\begin{equation}
	\frac{C_0}{B}+\sup_{f}\Bigg\{\int(\eta(1)-\overrightarrow\eta^{\ell_0}(0)) f(\eta) d\nu^N_{\rho(\cdot)} +\dfrac{N^{\gamma-1}}{B}\langle L_N\sqrt{f},\sqrt{f}\rangle_{\nu^N_{\rho(\cdot)}} \Bigg\}
	\label{FK}
	\end{equation}
	where the supremum is carried on all the probability densities with respect to $\nu^N_{\rho(\cdot)}$. We use \eqref{lipschitz}  which, in this case, becomes (for a constant $C>0$ independent of $N$)
	\begin{equation}
	\dfrac{N^{\gamma-1}}{B}\langle L_N\sqrt{f},\sqrt{f}\rangle _{\nu^N_{\rho(\cdot)}} \leq -\dfrac{N^{\gamma-1}}{4B}D_N(\sqrt{f},\nu^N_{\rho(\cdot)}) + C\dfrac{\kappa N^{-\theta}+1}{B}.
	\label{eest}
	\end{equation}
	
	Observe  that the integral inside the supremum in \eqref{FK} can be rewritten, using the definition of $\overrightarrow\eta^{\ell_0}(0)$, as
	\begin{equation}
	\frac{1}{\ell_0}\int \sum_{x =1}^{\ell_0}(\eta(1)-\eta(x))f(\eta)d\nu^N_{\rho(\cdot)}=\frac{1}{\ell_0}\int \sum_{x =1}^{\ell_0}\sum_{z=1}^{x-1}(\eta(z)-\eta(z+1))f(\eta)d\nu^N_{\rho(\cdot)}
	\label{tel}
	\end{equation}
	where the equality follows from a simple telescopic argument. 
Now, by writing \eqref{tel} as twice its half and summing and subtracting appropriate terms, we can write it as  the sum of the following two terms:
	\begin{equation}
	\frac{1}{2{\ell_0}}\sum_{x =1}^{\ell_0}\sum_{z=1}^{x-1}\int (\eta(z)-\eta(z+1))(f(\eta)-f(\sigma^{z,z+1}\eta))d\nu^N_{\rho(\cdot)},
	\label{-}
	\end{equation}
	\begin{equation}
	\frac{1}{2{\ell_0}}\sum_{x =1}^{\ell_0}\sum_{z=1}^{x-1}\int (\eta(z)-\eta(z+1))(f(\eta)+f(\sigma^{z,z+1}\eta))d\nu^N_{\rho(\cdot)}.
	\label{+}
	\end{equation}
	Let us now focus on \eqref{-}, which is equal to
	\begin{equation}
	\frac{1}{2{\ell_0}}\int \sum_{x =1}^{\ell_0}\sum_{z=1}^{x-1}(\eta(z+1)-\eta(z))(\sqrt{f}(\eta)-\sqrt{f}(\sigma^{z,z+1}\eta))(\sqrt{f}(\eta)+\sqrt{f}(\sigma^{z,z+1}\eta))d\nu^N_{\rho(\cdot)} \ ,
	\end{equation}
	and from Young's inequality, it can be bounded from above, for any $A>0$, by the sum of 
	\begin{equation}
	\frac{1}{2A{\ell_0}}\int \sum_{x =1}^{\ell_0}\sum_{z=1}^{x-1}(\sqrt{f}(\eta)-\sqrt{f}(\sigma^{z,z+1}\eta))^2d\nu^N_{\rho(\cdot)}
	\label{D}
	\end{equation}
	and
	\begin{equation}
	\frac{A}{2{\ell_0}}\int \sum_{x =1}^{\ell_0}\sum_{z=1}^{x-1}(\eta(z)-\eta(z+1))^2(\sqrt{f}(\eta)+\sqrt{f}(\sigma^{z,z+1}\eta))^2d\nu^N_{\rho(\cdot)}.
	\label{D2}
	\end{equation}
	Observe now that if $D_{NN}(\sqrt{f},\nu^N_{\rho(\cdot)}):=\sum_{z=1}^{N-2}\int(\sqrt{f}(\eta)-\sqrt{f}(\sigma^{z,z+1}\eta))^2d\nu^N_{\rho(\cdot)}$ we have that  $D_{NN}(\sqrt{f},\nu^N_{\rho(\cdot)})\lesssim D_{N}(\sqrt{f},\nu^N_{\rho(\cdot)})$. This means that we can bound \eqref{D}  from above by a constant times
	\begin{equation*}
	\frac{1}{2A{\ell_0}}\sum_{x =1}^{\ell_0} D_{N}(\sqrt{f},d\nu^N_{\rho(\cdot)})\leq \frac{1}{2A}D_{N}(\sqrt{f},d\nu^N_{\rho(\cdot)}).
	\end{equation*}
	
	Let us now treat \eqref{D2}. Using  $(a+b)^2 \lesssim a^2+b^2$, changing variables ($\eta \mapsto \sigma^{z,z+1}\eta$) and remembering that $\nu^N_{\rho(\cdot)}$ is a product measure with $\rho(\cdot)$ Lipschitz profile, we can bound from above \eqref{D2} by a constant times
	\begin{equation*}
		\frac{A}{2{\ell_0}}\int \sum_{x =1}^{\ell_0}\sum_{z=1}^{x-1}(\eta(z)-\eta(z+1))^2{f}(\eta)d\nu^N_{\rho(\cdot)}.
	\end{equation*}
	Therefore, since $f$ is a density and the difference $(\eta(z)-\eta(z+1))^2\leq 1$, we can bound the previous display from above by a constant times
	\begin{equation*}
	\frac{A}{2{\ell_0}}\int \sum_{x =1}^{\ell_0}\sum_{z=1}^{x-1}(\eta(z)-\eta(z+1))^2{f}(\eta)d\nu^N_{\rho(\cdot)} \lesssim {A{\ell_0}}.
	\end{equation*}
We have thus obtained that
\begin{equation}
\label{eq:536}
 \eqref{-} \lesssim \frac{1}{2A}D_{N}(\sqrt{f},d\nu^N_{\rho(\cdot)}) + A\ell_0.
\end{equation}

	 In order to treat \eqref{+}, we introduce the configuration $\tilde{\eta}^z\in \Omega_N^{z,z+1}:=\{0,1\}^{\Lambda_{N}^{z,z+1}}$ where $\Lambda_N^{z,z+1}:=\Lambda_N \setminus \{z, z+1\}$. Using the notation $f(\eta)=f(\tilde{\eta},\eta(z),\eta(z+1))$ we can rewrite the sum of densities inside the integral in \eqref{+} as
	\begin{equation}
	\begin{split}
	\sum_{\tilde{\eta} \in \Omega_N^{z,z+1}}&\Big\{f(\tilde{\eta},0,1)+f(\tilde{\eta},1,0)\Big\}\Big[\rho(\tfrac{z}{N})(1-\rho(\tfrac{z+1}{N}))-\rho(\tfrac{z+1}{N})(1-\rho(\tfrac{z}{N}))\Big]\tilde{\nu}^N_{\rho(\cdot)}(\tilde{\eta}),
	\end{split}
	\label{renato}
	\end{equation}
	where $\tilde{\nu}^N_{\rho(\cdot)}$ is the marginal density associated to $\nu^N_{\rho(\cdot)}$ restricted to the configurations $\tilde{\eta} \in \Omega_N^{z,z+1}$.
	Then, since $\rho(\cdot)$ was chosen  to be Lipschitz continuous and $f$ is a density, then \eqref{renato} is of order $1/N$. Then we obtained that
	\begin{equation}
	\label{eq:537}
	\eqref{+} \lesssim \frac{1}{\ell_0}\sum_{x =1}^{\ell_0}\sum_{z=1}^{x-1} \frac{1}{N}\lesssim \frac{\ell_0}{N}.
	\end{equation}
	Hence \eqref{eest}, \eqref{eq:536} and \eqref{eq:537} give that \eqref{FK}  is bounded from above by 
	\begin{equation*}
	C\Big(\frac{1}{B}+\frac{1}{2A}D_{N}(\sqrt{f},d\nu^N_{\rho(\cdot)})+A\ell_0+\frac{\ell_0}{N}\Big)-\dfrac{N^{\gamma-1}}{4B}D_N(\sqrt{f},\nu^N_{\rho(\cdot)}) + C\dfrac{\kappa N^{-\theta}+1}{B}.
	\end{equation*}
where  $C$ is a constant depending on $\gamma, \alpha, \beta$. By choosing $A=2BN^{-\gamma+1}C^{-1}$, then, for the choice $\ell_0=\epsilon N^{\gamma-1}$, the last display is less or equal than a constant times $$\dfrac{1}{B} + B\epsilon + \epsilon N^{\gamma-2}+\dfrac{\kappa N^{-\theta}+1}{B}.$$ Since $\gamma \in (1,2)$ and $\theta\ge 0$, the previous display and thus \eqref{FK} vanishes as $N\rightarrow \infty$, then $\epsilon \to 0$ and finally $B\to\infty$. This concludes the proof of the lemma.
\end{proof}

Finally, the following lemma shows that we can approximate (in some sense) the empirical mean of the occupation variable with respect to the first $\ell_0$ sites by the one with respect to the first $\epsilon N$ sites. It is done using an iterative procedure, doubling the number of sites involved in the empirical mean at each step up to reach the final size $\epsilon N$.

\begin{lem}[Multi-scale argument]\label{lem:multi}
Consider the case 
$0\leq \theta \leq \gamma-1$ and $\gamma \in (1,2)$. Then, for any $T>0$,
	\begin{equation}
	\limsup_{\epsilon\rightarrow 0}\limsup_{N\rightarrow \infty}\mathbb{E}_{\mu_N}\Bigg[\Big|\int_0^T(\overrightarrow \eta^{\ell_0}_s(0)-\overrightarrow \eta_s^{\epsilon N}(0))ds\Big|\Bigg]=0.
	\label{bounderr}
	\end{equation}
	
	\begin{figure}[htb!]
		\begin{center}
			\begin{tikzpicture}

			\draw[shift={(-5.01,-0.15)}, color=black] (0pt,0pt) -- (0pt,0pt) ;

			\draw[-latex] (-5.3,0) -- (5.3,0) ;
			\draw[shift={(-4.5,0)}, color=black] (0pt,0pt) -- (0pt,-2pt) node[below]{\dots};
			\draw[shift={(-3.5,0)},color=black] (0pt,0pt) -- (0pt,-2pt) node[below]{$\ell_0$};
			\draw[shift={(-5,0)},color=black] (0pt,0pt) -- (0pt,-2pt) node[below]{$1$};
			\draw[shift={(-2,0)},color=black] (0pt,0pt) -- (0pt,-2pt) node[below]{$\ell_1$};
			\draw[shift={(1,0)},color=black] (0pt,0pt) -- (0pt,-2pt) node[below]{$\ell_2$};

			\node[draw=none] (R) at (0.5,-0.2) {};
			\node[draw=none] (B) at (-4,0.2) {};
			\node[draw=none] (M) at (-2.5,-0.2) {};
			\node[draw=none] (N) at (-2.5,0.2) {};

			\path [line width=0.5mm, ->] (M) edge[bend right =70, color=green]node[above] {}(R);	
			\path [line width= 0.5mm, <-] (N) edge[bend right =90, color=blue]node[above] {}(B);			
			
			\draw[, line width = 0.5 mm , draw=blue] (-5.2,0.54) rectangle (-3.35,-0.54);
			\draw[, line width = 0.5 mm , draw=green] (-5.25,0.59) rectangle (-1.8,-0.59);
			\draw[, line width = 0.5 mm , draw=red] (-5.3,0.64) rectangle (1.2,-0.64);

			\end{tikzpicture}
			\caption{Multi-scale argument: we double the box $\{ 1,\dots,\ell_0 \}$ a finite number of times $M$  until reaching  the final box of size $2^M\ell_0=\ell_M=\epsilon N$. Here we  illustrate the first two steps, until reaching $\ell_2=2^2\ell _0$. }	
		\end{center}	
	\end{figure}
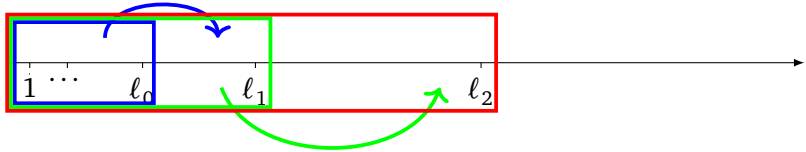
	
	\end{lem}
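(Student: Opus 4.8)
The plan is to combine the entropy--Feynman--Kac scheme already used for Lemma \ref{rep1} with a dyadic telescoping in the size of the box; the delicate point is that the box must be doubled $\sim\log N$ times, while the Dirichlet form budget provided by \eqref{lipschitz} may only be spent once. First I would set up the scales: let $\ell_k=2^k\ell_0=2^k\epsilon N^{\gamma-1}$ and let $M$ be the smallest integer with $\ell_M\ge\epsilon N$, so that $2^M\asymp N^{2-\gamma}$ and hence $M\asymp(2-\gamma)\log_2 N$ (the last, possibly incomplete, doubling and the rounding of $\ell_k$ to integers are harmless). Using $\overrightarrow\eta^{2\ell}(0)=\tfrac12(\overrightarrow\eta^{\ell}(0)+\overrightarrow\eta^{\ell}(\ell))$ together with \eqref{mean}, the integrand telescopes as
\begin{equation*}
\overrightarrow\eta^{\ell_0}_s(0)-\overrightarrow\eta^{\epsilon N}_s(0)=\sum_{k=0}^{M-1}\big(\overrightarrow\eta^{\ell_k}_s(0)-\overrightarrow\eta^{\ell_{k+1}}_s(0)\big)=\sum_{k=0}^{M-1}\tfrac12\big(\overrightarrow\eta^{\ell_k}_s(0)-\overrightarrow\eta^{\ell_k}_s(\ell_k)\big).
\end{equation*}

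The crucial observation is that I would apply the entropy inequality and Feynman--Kac's formula a \emph{single} time to the whole difference, not scale by scale, since the latter would produce a fatal factor $M\sim\log N$ in the entropy term $C_0/B$. Fixing a Lipschitz profile $\rho(\cdot)$ with $\rho(0)=\alpha$, $\rho(1)=\beta$ and using \eqref{H}, exactly as in \eqref{FK} the expectation in \eqref{bounderr} is bounded by
\begin{equation*}
\frac{C_0}{B}+\sup_f\Bigg\{\sum_{k=0}^{M-1}\frac12\int\big(\overrightarrow\eta^{\ell_k}(0)-\overrightarrow\eta^{\ell_k}(\ell_k)\big)f\,d\nu^N_{\rho(\cdot)}+\frac{N^{\gamma-1}}{B}\langle L_N\sqrt f,\sqrt f\rangle_{\nu^N_{\rho(\cdot)}}\Bigg\},
\end{equation*}
where by \eqref{lipschitz} the generator term is at most $-\tfrac{N^{\gamma-1}}{4B}D_N(\sqrt f,\nu^N_{\rho(\cdot)})+C\tfrac{\kappa N^{-\theta}+1}{B}$, leaving (for $\theta\ge0$) a negative multiple of $D_N$ as a budget to be shared among all scales.

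Next I would estimate each doubling step by moving particles directly between the two half-boxes $[1,\ell_k]$ and $[\ell_k+1,2\ell_k]$. Writing $\overrightarrow\eta^{\ell_k}(0)-\overrightarrow\eta^{\ell_k}(\ell_k)=\ell_k^{-2}\sum_{y,y'}(\eta(y)-\eta(y'))$ over pairs $y\in[1,\ell_k]$, $y'\in[\ell_k+1,2\ell_k]$ and splitting each bond into a ``difference'' and a ``sum'' part as in \eqref{-}--\eqref{+}, Young's inequality with a scale parameter $A_k$ yields a bound of the form
\begin{equation*}
\frac12\int\big(\overrightarrow\eta^{\ell_k}(0)-\overrightarrow\eta^{\ell_k}(\ell_k)\big)f\,d\nu^N_{\rho(\cdot)}\;\lesssim\;\frac{\ell_k^{\gamma-1}}{A_k}\,D_N(\sqrt f,\nu^N_{\rho(\cdot)})+A_k+\frac{\ell_k}{N},
\end{equation*}
where the Dirichlet coefficient comes from $\sum_{y,y'}I_{y,y'}\lesssim\ell_k^{\gamma+1}D_N^0$ (bounding $p(y'-y)^{-1}\lesssim\ell_k^{\gamma+1}$ and using $D_N^0\le D_N$), the term $A_k$ is the Young error (each bond contributes $\lesssim A_k$ because $f$ is a density), and $\ell_k/N$ is the Radon--Nikodym correction due to the Lipschitz profile, controlled exactly like the ``$+$'' term of Lemma \ref{rep1}. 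Carrying out this per-step estimate rigorously, while accounting for the many configurations generated by the long jumps, is precisely the content of the moving-particle Lemma \ref{posMPL} of Section \ref{MPL}, and is the main technical obstacle.

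Finally I would sum over scales and optimize the allocation of the budget. Absorbing all Dirichlet forms into $-\tfrac{N^{\gamma-1}}{4B}D_N$ requires $\sum_k\ell_k^{\gamma-1}/A_k\lesssim N^{\gamma-1}/B$; by Cauchy--Schwarz the resulting error $\sum_k A_k$ is minimized by choosing $A_k\propto\ell_k^{(\gamma-1)/2}$, giving
\begin{equation*}
\sum_{k=0}^{M-1}A_k\;\lesssim\;\frac{B}{N^{\gamma-1}}\Big(\sum_{k=0}^{M-1}\ell_k^{(\gamma-1)/2}\Big)^2\;\lesssim\;\frac{B}{N^{\gamma-1}}\,(\epsilon N)^{\gamma-1}=B\,\epsilon^{\gamma-1},
\end{equation*}
because, since $\gamma>1$, the geometric sum $\sum_k\ell_k^{(\gamma-1)/2}$ is dominated by its top term $\ell_M^{(\gamma-1)/2}=(\epsilon N)^{(\gamma-1)/2}$, so that \emph{no} factor $M\sim\log N$ survives; likewise $\sum_k\ell_k/N\lesssim\ell_M/N=\epsilon$. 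Collecting all contributions, \eqref{bounderr} is bounded by $C_0/B+C/B+CB\epsilon^{\gamma-1}+C\epsilon$ plus terms vanishing as $N\to\infty$; letting $N\to\infty$, then $\epsilon\to0$ (which kills $B\epsilon^{\gamma-1}$ and $\epsilon$ since $\gamma>1$), and finally $B\to\infty$, gives the claim. The proof of \eqref{prop2} is identical after the reflection $x\mapsto N-x$.
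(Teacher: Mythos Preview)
Your argument is correct and in fact takes a slightly more elementary route than the paper's. The overall architecture---a single application of entropy/Feynman--Kac to the full telescoped sum, Young's splitting at each scale, and the final bookkeeping yielding $C_0/B + CB\epsilon^{\gamma-1}+C\epsilon$---is exactly the paper's. The genuine difference lies in how you compare the two half-boxes at each scale. The paper pairs each $y\in\{1,\dots,\ell_{i-1}\}$ with its single translate $y+\ell_{i-1}$, i.e.\ uses only $\ell_{i-1}$ bonds per scale; after Young this produces $\sum_i\sum_y I_{y,y+\ell_{i-1}}/\ell_{i-1}^{\gamma}$, and bounding this sum by $D_N$ is precisely the content of the moving-particle Lemma~\ref{posMPL}, which requires the two-step path construction and the injectivity argument for $\Phi$. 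You instead expand $\overrightarrow\eta^{\ell_k}(0)-\overrightarrow\eta^{\ell_k}(\ell_k)$ as a double average over all $\ell_k^2$ pairs $(y,y')$ with $y\in[1,\ell_k]$, $y'\in[\ell_k+1,2\ell_k]$; for that choice the bound $\sum_{y,y'}I_{y,y'}\lesssim\ell_k^{\gamma+1}D_N^0$ is just the one-line observation $p(y'-y)^{-1}\lesssim\ell_k^{\gamma+1}$ together with $\sum_{y,y'}p(y'-y)I_{y,y'}\le D_N^0$ (these unordered pairs are distinct). So your sentence attributing this step to Lemma~\ref{posMPL} is off: with your all-pairs decomposition that lemma is \emph{not needed}; it is needed in the paper precisely because the single-shift pairing is too sparse for the naive bound to work. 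Your Cauchy--Schwarz allocation $A_k\propto\ell_k^{(\gamma-1)/2}$ is also a mild variant of the paper's direct choice $A_i\propto\ell_{i-1}^{\gamma-1}$, but both yield the same $B\epsilon^{\gamma-1}$ since the relevant geometric sums are dominated by their top term. In short: your proof is sound and actually self-contained; you may drop the reference to Lemma~\ref{posMPL}.
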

\begin{proof}
	    Consider a Bernoulli product measure $\nu_{\rho(\cdot)}^N$, as defined in \eqref{product}, with a Lipschitz profile $\rho:[0,1]\rightarrow (0,1)$ such that $\rho(0)=\alpha$ and $\rho(1)=\beta$.
		Now, proceeding exactly as we did in the beginning of the proof of Lemma \ref{rep1}, using entropy and Jensen's inequality and then Feynman-Kac's formula, we are reduced to prove that 
\begin{equation}
	\frac{C}{B}+\sup_f\Bigg\{\int (\overrightarrow \eta^{\ell_0}(0)-\overrightarrow \eta^{\epsilon N}(0))f(\eta)d\nu^N_{\rho(\cdot)}+\dfrac{N^{\gamma-1}}{B}\langle L_N\sqrt{f},\sqrt{f}\rangle_{\nu^N_{\rho(\cdot)}} \Bigg\},
	\label{FK2}
	\end{equation}
	vanishes in the triple limit $N\to \infty$, then $\epsilon \to0$ and finally $B\to \infty$. In the previous formula the supremum is carried over all the densities $f$ with respect to $\nu^N_{\rho(\cdot)}$.
	
Let us call $\ell_i:=2^i\ell_0$, and choose $M:=M_N$ such that $2^M\ell_0=\epsilon N$. Note that this $M$ is independent of $\epsilon$ and of order $\log N$ since $\ell_0=\epsilon N^{\gamma-1}$. With this notation we can write
	\begin{equation}
	\begin{split}
	\overrightarrow \eta^{\ell_0}(0)-\overrightarrow\eta^{\epsilon N}(0)=\sum_{i=1}^M\Big[\overrightarrow \eta^{\ell_{i-1}}(0)-\overrightarrow\eta^{\ell_i}(0)\Big]&=\sum_{i=1}^M\left[\frac{1}{\ell_{i-1}}\sum_{y=1}^{\ell_{i-1}}\eta(y)-\frac{1}{\ell_{i}}\sum_{y=1}^{\ell_{i}}\eta(y)\right]\\&=\sum_{i=1}^M\frac{1}{\ell_i}\sum_{y=1}^{\ell_{i-1}}\Big[\eta(y)-\eta(y+\ell_{i-1})\Big],
	\end{split}
	\label{path1}
	\end{equation}
	where the last equality comes from the definition of the $\ell_i$'s.
	
Thanks to the previous display, the first term in the supremum of \eqref{FK2} can be rewritten as the sum of
	\begin{equation}
	\frac{1}{2}\int \sum_{i=1}^M\frac{1}{\ell_i}\sum_{y=1}^{\ell_{i-1}}(\eta(y)-\eta(y+\ell_{i-1}))(f(\eta)-f(\sigma^{y,y+\ell_{i-1}}\eta))d\nu^N_{\rho(\cdot)}
	\label{--}
	\end{equation}
	and
	\begin{equation}
\frac{1}{2}	\int \sum_{i=1}^M\frac{1}{\ell_i}\sum_{y=1}^{\ell_{i-1}}(\eta(y)-\eta(y+\ell_{i-1}))(f(\eta)+f(\sigma^{y,y+\ell_{i-1}}\eta))d\nu^N_{\rho(\cdot)}.
	\label{++}
	\end{equation}

	Recall \eqref{Ixy}. Using Young's inequality as we did in Lemma \ref{rep1}, we bound from above \eqref{--} by the sum of
	\begin{equation}
	\begin{split}
	&\int \frac{1}{4}\sum_{i=1}^M\frac{1}{A_i\ell_i}\sum_{y=1}^{\ell_{i-1}}(\sqrt f(\eta)-\sqrt f(\sigma^{y,y+\ell_{i-1}}\eta))^2d\nu^N_{\rho(\cdot)}\\
	&= \frac{1}{4}\sum_{i=1}^M\frac{1}{A_i\ell_i}\sum_{y=1}^{\ell_{i-1}}I_{y,y+\ell_{i-1}}(\sqrt{f},\nu_{\rho(\cdot)}^N)
	\label{---}
	\end{split}
	\end{equation}
	and
	\begin{equation}
		\int \frac{1}{4}\sum_{i=1}^M\frac{A_i}{\ell_i}\sum_{y=1}^{\ell_{i-1}}(\eta(y)-\eta(y+\ell_{i-1}))^2(\sqrt f(\eta)+\sqrt f(\sigma^{y,y+\ell_{i-1}}\eta))^2d\nu^N_{\rho(\cdot)} \ ,
	\label{+++}
	\end{equation}
for some arbitrary positive $A_i$ that we are going to properly choose. Choosing $$A_i=A_0 \frac{B\ell_{i-1}^{\gamma}}{N^{\gamma-1}\ell_i}$$ with a suitable $A_0>0$ we can apply Lemma \ref{posMPL} to bound the right-hand side of \eqref{---} by {\footnote{We choose $A_0$ to be exactly the inverse of the constant which appear implicitly in Lemma \ref{posMPL}.  }} $$ \frac{N^{\gamma-1}}{4B}D_N(\sqrt{f},\nu_{\rho(\cdot)}^N).$$
	Let us estimate now \eqref{+++}. Since $f$ is a density and the $\eta(y)$'s are uniformly bounded, recalling that $2^M\ell_0=\epsilon N$, we can bound from above \eqref{+++} by a constant times $$\frac{B}{4N^{\gamma-1}}\sum_{i=1}^M\frac{\ell_{i-1}^{\gamma}}{\ell^2_i}\sum_{y=1}^{\ell_{i-1}}1=\frac{B}{4N^{\gamma-1}}\sum_{i=1}^M\frac{\ell_{i-1}^{\gamma+1}}{\ell^2_i}=\frac{B\ell_0^{\gamma-1}}{4N^{\gamma-1}}\frac{1}{2^{\gamma+1}}\sum_{i=1}^M 2^{i(\gamma-1)}\lesssim 
	B \epsilon^{\gamma-1}.$$

Hence we obtained that there exists a constant $C>0$ (independent of $N, \epsilon,B$) such that
\begin{equation}
\label{eq:5490}
\eqref{--} \leq  \frac{N^{\gamma-1}}{4B}D_N(\sqrt{f},\nu_{\rho(\cdot)}^N) +  C B \epsilon^{\gamma-1}.
\end{equation}

We still have to estimate \eqref{++}. Reasoning as we did in \eqref{renato}, we can rewrite the sum of the densities inside the integral in \eqref{++} as
	\begin{equation}
		\begin{split}
	\sum_{\tilde{\eta} \in \Omega_{N}^{{y},y+\ell_{i-1}}} &\Big\{f(\tilde{\eta},0,1)+f(\tilde{\eta},1,0)\Big\}\\
	& \times \Big[\rho(\tfrac{y}{N})(1-\rho(\tfrac{y+\ell_{i-1}}{N}))-\rho(\tfrac{y+\ell_{i-1}}{N})(1-\rho(\tfrac{y}{N}))\Big] \tilde{\nu}^N_{\rho(\cdot)} (\tilde\eta),
	\end{split}
	\end{equation}
	where now we denote $\tilde{\eta}$ as the configuration $\eta$ restricted to the sites in $\Lambda_N\backslash\{y, y+\ell_{i-1}\}$. So, the configuration $(\tilde{\eta},0,1)$ coincides with the configuration $\eta$ where $\eta(y+\ell_{i-1})=1$ and $\eta(y)=0$, and $\tilde{\nu}^N_{\rho(\cdot)}$ is the marginal density associated to $\nu^N_{\rho(\cdot)}$ restricted to the configurations $\tilde{\eta} \in \Omega_{N}^{y,y+\ell_{i-1}}:= \{0,1\}^{\Lambda_N\setminus \{y,y+\ell_{i-1}\}}$. 	Since $\rho(\cdot)$ was chosen to be Lipschitz continuous and $f$ is a density, the previous display is of order $\ell_{i-1}/N$. Therefore we get that
	\begin{equation}
	\label{eq:lastone}
\eqref{++} \lesssim \sum_{i=1}^M\frac{1}{\ell_i}\sum_{y=1}^{\ell_{i-1}}\frac{\ell_{i-1}}{N} \lesssim \frac{1}{N}\sum_{i=1}^M\frac{\ell_{i}^2}{\ell_i}\lesssim\frac{1}{N}\sum_{i=1}^M2^i\ell_0\lesssim 	\frac{2^M\ell_0}{N}=\epsilon
	\end{equation}
	since $M$ is such that $2^M\ell_0=\epsilon N$.

Hence, recalling \eqref{lipschitz}, \eqref{eq:5490} and \eqref{eq:lastone}, we get that the term inside the supremum in \eqref{FK2} can be bounded from above by a constant times 
$$  B\epsilon^{\gamma-1} + \epsilon + \dfrac{N^{\gamma} (\kappa + N^\theta)}{B N^{\gamma+\theta}}.$$
It follows that 
\begin{equation*}
\limsup_{B\to \infty}\limsup_{\epsilon \to 0}\limsup_{N\to \infty}\, \eqref{FK2} =0. 
\end{equation*}
This concludes the proof.
\end{proof}

\color{black}

In the next  lemma  we prove that, in the regime $\theta \in  [0, \gamma-1) $ and $\gamma \in (1,2)$, it is possible to replace, when integrated in time,  the occupation variable  $\eta_s^N(1)$ by $\alpha$ (and, analogously, $\eta_s^N(N-1)$ by $\beta$), as $N$ goes to infinity.

\color{black}

\begin{lem}
	For any $T>0$,   $0\leq\theta<\gamma-1$ and $\gamma \in (1,2)$ it holds:
	\begin{equation}
	\lim_{N \rightarrow \infty}\mathbb{E}_{\mu_N}\Bigg[\Big|\int_0^T(\eta^N_s(1)-\alpha)ds\Big|\Bigg]=0,
	\label{state}
	\end{equation}
	\begin{equation}
	\lim_{N \rightarrow \infty}\mathbb{E}_{\mu_N}\Bigg[\Big|\int_0^T(\eta^N_s(N-1)-\beta)ds\Big|\Bigg]=0,
	\label{state2}
	\end{equation}
	\label{repalpha}
\end{lem}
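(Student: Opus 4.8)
The plan is to run the entropy and Feynman--Kac scheme exactly as at the beginning of the proof of Lemma~\ref{rep1}, but now extracting the gain directly from the action of the left reservoir on the site $1$. Fix a Lipschitz profile $\rho:[0,1]\to[\alpha,\beta]$ with $\rho(0)=\alpha$, $\rho(1)=\beta$, and let $\nu^N_{\rho(\cdot)}$ be the product measure \eqref{product}. Since the functional $\eta\mapsto\eta(1)-\alpha$ is autonomous, the entropy inequality together with $e^{|x|}\le e^{x}+e^{-x}$, the bound \eqref{H} and the Feynman--Kac formula reduce \eqref{state} to showing that, for every fixed $B>0$,
\begin{equation*}
\frac{C_0}{B}+T\sup_{f}\Bigg\{\Big|\int(\eta(1)-\alpha)\,f\,d\nu^N_{\rho(\cdot)}\Big|+\frac{N^{\gamma-1}}{B}\big\langle L_N\sqrt f,\sqrt f\big\rangle_{\nu^N_{\rho(\cdot)}}\Bigg\}
\end{equation*}
vanishes in the limit $N\to\infty$ and then $B\to\infty$, the supremum being carried over all densities $f$ with respect to $\nu^N_{\rho(\cdot)}$ (recall $\Theta(N)=N^{\gamma}$ since $\theta\ge0$).

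The core step is to estimate statically the first term. Writing $\eta=(\tilde\eta,\eta(1))$ and using that $\nu^N_{\rho(\cdot)}$ is a product measure with $\rho(\tfrac1N)=\alpha+O(\tfrac1N)$, a direct computation gives
\begin{equation*}
\int(\eta(1)-\alpha)\,f\,d\nu^N_{\rho(\cdot)}=\alpha(1-\alpha)\sum_{\tilde\eta}\tilde\nu^N_{\rho(\cdot)}(\tilde\eta)\big(f(\tilde\eta,1)-f(\tilde\eta,0)\big)+O\big(\tfrac1N\big),
\end{equation*}
where $\tilde\nu^N_{\rho(\cdot)}$ is the marginal on $\Lambda_N\setminus\{1\}$, and the error comes from $\rho(\tfrac1N)-\alpha$ and is $O(1/N)$ because $f$ is a density. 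Factorising $f(\tilde\eta,1)-f(\tilde\eta,0)=(\sqrt{f(\tilde\eta,1)}-\sqrt{f(\tilde\eta,0)})(\sqrt{f(\tilde\eta,1)}+\sqrt{f(\tilde\eta,0)})$ and applying Young's inequality with parameter $A>0$, the sum is bounded, up to a multiplicative constant, by $\tfrac1A\,I^{\alpha}_{1}(\sqrt f,\nu^N_{\rho(\cdot)})+A$, with $I^\alpha_1$ as in \eqref{Ixy}; here the symmetric factor is estimated by an $O(1)$ quantity using once more that $f$ is a density. Since $r^-_N(\tfrac1N)=\sum_{y\ge1}p(y)$ is a strictly positive constant, \eqref{left_dir_form} gives $I^\alpha_1\le C\,D^l_N(\sqrt f,\nu^N_{\rho(\cdot)})$, and by the definition \eqref{D_false} and the nonnegativity of each piece of the Dirichlet form, $D^l_N\le \kappa^{-1}N^{\theta}D_N(\sqrt f,\nu^N_{\rho(\cdot)})$. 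Altogether,
\begin{equation*}
\Big|\int(\eta(1)-\alpha)\,f\,d\nu^N_{\rho(\cdot)}\Big|\lesssim \frac{N^{\theta}}{A\kappa}\,D_N(\sqrt f,\nu^N_{\rho(\cdot)})+A+\frac1N.
\end{equation*}

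It remains to match this against the Dirichlet form bound \eqref{lipschitz}, which for $\Theta(N)=N^\gamma$ reads $\tfrac{N^{\gamma-1}}{B}\langle L_N\sqrt f,\sqrt f\rangle_{\nu^N_{\rho(\cdot)}}\le -\tfrac{N^{\gamma-1}}{4B}D_N+\tfrac{C(\kappa+N^\theta)}{BN^{\theta}}$. Choosing $A=cB\kappa^{-1}N^{\theta-\gamma+1}$ with $c$ large enough, the positive $D_N$ contribution is absorbed by $-\tfrac{N^{\gamma-1}}{4B}D_N$, so the expression inside the supremum is bounded by a constant times $BN^{\theta-\gamma+1}+N^{-1}+\kappa B^{-1}N^{-\theta}+B^{-1}$. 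Letting $N\to\infty$ first and then $B\to\infty$ kills every term, the key point being that $\gamma>1$ and $\theta<\gamma-1$ force $N^{\theta-\gamma+1}\to0$; this is exactly where the hypothesis $\theta<\gamma-1$ enters (for $\theta=\gamma-1$ the term $BN^{\theta-\gamma+1}=B$ does not vanish, consistently with the appearance of Robin rather than Dirichlet conditions). This would prove \eqref{state}, and \eqref{state2} should follow verbatim by using the right reservoir, replacing $(\eta(1),\alpha,r^-_N,D^l_N)$ by $(\eta(N-1),\beta,r^+_N,D^r_N)$ and noting $r^+_N(\tfrac{N-1}{N})=\sum_{y\ge1}p(y)$.

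I expect the only genuinely delicate points to be bookkeeping ones: getting the algebraic reduction to $I^\alpha_1$ right (including the harmless $O(1/N)$ coming from the mismatch $\rho(\tfrac1N)\neq\alpha$), and performing the exponent matching so that the choice of $A$ simultaneously absorbs $D_N$ and leaves a vanishing remainder. Both are routine once the Dirichlet form inequality \eqref{lipschitz} and the bound $I^\alpha_1\le C\kappa^{-1}N^\theta D_N$ are in hand; in particular no new large-deviation or multiscale input is needed here, in contrast with Lemmas~\ref{rep1} and \ref{lem:multi}.
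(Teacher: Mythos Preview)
Your proposal is correct and follows essentially the same route as the paper's proof: entropy/Feynman--Kac reduction, a static estimate on $\int(\eta(1)-\alpha)f\,d\nu^N_{\rho(\cdot)}$ in terms of $I_1^\alpha(\sqrt f,\nu^N_{\rho(\cdot)})$ via Young's inequality, then the chain $I_1^\alpha\lesssim D_N^l\le \kappa^{-1}N^{\theta}D_N$ and absorption into the Dirichlet form coming from \eqref{lipschitz}, leaving a remainder of order $BN^{\theta-\gamma+1}$. The only cosmetic difference is that the paper obtains the static estimate by the $\sigma^1$-symmetrization $\tfrac12(f-f\circ\sigma^1)+\tfrac12(f+f\circ\sigma^1)$ instead of your direct marginal computation, and has the Young parameter $A$ playing the reciprocal role; the resulting bounds and the choice of the parameter coincide.
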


\begin{proof}
	We present the proof for the first expectation, but the proof of the other one is exactly the same. The first step in this proof is analogous to the beginning of the proof of Lemma \ref{rep1} therefore we omit some of the steps. We use Feynman-Kac's formula, entropy and Jensen's inequality to prove \eqref{state} it is sufficient to show that in the double limit ($N\to \infty$ and then $B\to \infty$) the term 
	\begin{equation}
	\cfrac{C}{B}+\sup_{f}\Bigg\{\int (\eta (1)-\alpha)f(\eta)d\nu^N_{\rho(\cdot)}+\cfrac{N^{\gamma-1}}{B}\langle L_N\sqrt f, \sqrt f\rangle _{\nu^N_{\rho(\cdot)}  } \Bigg\}
	\label{eq:oufa}
	\end{equation}
	vanishes. Here $C$ is a  constant independent from $N$ and the supremum is carried over all the densities with respect to  $\nu^N_{\rho(\cdot)}$ defined in \eqref{product}where $\rho (\cdot)$ is a Lipschitz continuous profile such that for all $u\in [0,1]$, $\alpha \leq \rho(u)\leq \beta$ and $\rho(0)=\alpha$, $\rho(1)=\beta$.  The first term inside the supremum above can be  rewritten as 
	\begin{equation}
	\label{fk}
	\cfrac{1}{2}\int (\eta(1)-\alpha)(f(\eta)-f(\sigma^1 \eta))d\nu_{\rho(\cdot)}+\cfrac{1}{2}\int (\eta(1)-\alpha)(f(\eta)+f(\sigma^1 \eta))d\nu^N_{\rho(\cdot)}.
	\end{equation}
	Recall \eqref{Ixy}. From Young's inequality and $(a-b)=(\sqrt a- \sqrt b) (\sqrt a + \sqrt b)$, we can bound the first term in the previous display from above by
	\begin{equation*}
	\cfrac{A}{2} I_1^{\alpha}(\sqrt f, \nu^N_{\rho(\cdot)})+
	\cfrac{1}{2A}\int (\eta(1)-\alpha)^2(\sqrt f(\eta)+\sqrt f(\sigma^1 \eta))^2d\nu^N_{\rho(\cdot)}
	\end{equation*}
	for any arbitrary positive constant $A$. Now  observe first that by \eqref{left_dir_form}, we have that $D_N^\ell(\sqrt f,\nu^N_{\rho(\cdot)})\leq r_1^-(\tfrac 1N)I_1^\alpha(\sqrt f,\nu^N_{\rho(\cdot)})$ and  secondly, that since $f$ is a density and $\rho(\cdot)$ is Lipschitz, the second term in the last display is bounded from above  by a constant times $1/N +1/A$ (see Lemma 5.5 of \cite{BGJO} for details). Hence we get that the first term in \eqref{fk} is bounded from above by  
	\begin{equation}
	\cfrac{A}{2r_1^-(\tfrac 1N)} D_N^l(\sqrt f, \nu^N_{\rho(\cdot)})+ C \left(\dfrac{1}{A} + {\dfrac{1}{N}} \right)
	\end{equation}
for some constant $C>0$ independent of $N,B, A$. Therefore, if we chose $$A=\tfrac{1}{2}\kappa B^{-1} N^{\gamma-\theta-1}r^-_1 (1/N)$$ we can bound the previous display by
	\begin{equation}
	\dfrac{\kappa N^{\gamma}}{4BN^{\theta+1}}D_N^l (\sqrt f, \nu^N_{\rho(\cdot)})+ C \left[\dfrac{2BN^{\theta+1}}{\kappa {N}^{\gamma} r^-_1(1/N)}+ \dfrac{1}{N} \right].
	\label{fk2}
	\end{equation}
	Since $\kappa N^{-\theta}D^l_N(\sqrt f, \nu^N_{\rho(\cdot)}) \leq D_N(\sqrt f, \nu^N_{\rho(\cdot)})$, by using \eqref{lipschitz}, we get that \eqref{eq:oufa} is bounded from above by a constant times 
	$$  \dfrac{1}{B} + \dfrac{2BN^{\theta+1}}{\kappa {N}^{\gamma} r^-_1(1/N)}+ \dfrac{1}{N} + \dfrac{N^{\gamma} (\kappa + N^\theta)}{B N^{\gamma+\theta}}.$$
Since $r_N^-(1/N)$ is bounded from bellow by a constant independent of $N$ and $\theta<\gamma-1$ the proof ends by choosing  $B=N^{\frac{\gamma-1-\theta}{2}}  $ .	
	\end{proof}

\begin{rem}
	\label{remark}
	The proof of \eqref{prop2} is completely analogous and it is a consequence of Lemma \ref{repalpha}, Lemmas \ref{rep1} and \ref{lem:multi}, when we replace $\alpha$ by $\beta$, $\eta^N_s(1)$ by $\eta^N_s(N-1)$ and $\overrightarrow\eta^{\ell}_s(0)$ by $\overleftarrow\eta^{\ell}_s(N)$.  The details are left to the reader.
\end{rem}

\subsection{Moving particle lemma}\label{MPL}
In this section we present a useful result to estimate the `cost' of a change of position of particles in our model. In other words, how much the exchange of position of a particle from a site $y \in \Lambda_N$ to a site $y+\ell\in \Lambda_N$, for some $\ell>0$, changes the `energy' of the system in terms of the quantity \eqref{D_false}.

\begin{lem} \label{posMPL}
	Fix $\ell_0 \leq N-1$ and define $\ell_i:=2^{i} \ell_0$ for any $i \in \{1,\dots, M\}$ where $M$ is a positive integer such that $\ell_M<N-1$. Let $f$ be a density with respect to $\nu_{\rho(\cdot)}^N$ on $\Omega_{N}$, where $\nu^N_{\rho(\cdot)}$ is defined as in \eqref{product} with $\rho:[0,1]\rightarrow (0,1)$ such that $\rho(0)=\alpha$ and $\rho(1)=\beta$. Then
	\begin{align*}
	\sum_{i=1}^M \sum_{y=1}^{\ell_{i-1}}  \cfrac{I_{y, y+\ell_{i-1}} \Big(\sqrt{f}, \nu_{\rho(\cdot)}^N \Big)}{\ell_{i-1}^{\gamma}} \lesssim   D_N \Big(\sqrt{f}, \nu_{\rho(\cdot)}^N \Big ) 
	\end{align*}
	where $D_N(\sqrt{f}, \nu_{\rho(\cdot)}^N  )$ is defined in \eqref{D_false} and $I_{y, y+\ell_{i-1}}$ in \eqref{Ixy} .
\end{lem}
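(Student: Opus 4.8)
The plan is to control each long exchange $I_{y,y+\ell_{i-1}}(\sqrt f,\nu^N_{\rho(\cdot)})$ by exchanges across the bonds that genuinely occur in $D_N^0$, and then to check that summing over the scales $i$ and the positions $y$ revisits each bond of $\Lambda_N$ with a total weight comparable to the weight $p(\cdot)$ it already carries in $D_N^0$. Writing $\nu:=\nu^N_{\rho(\cdot)}$ and recalling $p(z)=c_\gamma|z|^{-\gamma-1}$, the weight in the statement rewrites as $\ell_{i-1}^{-\gamma}=c_\gamma^{-1}\,\ell_{i-1}\,p(\ell_{i-1})$. This identity already reveals why a naive bound fails: although the single bond $\{y,y+\ell_{i-1}\}$ does occur in $D_N^0$ with weight $p(\ell_{i-1})$, replacing $I_{y,y+\ell_{i-1}}$ by $2p(\ell_{i-1})^{-1}D_N^0$ leaves an unwanted factor $\ell_{i-1}$, and after summing over the $\ell_{i-1}$ values of $y$ and over the scales one is off by a factor of order $\ell_M\sim\epsilon N$. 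The extra length must therefore be paid for by splitting each jump of length $\ell_{i-1}$ into $\sim\ell_{i-1}$ shorter jumps and averaging.

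First I would establish a one-step moving estimate: for distinct sites $a,w,b$, the permutation identity $\sigma^{a,b}=\sigma^{a,w}\circ\sigma^{w,b}\circ\sigma^{a,w}$ lets me telescope $\sqrt{f(\sigma^{a,b}\eta)}-\sqrt{f(\eta)}$ over three consecutive exchanges. Squaring, applying Cauchy--Schwarz, and performing the changes of variables $\eta\mapsto\sigma^{a,w}\eta$ and $\eta\mapsto\sigma^{w,b}\sigma^{a,w}\eta$ --- whose Radon--Nikodym factors $\theta^{\cdot,\cdot}$ with respect to $\nu$ are bounded above and below by constants depending only on $\alpha,\beta$, since $\rho$ takes values in $[\alpha,\beta]\subset(0,1)$ and only the (at most three) affected marginals enter --- I obtain $I_{a,b}(\sqrt f,\nu)\lesssim I_{a,w}(\sqrt f,\nu)+I_{w,b}(\sqrt f,\nu)$, uniformly in $a,w,b$. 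Applying this with $a=y$, $b=y+\ell_{i-1}$ and averaging over the $\ell_{i-1}-1$ interior sites $w\in W_{i,y}:=\{y+1,\dots,y+\ell_{i-1}-1\}$ (all lying in $\Lambda_N$, since $y+\ell_{i-1}\leq\ell_i<N-1$) gives
\[
\frac{1}{\ell_{i-1}^{\gamma}}\,I_{y,y+\ell_{i-1}}(\sqrt f,\nu)\;\lesssim\;\frac{1}{\ell_{i-1}^{\gamma+1}}\sum_{w\in W_{i,y}}\Big(I_{y,w}(\sqrt f,\nu)+I_{w,y+\ell_{i-1}}(\sqrt f,\nu)\Big).
\]
The degenerate case $\ell_{i-1}=1$ (possible only if $\ell_0=1$) is treated directly, since then $\{y,y+1\}$ is itself a bond of $D_N^0$ with weight $p(1)=c_\gamma$.

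It then remains to sum the right-hand side over $i$, $y$ and $w$ and to reorganize it as a weighted sum over bonds $(a,b)$ of $\Lambda_N$. For a fixed bond with $d:=b-a\geq1$, its total coefficient is $C(a,b)=\sum\ell_{i-1}^{-(\gamma+1)}$, the sum running over all triples $(i,y,w)$ producing $(a,b)$ either as $(y,w)$ or as $(w,y+\ell_{i-1})$. In both cases the constraint $w\in W_{i,y}$ forces $\ell_{i-1}>d$, so that
\[
C(a,b)\;\lesssim\;\sum_{i:\,\ell_{i-1}>d}\frac{1}{\ell_{i-1}^{\gamma+1}}\;\lesssim\;\frac{1}{d^{\gamma+1}}\;=\;\frac{p(b-a)}{c_\gamma},
\]
where the middle inequality is a convergent geometric series precisely because the lengths $\ell_{i-1}=2^{i-1}\ell_0$ are dyadic. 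Hence the full sum is bounded by $\sum_{a<b}p(b-a)\,I_{a,b}(\sqrt f,\nu)=D_N^0(\sqrt f,\nu)\leq D_N(\sqrt f,\nu)$, using $D_N=D_N^0+\kappa N^{-\theta}(D_N^l+D_N^r)\geq D_N^0$; this is the asserted inequality.

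The hard part is this last bond count: one must average (second step) rather than use the direct long bond, and then verify that the multiplicities sum correctly. The essential mechanism is that the dyadic progression of scales renders $\sum_{\ell_{i-1}>d}\ell_{i-1}^{-(\gamma+1)}$ a convergent geometric series of order $d^{-(\gamma+1)}$; a linear (non-dyadic) family of scales would instead contribute a spurious logarithmic factor and destroy the estimate. The bounded Radon--Nikodym factors of the inhomogeneous product measure $\nu$, which rely on $\alpha,\beta\in(0,1)$, are what make the three-point decomposition quantitative with an $N$-independent constant.
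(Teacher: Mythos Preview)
Your argument is correct. The overall skeleton is the same as the paper's --- split the long exchange $\{y,y+\ell_{i-1}\}$ through $\sim\ell_{i-1}$ intermediate points, absorb the factor $\ell_{i-1}$ in the averaging, and then control the resulting double sum by $D_N^0$ --- but the two implementations differ in two places. First, for the one-step estimate $I_{a,b}\lesssim I_{a,w}+I_{w,b}$ you use the universal transposition identity $\sigma^{a,b}=\sigma^{a,w}\sigma^{w,b}\sigma^{a,w}$ (three swaps, configuration-independent), whereas the paper uses a two-swap decomposition that depends on the values of $\eta$ at the three sites and requires a case analysis on $\tilde\Omega_N^1,\tilde\Omega_N^2$. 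Second, and more substantially, you average over all interior points $w\in\{y+1,\dots,y+\ell_{i-1}-1\}$ and then control the total weight on each bond $(a,b)$ by the geometric series $\sum_{i:\ell_{i-1}>d}\ell_{i-1}^{-(\gamma+1)}\lesssim d^{-(\gamma+1)}$; the paper instead restricts the intermediate points to the upper half $\{y+\tfrac{\ell_{i-1}}{2}+1,\dots,y+\ell_{i-1}\}$, so that all bond lengths occurring at scale $i$ lie in $(\ell_{i-1}/2,\ell_{i-1}]$, and then proves that the map $(i,y,j,k)\mapsto(z_{k-1,j},z_{k,j})$ is genuinely injective. The paper's route gives the sharper structural statement that each bond is used at most once, at the price of the four-case injectivity check; your route trades this for a bounded-multiplicity argument that is shorter and avoids parity considerations on $\ell_0$, while still exploiting the dyadic scales in exactly the place you identify.
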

\begin{proof}
We can assume without loss of generality that $\ell_0$ is even (the argument is easy to extend to an odd $\ell_0$) and then $\ell_{i-1}$ is an even number for any $i \in \{1, \ldots,M\}$. 

Fix $i \in \{1, \ldots, M\}$. For every $y \in \{1, \ldots, \ell_{i-1}\}$ consider the $\frac{\ell_{i-1}}{2}$ possibilities that a particle has to jump from $y$ to $y + \ell_{i-1}$ in at most two steps. Hence for any $j \in \{1, \ldots, \frac{\ell_{i-1}}{2}\}$, define 
\begin{equation}
\label{eq:zdefinition}
z_{0,j}:=z_{0,j}^i (y)=y,\quad z_{1,j}:=z_{1,j}^i (y) = y + \frac{\ell_{i-1}}{2}+j\quad z_{2,j}:=z_{2,j}^i (y)=y + \ell_{i-1} ,
\end{equation}
which correspond to one jump of length $\frac{\ell_{i-1}}{2}+j$ from $z_{0,j}=y$ to $z_{1,j}$  and one jump of length $\frac{\ell_{i-1}}{2}-j$ from $z_{1,j}$ to $z_{2,j}=y+\ell_{i-1}$. Observe that for $j=\frac{\ell_{i-1}}{2}$, we are moving from $y$ to $y+\ell_{i-1}$ with only one jump and $z_{1,j} = z_{2,j}$. 
%
%
%

Recall \eqref{Ixy}. Observe that
\begin{equation}\label{problem_cedric}
\sqrt{f \left( \sigma^{y,y+\ell_{i-1}}\eta \right) } - \sqrt{f \left( \eta \right) } = \sqrt{f \left( \sigma^{z_{0,j},z_{2,j}}\eta \right) } - \sqrt{f \left( \eta \right) }  
\end{equation}
is not $0$ if, and only if, $\eta(z_{0,j})\neq \eta(z_{2,j})$. We want to rewrite \eqref{problem_cedric} using the point $z_{1,j}$. To do that we consider separately the possible combinations of values of the $z_{i,j}$ with $i\in \{0,1,2\}$. First, assume that $\eta(z_{0,j})=1$ and $\eta(z_{2,j})=0$. In this case we have two possibilities:
\begin{enumerate}
	\item[a.] when $\eta(z_{1,j})=0$, we observe that the action of the operator $\sigma^{z_{0,j},z_{2,j}}$ on the configuration $\eta$ is equivalent to the action in sequence (and in that order) of $\sigma^{z_{0,j},z_{1,j}}$ and then  $\sigma^{z_{1,j},z_{2,j}}$. So, in this particular case we can write \eqref{problem_cedric} as \begin{equation}\label{list1} \Big[\sqrt{f ( \sigma^{z_{1,j},z_{2,j}}( \sigma^{z_{0,j},z_{1,j}}\eta )) } - \sqrt{f \left( \sigma^{z_{0,j},z_{1,j}}\eta \right) } \Big] + \Big[\sqrt{f (  \sigma^{z_{0,j},z_{1,j}}\eta ) } - \sqrt{f ( \eta ) } \Big].\end{equation}
	\item[b.] when $\eta(z_{1,j})=1$, the action of the operator $\sigma^{z_{0,j},z_{2,j}}$ on the configuration $\eta$ is equivalent to the action in sequence (and in that order) of $\sigma^{z_{1,j},z_{2,j}}$ and then  $\sigma^{z_{0,j},z_{1,j}}$. So, in this particular case we can write \eqref{problem_cedric} as \begin{equation}\label{list2}\Big[\sqrt{f ( \sigma^{z_{0,j},z_{1,j}}( \sigma^{z_{1,j},z_{2,j}}\eta )) } - \sqrt{f \left( \sigma^{z_{1,j},z_{2,j}}\eta \right) } \Big] + \Big[\sqrt{f (  \sigma^{z_{1,j},z_{2,j}}\eta ) } - \sqrt{f ( \eta ) } \Big].\end{equation}
\end{enumerate} 
Then, we have to consider the case in which $\eta(z_{1,j})=0$ and $\eta(z_{2,j})=1$. Then, reasoning similarly to what we did above, if  $\eta(z_{1,j})=0$ we can rewrite \eqref{problem_cedric} as \eqref{list2}, otherwise, if $\eta(z_{1,j})=1$ we can rewrite \eqref{problem_cedric} as \eqref{list1}.

Let us now consider the configurations in which we can rewrite \eqref{problem_cedric} as \eqref{list1} and call the set of these configurations 
\begin{equation*}
\begin{split}
\tilde \Omega_N^1 = \{ \eta \in \Omega_N: &\eta(z_{0,j})=1,\eta(z_{1,j})=0,\eta(z_{2,j})=0 \\
&\text{ or } \eta(z_{0,j})=0,\eta(z_{1,j})=1,\eta(z_{2,j})=1\}.
\end{split}
\end{equation*}
In a similar way let us define the following set of configurations 
\begin{equation*}
\begin{split}
\tilde \Omega_N^2= \{\eta \in \Omega_N: &\eta(z_{0,j})=1,\eta(z_{1,j})=1,\eta(z_{2,j})=0\\
& \text{ or } \eta(z_{0,j})=0,\eta(z_{1,j})=0,\eta(z_{2,j})=1\},
\end{split}
\end{equation*}
which are the ones corresponding to the case in which we can rewrite \eqref{problem_cedric} as \eqref{list2}. Observe that $\tilde \Omega_N^1\cap \tilde \Omega_N^2 = \emptyset$. Now, thanks to the reasoning that we did above and using the inequality $(a+b)^2 \le 2(a^2+b^2)$, we can write
\begin{equation}
\begin{split}
&I_{y,y+\ell_{i-1}}  (\sqrt{f}, \nu_{\rho(\cdot)}^N )= \int (\sqrt{f \left( \eta^{y,y+\ell_{i-1}} \right) } - \sqrt{f \left( \eta \right) } )^2 d \nu_{\rho(\cdot)}^N  \\
&= \int_{\tilde \Omega_N^1} \Big( \Big[\sqrt{f \left( \sigma^{z_{1,j},z_{2,j}}( \sigma^{z_{0,j},z_{1,j}}\eta \right)) } - \sqrt{f \left( \sigma^{z_{0,j},z_{1,j}}\eta \right) } \Big] \\
&\hspace{4cm} + \Big[\sqrt{f (  \sigma^{z_{0,j},z_{1,j}}\eta ) } - \sqrt{f ( \eta ) } \Big] \Big)^2 d\nu_{\rho(\cdot)}^N \\ 
& \; \; +\int_{\tilde \Omega_N^2} \Big( \Big[\sqrt{f \left( \sigma^{z_{0,j},z_{1,j}}( \sigma^{z_{1,j},z_{2,j}}\eta \right)) } - \sqrt{f \left( \sigma^{z_{1,j},z_{2,j}}\eta \right) } \Big] \\
&\hspace{4cm}+ \Big[\sqrt{f (  \sigma^{z_{1,j},z_{2,j}}\eta ) } - \sqrt{f ( \eta ) } \Big] \Big)^2 d\nu_{\rho(\cdot)}^N \\
& \lesssim  \int_{\tilde \Omega_N^1}  \left(\sqrt{f \left( \sigma^{z_{1,j},z_{2,j}}( \sigma^{z_{0,j},z_{1,j}}\eta \right)) } - \sqrt{f \left( \sigma^{z_{0,j},z_{1,j}}\eta \right) }\right)^2 d\nu_{\rho(\cdot)}^N\\
&\quad\quad  + \int_{\tilde \Omega_N^1}\left(\sqrt{f (  \sigma^{z_{0,j},z_{1,j}}\eta ) } - \sqrt{f ( \eta ) } \right)^2 d\nu_{\rho(\cdot)}^N  \\
 &+\int_{\tilde \Omega_N^2} \left(\sqrt{f \left( \sigma^{z_{0,j},z_{1,j}}( \sigma^{z_{1,j},z_{2,j}}\eta \right)) } - \sqrt{f \left( \sigma^{z_{1,j},z_{2,j}}\eta \right) }\right)^2d\nu_{\rho(\cdot)}^N \\
 &\quad\quad  + \int_{\tilde \Omega_N^2} \left(\sqrt{f (  \sigma^{z_{1,j},z_{2,j}}\eta ) } - \sqrt{f ( \eta ) } \right)^2 d\nu_{\rho(\cdot)}^N.
 \end{split}
 \end{equation}
 Note that $ \int_{\tilde \Omega_N^i} \big(\sqrt{f (  \sigma^{x,y}\eta ) } - \sqrt{f ( \eta ) } \big)^2 d\nu_{\rho(\cdot)}^N \leq I_{x,y} \big(\sqrt{f},\nu_{\rho(\cdot)}^N\big)$, for any $i=1,2$ and any $x,y \in \Omega_N$. Moreover, by the properties of $\nu_{\rho(\cdot)}^N$ (product measure associated to a Lipschitz profile which is never $0$ nor $1$), the  Radon-Nikodym derivatives of the form $\tfrac{d\nu_{\rho(\cdot)}^N(\sigma^{x,y}\eta)}{d\nu_{\rho(\cdot)}^N(\eta)} $, for any $x,y \in \Lambda_N$, are bounded by a positive constant independent of $x$, $y$ and $N$. Hence, last display can be bounded from above by a constant times
 \begin{equation}
I_{z_{1,j},z_{2,j}}  (\sqrt{f}, \nu_{\rho(\cdot)}^N)+I_{z_{0,j},z_{1,j}}  (\sqrt{f}, \nu_{\rho(\cdot)}^N )+ I_{z_{0,j},z_{1,j}}  (\sqrt{f}, \nu_{\rho(\cdot)}^N )+ I_{z_{1,j},z_{2,j}}  (\sqrt{f}, \nu_{\rho(\cdot)}^N).
 \end{equation}
 Therefore, we obtained that 
 \begin{equation}
 I_{y,y+\ell_{i-1}}  \Big(\sqrt{f}, \nu_{\rho(\cdot)}^N \Big) \; \lesssim \; \sum_{k=1}^2  I_{z_{k-1,j},z_{k,j}}  \Big(\sqrt{f}, \nu_{\rho(\cdot)}^N \Big).
 \end{equation}

Observe now that by construction we have $[p (z_{k,j} - z_{k-1,j})]^{-1}  \lesssim \ell_{i-1}^{1+\gamma}$ (the longest possible jump is equal to $\ell_{i-1}$). Hence, we have
\begin{equation}
\begin{split}
 I_{y,y+\ell_{i-1}}  (\sqrt{f}, \nu_{\rho(\cdot)}^N ) & \lesssim\sum_{k=1}^2 I_{z_{k-1,j},z_{k,j}}  (\sqrt{f}, \nu_{\rho(\cdot)}^N)\\
 &\lesssim \ell_{i-1}^{1+\gamma} \sum_{k=1}^2   p (z_{k,j} - z_{k-1,j})I_{z_{k-1,j},z_{k,j}}  (\sqrt{f}, \nu_{\rho(\cdot)}^N).
\end{split}
\end{equation}
This is true for any $j \in \{1,\dots, \tfrac{\ell_{i-1}}{2}\}$, so we can write
\begin{equation}
\label{65}
\ell_{i-1}I_{y,y+\ell_{i-1}}  (\sqrt{f}, \nu_{\rho(\cdot)}^N )	\lesssim  \ell_{i-1}^{1+\gamma}  \sum_{j=1}^{\ell_{i-1}/2} \sum_{k=1}^2   p (z_{k,j} - z_{k-1,j})I_{z_{k-1,j},z_{k,j}}  (\sqrt{f}, \nu_{\rho(\cdot)}^N ),
\end{equation}
which implies that
\begin{equation}
\label{Iprev}
\sum_{i=1}^M \sum_{y=1}^{\ell_{i-1}}  \frac{I_{y,y+\ell_{i-1}}(\sqrt{f}, \nu_{\rho(\cdot)}^N )}{\ell_{i-1}^{\gamma}} \lesssim  \sum_{i=1}^M \sum_{y=1}^{\ell_{i-1}} \sum_{j=1}^{\ell_{i-1}/2} \sum_{k=1}^2   p (z_{k,j} - z_{k-1,j})I_{z_{k-1,j},z_{k,j}}  (\sqrt{f}, \nu_{\rho(\cdot)}^N ).
\end{equation}
Recall \eqref{eq:zdefinition} and, in particular, that the $z_{k,j}$'s depend in fact on $i$ and $y$. We claim that when $i,y,j,k$ describe the sets involved in last sum, the couples $(z_{k-1,j} , z_{k,j}):=(z_{k-1, j}^i (y), z_{k,j}^i (y) )$ are all different, i.e.
\begin{equation}
\label{eq:phi}
\Phi: (i,y,j,k) \to (z_{k-1, j}^i (y), z_{k,j}^i (y) ) \in \Lambda_N \times \Lambda_N \quad \text{is injective.}
\end{equation}
Therefore, recalling \eqref{D_false}, we can bound from above the term on the right-hand side of \eqref{Iprev} by
\begin{equation}\label{64}
\underset{v\leq w}{ \sum_{v,w \in \Lambda_N}} p(w-v) I_{v,w} (\sqrt{f}, \nu_{\rho(\cdot)}^N ) \lesssim D_N (\sqrt{f},\nu_{\rho(\cdot)}^N ).
\end{equation}
Putting together \eqref{64} and \eqref{Iprev} we get the statement.

We still have to prove \eqref{eq:phi} to conclude the proof. Let us assume that
\begin{equation*}
\Phi (i,y,j,k) = \Phi (i',y',j',k')
\end{equation*}
and let us prove that $(i,y,j,k)=(i',y',j',k')$. We distinguish four cases according to the values of $k$ and $k'$.

\begin{itemize}
\item $k=k'=1$: then $z_{0,j}^i (y)=z_{0,j'}^{i'} (y')$ and $z_{1,j}^i (y)=z_{1,j'}^{i'} (y')$ imply that 
$$y=y', \quad \tfrac{\ell_{i-1}}{2} +j = \tfrac{\ell_{i'-1}}{2} +j' .$$ 
Since $1 \le j \le \ell_{i-1} /2$ and $1 \le j' \le \ell_{i'-1}/2$ we have that
\begin{equation}
\label{eq:k11}
1 + \tfrac{\ell_{i-1}}{2} \le \tfrac{\ell_{i-1}}{2} +j \le \ell_{i-1}\quad \text{and}\quad 1 + \tfrac{\ell_{i'-1}}{2} \le \tfrac{\ell_{i'-1}}{2} +j' \le \ell_{i'-1}.
\end{equation}
If $i\le i' -1$ then $\ell_{i-1} \le \tfrac{\ell_{i' -1}}{2}<  \tfrac{\ell_{i' -1}}{2} +1$ and the equality $\tfrac{\ell_{i-1}}{2} +j = \tfrac{\ell_{i'-1}}{2} +j'$ is then in contradiction with \eqref{eq:k11}. If $i'\le i -1$ then $\ell_{i'-1} \le \tfrac{\ell_{i -1}}{2}<  1+ \tfrac{\ell_{i -1}}{2}$ the equality $\tfrac{\ell_{i-1}}{2} +j = \tfrac{\ell_{i'-1}}{2} +j'$ is then again in contradiction with \eqref{eq:k11}. Hence $i=i'$ and consequently $j=j'$. We are done.\\

\item $k=1$ and $k'=2$: then $z_{0,j}^i (y)=z_{1,j'}^{i'} (y')$ and $z_{1,j}^i (y)=z_{2,j'}^{i'} (y')$ imply that 
\begin{equation*}
y= y'+ \tfrac{\ell_{i'-1}}{2} +j', \quad  y+ \tfrac{\ell_{i-1}}{2} +j =y'+ \ell_{i'-1} ,
\end{equation*}
and hence by replacing $y$ in the second equality by $ y'+ \tfrac{\ell_{i'-1}}{2} +j'$, we get
\begin{equation*}
y= y'+ \tfrac{\ell_{i'-1}}{2} +j', \quad  \tfrac{\ell_{i-1}}{2} +j =\tfrac{\ell_{i'-1}}{2} -j'. 
\end{equation*}
Since $1\le y \le \ell_{i-1}$, $1\le y'\le \ell_{i'-1}$, $1 \le j \le \ell_{i-1} /2$ and $1 \le j' \le \ell_{i'-1}/2$ we have that
\begin{equation}
\label{eq:k12}
\begin{split}
& 1\le y \le \ell_{i-1} \quad \text{and} \quad 2 + \tfrac{\ell_{i'-1}}{2} \le y \le 2 \ell_{i'-1} , \\
& 1 + \tfrac{\ell_{i-1}}{2} \le \tfrac{\ell_{i-1}}{2} +j \le \ell_{i-1} \quad \text{and} \quad 0 \le  \tfrac{\ell_{i-1}}{2} +j \le \tfrac{\ell_{i'-1}}{2} -1 .
\end{split}
\end{equation}
If $i\le i'-1$ then $\ell_{i-1} \le \tfrac{\ell_{i' -1}}{2}<2 + \tfrac{\ell_{i'-1}}{2}$ and there is a contradiction with the  first line of \eqref{eq:k12}. If $i'\le i$ then $\ell_{i'-1} \le {\ell_{i -1}}$, hence $\tfrac{\ell_{i'-1}}{2} -1 < 1 + \tfrac{\ell_{i-1}}{2}$, which is in contradiction with the second line of \eqref{eq:k12}. Hence this case is not possible and we are done.\\

 \item $k=2$ and $k'=1$: by symmetry this case is equivalent to the previous one.\\

 \item $k'=2$ and $k=2$:  then $z_{1,j}^i (y)=z_{1,j'}^{i'} (y')$ and $z_{2,j}^i (y)=z_{2,j'}^{i'} (y')$ imply that 
$$\tfrac{\ell_{i-1}}{2} +j +y  = \tfrac{\ell_{i'-1}}{2} +j' +y', \quad {\ell_{i-1}}+y  = {\ell_{i'-1}} +y' .$$ 
The second equality and the fact that $1 \le y \le \ell_{i-1}$, resp. $1\le y' \le \ell_{i'-1}$ implies that
\begin{equation}
\label{eq:k22}
1+\ell_{i-1} \le y +\ell_{i-1} \le 2\ell_{i-1} \quad \text{and}\quad 1+\ell_{i'-1} \le y +\ell_{i-1} \le 2\ell_{i'-1}.
\end{equation}
If $i\le i'-1$ then $2\ell_{i-1} \le {\ell_{i' -1}}< {\ell_{i' -1}}+1 $ which is in contradiction with \eqref{eq:k22}. Similarly if $i'\le i-1$ then $2\ell_{i'-1} \le {\ell_{i -1}}< {\ell_{i -1}}+1 $ is in contradiction with \eqref{eq:k22}. Hence $i=i'$ and consequently we deduce that $y=y'$ and $j=j'$.\\
\end{itemize}

This concludes the proof of the lemma.

\end{proof}

\subsection{Auxiliary result }
The next proposition is used in order to define a notion of weak solution in the case $\theta=\gamma-1$ and $\gamma \in (1,2)$. The strategy of the proof is similar to the one used in the previous lemmas, so we will just sketch its proof.

\begin{lem}\label{replacementrobin}
	Fix $\theta=\gamma-1$, $\gamma \in (1,2)$. For any $T>0$, for any $G \in C^{\infty}([0,1])$ it holds 	\begin{equation*}
	\limsup_{\epsilon \rightarrow 0}\limsup_{N \rightarrow \infty}\mathbb{E}_{\mu_N}\Bigg[\Big|\int_0^T\sum_{x\in \Lambda_N}G(\tfrac{x}{N})r^{-}_N(\tfrac{x}{N})(\eta^N_s(x)-\overrightarrow\eta_s^{\ell_0}(0))ds\Big|\Bigg]=0,
	\end{equation*}
	\label{rep2}
	\begin{equation*}
	\limsup_{\epsilon \rightarrow 0}\limsup_{N \rightarrow \infty}\mathbb{E}_{\mu_N}\Bigg[\Big|\int_0^T\sum_{x\in \Lambda_N}G(\tfrac{x}{N})r^{+}_N(\tfrac{x}{N})(\eta^N_s(x)-\overleftarrow\eta_s^{\ell_0}(N))ds\Big|\Bigg]=0,
	\end{equation*}
where $\ell_0=\epsilon N^{\gamma-1}$.
\end{lem}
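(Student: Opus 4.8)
The plan is to prove the first display in detail; the second one follows by the mirror-image argument in which $\alpha,\eta^N_s(1),\overrightarrow\eta^{\ell_0}_s(0)$ are replaced by $\beta,\eta^N_s(N-1),\overleftarrow\eta^{\ell_0}_s(N)$, exactly as in Remark \ref{remark}. Writing $W_x:=G(\tfrac xN)r^-_N(\tfrac xN)$ and recalling that, for $\gamma\in(1,2)$, one has $r^-_N(\tfrac xN)\lesssim x^{-\gamma}$ for every $x\ge 1$ and $\sum_{x\in\Lambda_N}r^-_N(\tfrac xN)\to m<\infty$, I would first split the sum over $x$ into the near region $x\le\ell_0$ and the far region $x>\ell_0$ (each inside its own absolute value). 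The far region is handled \emph{deterministically}: since $|\eta^N_s(x)-\overrightarrow\eta^{\ell_0}_s(0)|\le 1$ and $\sum_{x>\ell_0}r^-_N(\tfrac xN)\lesssim \ell_0^{1-\gamma}=\epsilon^{1-\gamma}N^{-(\gamma-1)^2}$, the corresponding expectation is bounded by $T\|G\|_\infty\,\epsilon^{1-\gamma}N^{-(\gamma-1)^2}$, which vanishes as $N\to\infty$ because $(\gamma-1)^2>0$.

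For the near region I would run the same entropy/Feynman--Kac scheme as in Lemma \ref{rep1}. Testing against a product measure $\nu^N_{\rho(\cdot)}$ of the form \eqref{product} with a Lipschitz profile $\rho$ satisfying $\rho(0)=\alpha$, $\rho(1)=\beta$, the entropy bound \eqref{H}, Jensen's inequality and Feynman--Kac's formula (with $\Theta(N)=N^\gamma$ since $\theta=\gamma-1\ge 0$) reduce the claim to showing that
\[
\frac{C}{B}+\sup_f\Big\{\sum_{x=1}^{\ell_0}W_x\!\int(\eta(x)-\overrightarrow\eta^{\ell_0}(0))\,f\,d\nu^N_{\rho(\cdot)}+\frac{N^{\gamma-1}}{B}\langle L_N\sqrt f,\sqrt f\rangle_{\nu^N_{\rho(\cdot)}}\Big\}
\]
vanishes when $N\to\infty$, then $\epsilon\to0$, then $B\to\infty$, the supremum being over densities $f$ with respect to $\nu^N_{\rho(\cdot)}$. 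Using $\eta(x)-\overrightarrow\eta^{\ell_0}(0)=\tfrac1{\ell_0}\sum_{w=1}^{\ell_0}(\eta(x)-\eta(w))$ and telescoping each difference along nearest-neighbour bonds inside $\{1,\dots,\ell_0\}$, I would split the integral, exactly as in \eqref{-}--\eqref{+}, into an antisymmetric part and a symmetric part.

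The estimates then follow the template of Lemma \ref{rep1}. By Young's inequality with a parameter $A$, the antisymmetric part is bounded by a Dirichlet contribution $\tfrac1A\big(\sum_x W_x\big)D_{NN}(\sqrt f,\nu^N_{\rho(\cdot)})\lesssim\tfrac1A D_N(\sqrt f,\nu^N_{\rho(\cdot)})$ --- here I crucially use $D_{NN}\lesssim D_N$ (with $D_{NN}$ the nearest-neighbour form introduced in the proof of Lemma \ref{rep1}) together with the boundedness $\sum_x W_x=O(1)$ --- plus a Young error $\lesssim A\big(\sum_x W_x\big)\ell_0\lesssim A\ell_0$. The symmetric part is controlled through the Lipschitz regularity of $\rho$ and the product structure of $\nu^N_{\rho(\cdot)}$ exactly as in \eqref{renato}, giving a quantity of order $\big(\sum_x W_x\big)\ell_0/N\lesssim \ell_0/N=\epsilon N^{\gamma-2}$. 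Choosing $A$ of order $B/N^{\gamma-1}$ so that the Dirichlet term is absorbed by the $-\tfrac{N^{\gamma-1}}{4B}D_N$ produced by \eqref{lipschitz}, and recalling $\ell_0=\epsilon N^{\gamma-1}$ so that $A\ell_0$ is of order $B\epsilon$, the whole expression is bounded by a constant times $\tfrac1B+B\epsilon+\epsilon N^{\gamma-2}+\kappa N^{-\theta}B^{-1}$, which vanishes in the stated order of limits since $\gamma<2$ and $\theta=\gamma-1>0$.

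The main obstacle is the careful bookkeeping of the weight $W_x=G(\tfrac xN)r^-_N(\tfrac xN)$: one must exploit \emph{both} that $r^-_N(\tfrac xN)$ decays like $x^{-\gamma}$, so that the far region $x>\ell_0$ contributes $\ell_0^{1-\gamma}\to0$, \emph{and} that $\sum_x r^-_N(\tfrac xN)$ stays bounded, so that the weighted Dirichlet form collapses to $O(1)\,D_N$ instead of blowing up. The scale $\ell_0=\epsilon N^{\gamma-1}$ is precisely the one for which the Young error $A\ell_0$ and the far-field error $\ell_0^{1-\gamma}$ are simultaneously controllable. Alternatively, one could dispense with the near/far split and handle the long bonds directly through the Moving Particle Lemma \ref{posMPL}, but keeping all exchanges nearest-neighbour inside the box is cleaner and self-contained.
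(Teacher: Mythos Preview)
Your proposal is correct and follows essentially the same strategy as the paper: the near/far split at $x=\ell_0$, the deterministic bound on the far region via $r^-_N(\tfrac xN)\lesssim x^{-\gamma}$, and the entropy/Feynman--Kac argument with nearest-neighbour telescoping on the near region, culminating in the same choice $A\sim B N^{1-\gamma}$ and final bound $O(B^{-1})+O(B\epsilon)$.

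The only difference is cosmetic: the paper takes the reference measure $\nu^N_\rho$ with a \emph{constant} profile $\rho\in(0,1)$ (as suggested by Remark~\ref{rem_rev} for $\theta\ge\gamma-1$), which makes the measure invariant under $\sigma^{z,z+1}$ and so the symmetric part \eqref{+} vanishes identically. Your choice of a Lipschitz profile instead produces the extra error $\epsilon N^{\gamma-2}$ from that symmetric part; since $\gamma<2$ this is harmless, and the Dirichlet--form error from \eqref{lipschitz} is still $O(B^{-1})$ at $\theta=\gamma-1$, so nothing is lost. The constant-profile choice is slightly cleaner but your version is equally valid.
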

\begin{proof} 
	We prove only the part involving $r^-_N$, since the other one can be proved analogously. By the triangular inequality we can bound the expectation in the statement by the sum of
	\begin{equation}\label{real_statement}
		\mathbb{E}_{\mu_N}\Bigg[\Big|\int_0^T\sum_{\substack{x\in \Lambda_N\\x\leq \ell_0}}G(\tfrac{x}{N})r^{-}_N(\tfrac{x}{N})(\eta^N_s(x)-\overrightarrow\eta_s^{\ell_0}(0))ds\Big|\Bigg]
	\end{equation}
	and 
	\begin{equation}
	\mathbb{E}_{\mu_N}\Bigg[\Big|\int_0^T\sum_{\substack{x\in \Lambda_N\\x> \ell_0}}G(\tfrac{x}{N})r^{-}_N(\tfrac{x}{N})(\eta^N_s(x)-\overrightarrow\eta_s^{\ell_0}(0))ds\Big|\Bigg].
	\end{equation}
	The last display can be bounded from above by a constant times 
	\begin{equation*}
\sum_{\substack{x\in \Lambda_N\\x> \ell_0}}|G(\tfrac{x}{N})||r^{-}_N(\tfrac{x}{N})|\leq ||G||_{\infty} \sum_{\substack{x\in \Lambda_N\\x> \ell_0}}|r^{-}_N(\tfrac{x}{N})|.
	\end{equation*}
	Since $r^-_N(\tfrac{x}{N})\lesssim x^{-\gamma}$ the right-hand side of the last display can be bounded from above, using Riemann integral approximation, by a constant times   $$(\ell_0)^{-\gamma+1}=\epsilon^{-\gamma+1}(N^{\gamma-1})^{-\gamma+1}=\epsilon^{-\gamma+1} N^{-(\gamma-1)^2}, $$  and it vanishes in the limit $N\to \infty, \epsilon \to 0$. This means that in order to prove the statement of the lemma we only need to show that the limit \eqref{real_statement} is equal to zero.

	We proceed as we did in the proof of Lemma \ref{rep1}. Let $\rho \in (0,1)$ be a constant. By Jensen's and entropy inequalities plus Feynman-Kac's formula, we are reduced to show that
		\begin{equation}
	\frac{C_0}{B}+\sup_{f}\Bigg\{\int\sum_{\substack{x\in \Lambda_N\\x\leq \ell_0}}G(\tfrac{x}{N})r^{-}_N(\tfrac{x}{N})(\eta (x)-\overrightarrow\eta^{\ell_0}(0))f(\eta)d\nu_{\rho}^N+\dfrac{N^{\gamma-1}}{B} \langle L_N\sqrt{f},\sqrt{f}\rangle_{\nu^N_{\rho}} \Bigg\}
	\label{FKPat}
	\end{equation}
	vanishes in the limit $N\to \infty$, $\epsilon \to 0$ and then $B\to \infty$. As usual $C_0$ is a  constant independent from $N$ and the supremum is carried over all the densities with respect to  the Bernoulli product measure $\nu^N_{\rho}$. From \eqref{dir_est_const}  and recalling that $\theta=\gamma-1$, the term on the right-hand side of last display becomes  bounded from above by
	\begin{equation}\label{eq:dir_bound}
		 -\dfrac{N^\gamma}{4NB}D_{N}(\sqrt{f},\nu^N_{\rho}) +\frac{C\kappa}{B}
		\end{equation}
		where $C$ is a constant independent of $N$, $B$ (and $\epsilon$). The first term inside the supremum of \eqref{FKPat}  can be written  as
		\begin{equation*}
\frac{1}{\ell_0}\int\sum_{\substack{x\in \Lambda_N\\x\leq \ell_0}}G(\tfrac{x}{N})r^{-}_N(\tfrac{x}{N})\Big[\sum_{y=1}^{x}\sum_{z=y}^{x-1}(\eta(z+1)-\eta(z))+\sum_{y=x}^{\ell_0}\sum_{z=x}^{y-1}(\eta(z)-\eta(z+1))\Big]f(\eta)d\nu_{\rho}^N.
	\end{equation*}
	Now, since the profile in the product measure $\nu_\rho^N$ is a constant we can rewrite the last term, by a change of variable as
	\begin{equation}
	\begin{split}
	\frac{1}{2\ell_0}\int\sum_{\substack{x\in \Lambda_N\\x\leq \ell_0}}G(\tfrac{x}{N})r^{-}_N(\tfrac{x}{N})&\Big[\sum_{y=1}^{x}\sum_{z=y}^{x-1}(\eta(z+1)-\eta(z))(f(\eta)-f(\sigma^{z,z+1}\eta))\\&+\sum_{y=x}^{\ell_0}\sum_{z=x}^{y-1}(\eta(z)-\eta(z+1))(f(\eta)-f(\sigma^{z,z+1}\eta))\Big]d\nu_{\rho}^N.
	\end{split}
	\end{equation}
	Hence, using Young's inequality on both the terms in the last display with the same constant $A>0$ we can bound from above the whole term by
	\begin{equation}
	\begin{split}
		\frac{1}{4\ell_0}\int\sum_{\substack{x\in \Lambda_N\\x\leq \ell_0}} \left\vert G(\tfrac{x}{N})r^{-}_N(\tfrac{x}{N}) \right\vert &\Big[A\sum_{y=1}^{x}\sum_{z=y}^{x-1}(\eta(z+1)-\eta(z))^2(\sqrt{f}(\eta)+\sqrt{f}(\sigma^{z,z+1}\eta))^2\\&+A\sum_{y=x}^{\ell_0}\sum_{z=x}^{y-1}(\eta(z)-\eta(z+1))^2(\sqrt{f}(\eta)+\sqrt{f}(\sigma^{z,z+1}\eta))^2\\&+\frac{1}{A}\sum_{y=1}^{\ell_0}\sum_{z=1}^{\ell_0}(\sqrt{f}(\eta)-\sqrt{f}(\sigma^{z,z+1}\eta))^2\Big]d\nu_{\rho}^N.
	\end{split}
	\end{equation}

	Since $f$ is a density and $|\eta(z)|\leq 1$ for any $z$, we can bound from above the whole last term by (recall the definition \eqref{Ixy})
	\begin{equation}
	\label{eq:finalou}
	\begin{split}
	\frac{1}{4\ell_0} & \int \sum_{\substack{x\in \Lambda_N\\x\leq \ell_0}} \left\vert G(\tfrac{x}{N})r^{-}_N(\tfrac{x}{N}) \right\vert\left[A\sum_{y=1}^{\ell_0}\sum_{z=1}^{\ell_0} 2+\frac{1}{A}\sum_{y=1}^{\ell_0}\sum_{z=1}^{\ell_0}I_{z,z+1}(\sqrt{f},\nu_{\rho}^N)\right]\\
	&\leq \frac{C' A\ell_0}{2}+\frac{1}{4A}D_{N}(\sqrt{f},\nu^N_{\rho}),
	\end{split}
	\end{equation} 
	where $C'$ is a constant independent of $N, \epsilon, B$ (this because $\sum_{\substack{x\in \Lambda_N\\x\leq \ell_0}} \vert G(\tfrac{x}{N})r^{-}_N(\tfrac{x}{N})  \vert \lesssim 1$ and $\sum_{z=1}^{\ell_0}I_{z,z+1}(\sqrt{f},\nu_{\rho}^N)\leq D_{N}(\sqrt{f},\nu^N_{\rho})$).
	
	Then, taking $A=BN^{1-\gamma}$ the second term on the right-hand side of \eqref{eq:finalou} cancels with the first of \eqref{eq:dir_bound}. Summarising we bounded \eqref{FKPat} and hence the expectation in \eqref{real_statement} by
	\begin{equation}
	\frac{C \kappa}{B}+ C' \frac{B N^{1-\gamma}\ell_0}{2}\lesssim \frac{1}{B}+B\epsilon.
	\end{equation}
	So, sending $\epsilon$ to $0$ and then $B$ to infinity, the proof is concluded.
\end{proof}

\begin{rem}
	\label{robin_rem} Observe that if we apply Lemma \ref{replacementrobin} and then Lemma \ref{lem:multi} it is easy to show that 
	\begin{equation*}
	\limsup_{\epsilon \rightarrow 0}\limsup_{N \rightarrow \infty}\mathbb{E}_{\mu_N}\Bigg[\Big|\int_0^T\sum_{x\in \Lambda_N}G(\tfrac{x}{N})r^{-}_N(\tfrac{x}{N})(\eta^N_s(x)-\overrightarrow\eta_s^{\epsilon N}(0))ds\Big|\Bigg]=0.
	\end{equation*}
	Indeed the expectation above can be bounded from above by
	\begin{equation}
	\begin{split}
	&\mathbb{E}_{\mu_N}\Bigg[\Big|\int_0^T\sum_{x\in \Lambda_N}G(\tfrac{x}{N})r^{-}_N(\tfrac{x}{N})(\eta^N_s(x)-\overrightarrow\eta_s^{\ell_0}(0))ds\Big|\Bigg]\\
	&+\mathbb{E}_{\mu_N}\Bigg[\Big|\int_0^T\sum_{x\in \Lambda_N}G(\tfrac{x}{N})r^{-}_N(\tfrac{x}{N})(\overrightarrow\eta^{\ell_0}_s(0)-\overrightarrow\eta_s^{\epsilon N}(0))ds\Big|\Bigg] .
	\end{split}
	\end{equation}
	The first expectation above vanishes, as $N$ goes to infinity and $\epsilon$ goes to $0$, thanks to Lemma \ref{replacementrobin}. Now, call $C_N:=\sum_{x\in \Lambda_N}G(\tfrac{x}{N})r^{-}_N(\tfrac{x}{N})$, which is of order $1$ since the sum is convergent. Then,  the second expectation above vanishes, as $N$ goes to infinity and $\epsilon$ goes to $0$, thanks to Lemma \ref{lem:multi}.
	Moreover, note that by the symmetry property of this model, the results that we used works also when we study the right boundary (see Remark \ref{remark}). So, it is possible to show in the same way as above that
	\begin{equation*}
	\limsup_{\epsilon \rightarrow 0}\limsup_{N \rightarrow \infty}\mathbb{E}_{\mu_N}\Bigg[\Big|\int_0^T\sum_{x\in \Lambda_N}G(\tfrac{x}{N})r^{+}_N(\tfrac{x}{N})(\eta^N_s(x)-\overleftarrow\eta_s^{\epsilon N}(N))ds\Big|\Bigg]=0.
	\end{equation*}
\end{rem}
\color{black}

\vspace{1cm}
\thanks{ {\bf{Acknowledgements:}} This project has received funding from the European Research Council (ERC) under  the European Union's Horizon 2020 research and innovative programme (grant agreement   n. 715734). This work has been supported by the project LSD ANR-15-CE40-0020-01 of the French National Research Agency (ANR).  P.C. thanks FCT/Portugal for financial support through the project Lisbon Mathematics PhD (LisMath). P.G., S.S. and P.C. thank  FCT/Portugal for support through CAMGSD, IST-ID, projects UIDB/04459/2020 and UIDP/04459/2020. }


\nocite{*}
\bibliography{biblio}
\bibliographystyle{plain}

\Addresses
\end{document}